\newtheorem{thm}{Theorem}[section]
\newtheorem{lemma}[thm]{Lemma}
\theoremstyle{definition}
\newtheorem{remark}[thm]{Remark}
\newtheorem{hypothesis}[thm]{Hypothesis}
\newtheorem{example}[thm]{Example}
\renewcommand\leq{\leqslant} 
\renewcommand\geq{\geqslant}
\DeclareMathOperator{\Out}{Out}
\DeclareMathOperator{\Inn}{Inn}
\DeclareMathOperator{\Aut}{Aut}
\DeclareMathOperator{\fix}{fix}
\DeclareMathOperator{\core}{core}
\DeclareMathOperator{\soc}{soc}
\DeclareMathOperator{\twr}{twr}
\DeclareMathOperator{\Sym}{Sym}
\DeclareMathOperator{\GL}{GL}
\DeclareMathOperator{\SL}{SL}
\DeclareMathOperator{\PSL}{PSL}
\DeclareMathOperator{\Sp}{Sp}
\title{Bases of twisted wreath products}
\author{Joanna B. Fawcett}
\dedicatory{Dedicated to the memory of my doctoral supervisor Jan Saxl}
\address[Fawcett]{Department of Mathematics,  Imperial College London, South Kensington Campus, 
London, SW7 2AZ, United Kingdom}
\email{j.fawcett@imperial.ac.uk}
\keywords{permutation group, base size, twisted wreath product, primitive, quasiprimitive}
\thanks{The author thanks the anonymous reviewers for their  valuable comments.   
}
\subjclass[2010]{20B15, 20B05}
\begin{document}

\maketitle

\begin{abstract}
We study the base sizes of finite quasiprimitive permutation groups of \textit{twisted wreath type}, which are precisely the finite  permutation groups  with a unique  minimal normal subgroup that is also non-abelian,    non-simple and regular. Every permutation  group of twisted wreath type is permutation isomorphic to a twisted wreath product $G=T^k{:}P$ acting on its base group $\Omega=T^k$, where $T$ is some non-abelian simple group and $P$ is some group acting  transitively on  $\bm{k}=\{1,\ldots,k\}$ with $k\geq 2$. 
We prove that if  $G$ is primitive on $\Omega$ and $P$ is  quasiprimitive on~$\bm{k}$, then $G$ has base size $2$. We also  prove that the proportion of pairs of points  that are bases for $G$ tends to 1 as $|G|\to \infty$ when $G$ is primitive on $\Omega$ and $P$ is primitive on~$\bm{k}$. Lastly, we determine  the base size of any quasiprimitive group of twisted wreath type  up to four possible values (and three in the primitive case). In particular, we demonstrate that there are many families of primitive groups of twisted wreath type with arbitrarily large base sizes. 
\end{abstract}

\section{Introduction}
\label{s:intro}

Bases are a fundamental tool in permutation group theory and are used extensively in computational group theory (see~\cite{Ser2003}). For a  permutation group $G$ on a set $\Omega$, a \textit{base}  is a  subset of $\Omega$ whose pointwise stabiliser in $G$ is trivial. The \textit{base size} of $G$, denoted by $b_\Omega(G)$ or $b(G)$, is the  minimal cardinality of a base for $G$. It is immediate from the definition of a base that  $|G|\leq |\Omega|^{b(G)}$, so $b(G)$ was first studied in order to  bound the order of a finite primitive permutation group (e.g.,~\cite{Boc1889,Bab1981}). Increasingly, however, attention has focused on the parameter $b(G)$ itself, and with the advent of the classification of the finite simple groups (CFSG), there have been significant advances in our understanding of the base sizes of finite primitive groups,   several of which are outlined below. 

This paper concerns bases of twisted wreath products. By the O'Nan-Scott theorem (see~\cite{LiePraSax1988} or~\S\ref{s:prelim}), any finite primitive permutation group with a unique non-abelian regular minimal normal subgroup is  permutation isomorphic to a twisted wreath product  with a prescribed action (see~\S\ref{s:tw} for precise definitions).  This  class of primitive groups was not well understood until Baddeley's definitive work~\cite{Bad1993} on the subject, and even now it remains somewhat mysterious. 

For our purposes, a  twisted wreath product $G$ is some semidirect product $B{:}P$ with  $B\simeq T^k$ such that $T$ is a finite non-abelian simple group and $P$ is a finite group acting transitively  on $\bm{k}:=\{1,\ldots,k\}$ where $k\geq 2$. The base group $B$ is the unique minimal normal subgroup of $G$, so  $B=\soc(G)$, the socle of $G$. Further, $G$ acts faithfully and transitively on $B$, with $B$ acting regularly by right multiplication, and $P$ acting by conjugation. In particular, $G$ is  quasiprimitive. (A permutation group is 
 \textit{quasiprimitive} if every non-trivial normal subgroup is transitive. For example, every primitive permutation group is quasiprimitive.)
There are two main issues when working with twisted wreath products. The first is that given some $T$ and $P$ as above, it can be difficult to determine whether a twisted wreath product $G$ exists. The second is that given some twisted wreath product $G$, the conditions on $T$ and $P$ for  $G$ to be primitive are quite subtle; necessarily, the action of $P$ on~$\bm{k}$ is faithful, and $T$ is a composition factor of $P_1$, the stabiliser  in $P$ of $1\in \bm{k}$. Both issues are addressed in~\cite{Bad1993}. 
 
One of the key advances in our understanding of base sizes concerns   almost simple groups. Let $\mathcal{C}$ denote the class of finite almost simple primitive permutation groups, but exclude the \textit{standard} actions, which, roughly speaking, are those  arising  from the alternating group $A_n$ on a set of subsets or partitions of $\bm{n}$, or a classical group on a set of subspaces of the natural module (see~\cite[Definition~1.1]{Bur2007}). The base size of a standard permutation group can be arbitrarily large. 
However, Cameron  conjectured~\cite{Cam1992} that there is an absolute constant $c$ such that $b(G)\leq c$ for all $G\in\mathcal{C}$. This conjecture was proved by Liebeck and Shalev~\cite{LieSha1999}, and, remarkably, it was later established that $c=7$ is best possible~\cite{Bur2007,BurLieSha2009,BurObrWil2010,BurGurSax2011}. 
 Jan Saxl then initiated an  ambitious project to classify the primitive permutation groups with base size $2$.   Much progress has been made on this problem for primitive groups of almost simple type~\cite{BurGurSax2011,JamJ2006,BurObrWil2010,BurGurSax2014,JamJ20062,MulNeuWil2007,Bur2021}, diagonal type~\cite{Faw2013} and affine type~\cite{Goo2000,KohPah2001,FawObrSax2016,FawMulObrWil2019,Lee2021,Lee2021-2}.  Our first main result establishes  that a large class of primitive twisted wreath products have base size $2$.

First we require some terminology.   
A finite quasiprimitive permutation group $G$ is of \textit{twisted wreath type} if $G$ has a unique non-abelian non-simple regular minimal normal subgroup (in which case $G$ is permutation isomorphic to a twisted wreath product by~\cite{Pra1993}). The \textit{top group}  of $G$ is the permutation group induced by the conjugation action of $G$ on the simple factors of $\soc(G)$. If $G$ is a twisted wreath product $B{:}P$ as described above, then $G$ has top group $P^{\bm{k}}$, and if $G$ is primitive on $B$, then $P$ is faithful on $\bm{k}$ (see~\S\ref{s:tw}).

\begin{thm}
\label{thm:QP} 
Let $G$ be a primitive permutation group of twisted wreath type. If the top group of $G$ is quasiprimitive, then $b(G)=2$.
\end{thm}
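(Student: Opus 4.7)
The plan is to reduce the base-size problem to finding a regular orbit for $P$ on the base group, and then to estimate the proportion of non-regular orbits via a fixed-point count that leverages both the twisted wreath action and quasiprimitivity.

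Since $B=T^k$ acts regularly on $\Omega$, I take $1_B$ as the first point of a candidate base; its stabiliser in $G$ is $P$. Hence $b(G)=2$ if and only if $P$ has a regular orbit on $B$ under the twisted wreath action, and it suffices to prove the union bound
\[
\sum_{1\neq p\in P} |\fix_B(p)| < |B|.
\]
Quasiprimitivity of $P$ on $\bm{k}$ forces $P$ to be faithful on $\bm{k}$, so each $1\neq p\in P$ induces a non-identity permutation $\bar p\in\Sym(\bm{k})$. A direct calculation with the twisted wreath action shows that, for a cycle $(i_1,\ldots,i_m)$ of $\bar p$, the values $f(i_1),\ldots,f(i_m)$ of a fixed $f$ are determined by $f(i_1)$, which must itself be fixed by the composite automorphism $\alpha\in\Aut(T)$ obtained from the $\varphi$-values along the cycle. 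Consequently $|\fix_B(p)|=\prod_{j}|\fix_T(\alpha_j)|\leq |T|^{c(\bar p)}$, where the product runs over cycles of $\bar p$ and $c(\bar p)$ is their number.

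Grouping elements of $P$ by the permutation $\bar p$ (each appearing once by faithfulness) and using $c(\bar p)\leq k-d(\bar p)/2$ where $d(\bar p)$ is the number of moved points, the required inequality reduces to showing $\sum_{\bar p\neq 1} |T|^{-d(\bar p)/2}\cdot(\text{automorphism corrections})<1$. The two structural inputs that power this estimate are: (i) quasiprimitivity of $P$ on $\bm{k}$, which through the classification of finite quasiprimitive permutation groups controls $|P|$ and the distribution of minimal degree, so that elements of small support are scarce; and (ii) Baddeley's primitivity criterion, which forces $\Inn(T)\leq\varphi(P_1)$, ensuring that the cycle automorphisms $\alpha_j$ are non-trivial for many $p$ and therefore contribute $|\fix_T(\alpha_j)|\lesssim |T|^{1/2}$ rather than $|T|$, using standard centraliser bounds for automorphisms of non-abelian finite simple groups.

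The principal obstacle is uniformity: when $|T|$ is near its minimum (as small as $60$) and $|P|$ is large relative to $|T|$, the naive bound $|\fix_B(p)|\leq|T|^{c(\bar p)}$ alone is insufficient, and the genuine $|T|^{-1/2}$ automorphism savings must be brought in on most cycles. I would address this by splitting on the O'Nan--Scott type of a transitive minimal normal subgroup $M$ of $P$ on $\bm{k}$: when $M$ is elementary abelian, $M$ is regular on $\bm{k}$ and $|M|=k$, which tightly constrains the structure of $P$; when $M$ is non-abelian, Praeger's classification supplies the necessary control on $|P|$ and the admissible cycle structures. In each case the combination of cycle-count bounds with centraliser estimates on $T$ should drive the sum strictly below $1$, producing the desired regular orbit of $P$ on $B$.
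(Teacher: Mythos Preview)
Your reduction to a regular orbit of $P$ on $B$ and the cycle-by-cycle estimate $|\fix_B(p)|\leq |T|^{c(\bar p)}$ are both correct (the latter is Lemma~\ref{lemma:fix}). The genuine gap is in the ``automorphism savings'' step. The bound $|\fix_T(\alpha_j)|\lesssim |T|^{1/2}$ is false in general: for an inner automorphism induced by $t\in T$ one has $|\fix_T(\alpha)|=|C_T(t)|$, and centralisers in simple groups can vastly exceed $|T|^{1/2}$ (e.g.\ a $3$-cycle in $A_n$). Moreover, the primitivity condition $\Inn(T)\leq\varphi(Q)$ constrains only the global image $\varphi(Q)$; it does not force the composite cycle automorphism $\alpha_j$ attached to any particular cycle of any particular $p$ to be non-trivial. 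So the mechanism you invoke to close the gap when $|T|$ is small and $|P|$ is large is not available, and the O'Nan--Scott case split you sketch is left entirely unexecuted.

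It is worth noting that the paper itself carries out exactly your fixed-point-counting strategy, but only under the stronger hypothesis that $P$ is \emph{primitive} on $\bm{k}$ (Theorem~\ref{thm:primprobgeneral} via Lemma~\ref{lemma:form}), and the remark following that proof explicitly flags the quasiprimitive extension as open, listing the missing ingredients (quasiprimitive analogues of the order and minimal-degree bounds of~\cite{PraSax1980,Bab1982,LieSax1991}). So your proposed route is not known to work.

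The paper's proof of Theorem~\ref{thm:QP} is completely different and constructive rather than probabilistic. Lemma~\ref{lemma:hbound} builds an explicit base for $P$ on $B$ of size $\lceil\log d_{\bm{k}}(P)/\log\omega_T(\varphi(Q))\rceil$, using a distinguishing partition for $P$ on $\bm{k}$ together with $\varphi(Q)$-orbit representatives on $T$. Since three primes divide $|T|$ (Burnside), $\omega_T(\Aut(T))\geq 4$; and a quasiprimitive $P$ that is not $S_k$ or $A_k$ has $d_{\bm{k}}(P)\leq 4$ by Theorem~\ref{thm:summary}(i). This already gives $b_B(G)=2$. The residual case $P\in\{S_k,A_k\}$ forces $T\simeq A_{k-1}$ and $k\geq 6$ (Lemma~\ref{lemma:twSkAk}); for $k\geq 8$ the same distinguishing-number bound applies, and only $k\in\{6,7\}$ is finished by a direct fixed-point count of your type.
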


In fact, Theorem~\ref{thm:QP} is a consequence of a more technical result,  Theorem~\ref{thm:QPgeneral}, 
where we prove that many quasiprimitive permutation groups of twisted wreath type have base size $2$.

\begin{remark}
\label{remark:mintwisted}
To provide some context for the assumption in Theorem~\ref{thm:QP} that the top group is quasiprimitive, we appeal to a result of Baddeley~\cite{Bad1993} (see Theorem~\ref{thm:blowup}). He proves that if $G$ is a primitive permutation group of twisted wreath type, then either $G$ is of minimal-twisted type---in which case the top group  is  quasiprimitive---or  
$G$ is a blow-up  of a primitive permutation group $H$ of minimal-twisted type, in which case $\soc(H)^r\unlhd  G\leq H\wr S_r$  where  $G$ acts on $\soc(H)^r$ via the product action and $r\geq 2$.
This gives us the following version of the O'Nan-Scott theorem: any primitive permutation group with a non-abelian socle is  either of almost simple, diagonal or minimal-twisted type, or  a blow-up of one of these types. While very little is known about the  primitive blow-ups with base size $2$, much is known about the primitive groups of almost simple or diagonal type with base size $2$. Thus Theorem~\ref{thm:QP} contributes to this picture by showing that every primitive  group of minimal-twisted  type has base size $2$. Moreover, we use Baddeley's result and Theorem~\ref{thm:QP} to prove the primitive case of Theorem~\ref{thm:pyber} below. 
However, we caution the reader that the converse of Theorem~\ref{thm:QP} does not hold (see Examples~\ref{exp:topnotQP} and~\ref{exp:SkwrSr}).
\end{remark}

While we are unable to completely classify the primitive groups of twisted wreath type with base size $2$, we instead
 determine the base size of any  quasiprimitive group  of twisted wreath type up to four possible values (and three in the primitive case). Our result is closely related to another significant   advance in the theory of base sizes. Recall that if $G$ is  a permutation group  of degree $n$, then $|G|\leq n^{b(G)}$, so $\log_n|G|\leq b(G)$. Pyber conjectured~\cite{Pyb1993} that there is an absolute constant $c$ such that $b(G)\leq c\log_n|G|$ for every finite  primitive group $G$ of degree~$n$. Building on the work of 
\cite{Ben05,LieSha1999,LieSha2002,LieSha2014,Faw2013,GluSerSha1998,BurSer2015,FawPra2016}, this  conjecture was  recently established by Duyan, Halasi and Mar\'{o}ti~\cite{DuyHalMar2018}. Halasi, Liebeck and Mar\'{o}ti~\cite{HalLieMar2019} then proved  that  $b(G)\leq 2(\log_n|G|)+24$, which is asymptotically best possible. One of the key ingredients in the proof of Pyber's conjecture concerns the \textit{distinguishing number} of a permutation group $G$ on $\Omega$,  which is the minimal number of parts in a partition  of $\Omega$  for which only the identity fixes every part; we denote it by $d_\Omega(G)$ or $d(G)$.  In~\cite{DuyHalMar2018}, it is proved that if $G$ is  a transitive permutation group of degree $n>1$, then  $\sqrt[n]{|G|}<d(G)\leq 48 \sqrt[n]{|G|}$. We use this theorem to establish our next two results. 

\begin{thm}
\label{thm:pyber}
Let $G$ be a quasiprimitive permutation  group on $\Omega$ of twisted wreath type with socle $T^k$ and top group $R$,  where $T$ is a simple group. Then
$$
b_\Omega(G)=\left\lceil \frac{\log{|G|}}{\log {|\Omega|}}\right\rceil +\varepsilon=\left\lceil \frac{\log{d_{\bm{k}}(R)}}{\log {|T|}}\right\rceil+\delta
$$
for some $\varepsilon,\delta\in \{0,1,2,3\}$ where $\varepsilon\leq \delta$, and if $G$ is primitive on $\Omega$, then $\varepsilon\neq 3$ 
and $\delta\leq \varepsilon+1$.
\end{thm}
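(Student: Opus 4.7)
The plan is to prove the theorem by combining the trivial lower bound $b(G)\geq\lceil\log|G|/\log|\Omega|\rceil$ with a constructive upper bound $b(G)\leq\lceil\log d_{\bm{k}}(R)/\log|T|\rceil+3$, and then translating between the two expressions via the Duyan-Halasi-Mar\'oti estimates $|R|^{1/k}<d_{\bm{k}}(R)\leq 48|R|^{1/k}$. Throughout, I write $G=T^k{:}P$ on $\Omega=T^k$, so that $|\Omega|=|T|^k$, $|G|=|T|^k|P|$, and $|R|\leq|P|$, with equality if and only if $G$ is primitive. The inequality $\varepsilon\geq 0$ is then immediate from $|G|\leq|\Omega|^{b(G)}$.

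For the upper bound, set $d=d_{\bm{k}}(R)$ and $m=\lceil\log d/\log|T|\rceil$, so $|T|^m\geq d$. I would choose a distinguishing partition $\{\pi_1,\ldots,\pi_d\}$ of $\bm{k}$ for $R$ together with pairwise distinct (and, if needed, $\Aut(T)$-orbit-distinguishing) $m$-tuples $\bm{s}^{(1)},\ldots,\bm{s}^{(d)}\in T^m$, then define $\omega_i\in T^k$ for $1\leq i\leq m$ by $(\omega_i)_\ell=s_i^{(j)}$ whenever $\ell\in\pi_j$. Taking $\omega_0=1_B$, so $G_{\omega_0}=P$ and we may work inside $P$, the distinguishing property forces every element of $P$ fixing $\omega_1,\ldots,\omega_m$ to lie in $N:=\ker(P\to R)$. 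Since $G$ is quasiprimitive, $P$ (and hence $N$) acts faithfully on $T^k$; because $N\leq P_1$ acts on $T^k$ through the twisting homomorphism $\phi\colon P_1\to\Aut(T)$, the group $N$ embeds into a product of copies of $\Aut(T)$. Standard base-size bounds for $\Aut(T)$ acting on $T$ for $T$ non-abelian simple then ensure that two additional generic points in $T^k$ suffice to trivialise $N$, giving $b(G)\leq m+3$ and hence $\delta\leq 3$. I expect this construction to be the main obstacle: pinning down the embedding $N\hookrightarrow\Aut(T)^k$ via the twist, verifying that the $\omega_i$ really do cut the stabiliser down to $N$ (which may require the $\bm{s}^{(j)}$ to lie in distinct $\Aut(T)$-orbits), and uniformly bounding the number of extra points across all non-abelian simple $T$.

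With both bounds in place, the remaining inequalities are arithmetic consequences of the Duyan-Halasi-Mar\'oti estimates. Since $|T|\geq 60>48$ for every non-abelian simple $T$, the upper estimate yields
\[
\frac{\log d_{\bm{k}}(R)}{\log|T|}\leq\frac{\log 48}{\log|T|}+\frac{\log|R|}{k\log|T|}<1+\frac{\log|P|}{k\log|T|}=\frac{\log|G|}{\log|\Omega|},
\]
so $\lceil\log d_{\bm{k}}(R)/\log|T|\rceil\leq\lceil\log|G|/\log|\Omega|\rceil$ and $\varepsilon\leq\delta$. In the primitive case $|P|=|R|$, and the lower estimate $d_{\bm{k}}(R)>|R|^{1/k}$ gives $\log d_{\bm{k}}(R)/\log|T|>\log|G|/\log|\Omega|-1$, from which $\lceil\log|G|/\log|\Omega|\rceil\leq\lceil\log d_{\bm{k}}(R)/\log|T|\rceil+1$ and $\delta\leq\varepsilon+1$. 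Finally, $\varepsilon\neq 3$ in the primitive case follows by splitting along Remark~\ref{remark:mintwisted}: if $G$ is of minimal-twisted type then its top group is quasiprimitive and Theorem~\ref{thm:QP} forces $b(G)=2$, so $\varepsilon\leq 1$; otherwise $G$ is a blow-up $\soc(H)^r\unlhd G\leq H\wr S_r$ of a primitive minimal-twisted $H$ with $b(H)=2$, and a product-action base construction combining a base for $H$ on its own base group with a distinguishing partition of $\bm{r}$ should yield $b(G)\leq\lceil\log|G|/\log|\Omega|\rceil+2$, i.e.\ $\varepsilon\leq 2$.
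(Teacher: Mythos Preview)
Your overall strategy matches the paper's exactly: the upper bound is Lemma~\ref{lemma:Tbound}, the translation between the two ceilings is Lemma~\ref{lemma:keybound} (via the Duyan--Halasi--Mar\'oti estimate of Theorem~\ref{thm:dist}), and the primitive refinement $\varepsilon\leq 2$ is obtained from Baddeley's blow-up theorem (Theorem~\ref{thm:blowup}) together with the product-action bound of Theorem~\ref{thm:expgeneral}, whose input $b_\Delta(H)=2$ comes from Theorem~\ref{thm:QP}.

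One point in your upper-bound construction needs adjustment. The claim that $\omega_1,\ldots,\omega_m$ alone force the stabiliser in $P$ into $N=\core_P(Q)$ fails for arbitrary distinct tuples $\bm{s}^{(j)}\in T^m$: fixing $\omega_j$ only gives $(s_j^{(u)})^{\varphi(q_{x,i})}=s_j^{(v)}$ (Lemma~\ref{lemma:ideal}), which does not force $u=v$ unless the tuples are $\varphi(Q)$-orbit-distinguishing, and insisting on that replaces $\log|T|$ by $\log\omega_T(\varphi(Q))$ in the bound (this is Lemma~\ref{lemma:hbound}, not Lemma~\ref{lemma:Tbound}). The paper instead lets the two extra points do double duty: with $\omega_{m+1}=(t,\ldots,t)$, $\omega_{m+2}=(s,\ldots,s)$ and $T=\langle t,s\rangle$ (via \cite{AscGur1984}), any $x$ fixing both has $\varphi(q_{x,i})=1$ for every $i$; with the twist gone, fixing the $\omega_j$ gives $s_j^{(u)}=s_j^{(v)}$ on the nose, hence $u=v$ and $x\in\core_P(Q)$, and since each $q_{x,i}=a_i^{-1}xa_i\in U$ one concludes $x\in\core_P(U)\leq\core_P(V)=1$. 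So the $m+2$ points act together rather than in two stages, and your ``two generic points to trivialise $N$'' are precisely the device that also makes the distinguishing-partition step go through with $|T|$ in the denominator.
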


For any non-abelian simple group $T$ and transitive permutation group $R$ of degree $k\geq 2$, there is a quasiprimitive group of twisted wreath type with socle $T^k$ and top group $R$ (see Remark~\ref{remark:lotsQP}). Since the symmetric group $S_r$ has distinguishing number $r$,  Theorem~\ref{thm:pyber} demonstrates that the base size of a quasiprimitive group of twisted wreath type can be arbitrarily large. In fact, we will see that this is also true for primitive groups of twisted wreath type. 

In our next result, we establish nearly exact base size formulae for a large class of primitive permutation groups of twisted wreath type.

\begin{thm}
\label{thm:exp}
Let $H$ be a primitive permutation group on $\Delta$ of twisted wreath type with socle $T^m$, where $T$ is a  simple group. Let $K$ be a transitive subgroup of $S_r$ where $r\geq 2$. Then $G:=H\wr K$ is a primitive permutation group on $\Omega:=\Delta^r$ of twisted wreath type. If the top group of $H$ is quasiprimitive, then 
$$
b_\Omega(G)=\left\lceil \frac{\log|G|}{\log|\Omega|}\right\rceil+\varepsilon=\left\lceil \frac{\log{d_{\bm{r}}(K)}}{m\log {|T|}}\right\rceil+\delta
$$
 for some $\varepsilon\in \{0,1\}$ and  $\delta\in \{1,2\}$ where $\varepsilon+1\leq \delta$. 
\end{thm}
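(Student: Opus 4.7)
First, I would verify that $G = H \wr K$ acting on $\Omega = \Delta^r$ in product action is primitive of twisted wreath type. By hypothesis the top group of $H$ is quasiprimitive, so Baddeley's result (Theorem~\ref{thm:blowup}) implies $H$ is of minimal-twisted type, whence the blow-up $G = H \wr K$ is primitive of twisted wreath type with socle $\soc(H)^r \simeq T^{mr}$ and top group $R := P \wr K$ acting on $\bm{m} \times \bm{r}$ in product action, where $P$ is the (faithful) top group of $H$.

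Next, I would set up the standard description of bases for product-action wreath products. Writing $s_n$ for the number of regular $H$-orbits on $\Delta^n$, one checks that
$$
b_\Omega(G) \;=\; \min\bigl\{\, n \geq 1 : s_n \geq d_{\bm{r}}(K)\,\bigr\},
$$
because a base of size $n$ is precisely a choice of $r$ columns $\bar\omega_j \in \Delta^n$, each in a regular $H$-orbit, with the induced labelling of $\bm{r}$ distinguishing $K$. The trivial bound $s_n \leq |\Delta|^n / |H|$ converts into
$$
b_\Omega(G) \;\geq\; 1 + \Bigl\lceil \frac{\log|P| + \log d_{\bm r}(K)}{m\log|T|}\Bigr\rceil \;\geq\; B+1,
$$
while Theorem~\ref{thm:QP} gives $b_\Delta(H)=2$, so $s_2 \geq 1$, and extending a regular pair coordinate-by-coordinate yields $s_n \geq |\Delta|^{n-2}$ for $n \geq 2$, whence $b_\Omega(G) \leq B+2$. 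Together with $A \leq b_\Omega(G)$, this establishes $\delta \in \{1,2\}$ and $A \leq B+2$.

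The main obstacle is showing $A \geq B+1$, which is equivalent to the inequality $\varepsilon + 1 \leq \delta$ and secures $\varepsilon \leq 1$. Here I would exploit the primitivity of $H$: Baddeley's characterisation requires $T$ to be a composition factor of the point stabiliser $P_1 \leq P$, so $|P_1| \geq |T|$ and $|P| = m\,|P_1| \geq m|T| \geq 120$. Combining this with the Duyan--Halasi--Mar\'oti bound $d_{\bm r}(K) \leq 48\,|K|^{1/r}$ gives
$$
\frac{\log|G|}{\log|\Omega|} - 1 \;=\; \frac{\log|P|}{m\log|T|} + \frac{\log|K|}{rm\log|T|} \;\geq\; \frac{\log(|P|/48)+\log d_{\bm r}(K)}{m\log|T|} \;\geq\; \frac{\log d_{\bm r}(K)}{m\log|T|},
$$
and taking ceilings yields $A \geq 1 + B$. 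Assembling the three inequalities $A \geq B+1$, $A \leq b_\Omega(G) \leq B+2$ and $b_\Omega(G) \geq B+1$ forces $(\varepsilon,\delta)$ to equal one of $(0,1)$, $(0,2)$, $(1,2)$, as required. Apart from locating this structural input $|P| \geq m|T|$ (which is essentially free from the primitivity hypothesis on $H$), the argument reduces to careful bookkeeping with the DHM estimate and the ceiling functions.
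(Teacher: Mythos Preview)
Your core argument is essentially the paper's: both use $b_\Delta(H)=2$ from Theorem~\ref{thm:QP}, the structural bound $[H:T^m]=|P|\geq|T|>48$ coming from $\Inn(T)\leq\varphi(Q)$ (Lemma~\ref{lemma:twmax}), and the Duyan--Halasi--Mar\'oti estimate $d_{\bm r}(K)\leq 48|K|^{1/r}$ to obtain $\log_{|T|^m}d_{\bm r}(K)+1\leq\log_{|\Omega|}|G|$, which pins $b_\Omega(G)$ between $B+1$ and $B+2$. The paper packages the upper bound as an explicit base construction (Lemma~\ref{lemma:prod}) and the arithmetic as Theorem~\ref{thm:expgeneral}; your regular-orbits formulation $b_\Omega(G)=\min\{n:s_n\geq d_{\bm r}(K)\}$ together with the counting estimates $s_n\leq|\Delta|^n/|H|$ and $s_n\geq|\Delta|^{n-2}$ is a standard equivalent route to the same inequalities.

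Two small corrections to your setup. First, the primitivity of $G=H\wr K$ follows directly from the product-action criterion (Lemma~\ref{lemma:QPprod}) and does not require $H$ to be of minimal-twisted type; in fact a quasiprimitive top group does \emph{not} force $H$ to be minimal-twisted (the paper notes this explicitly in the remark following Theorem~\ref{thm:blowup}), so your appeal to Baddeley here is both incorrect and unnecessary. Second, the top group $P\wr K$ acts on $\bm m\times\bm r$ in its \emph{imprimitive} action, not the product action---though you never actually use this.
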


Thus the base size of a primitive group of twisted wreath type can be arbitrarily large.
Note that for any non-abelian simple group $T$, there are primitive groups of twisted wreath type with quasiprimitive top groups  and socle $T^m$ for infinitely many $m$ (see Example~\ref{exp:diag}), so  the base size of a primitive group of twisted wreath type can grow in various ways (see also Example~\ref{exp:SkwrSr}).

Our final result is related to a probabilistic version of the Cameron conjecture. 
 Recall the class $\mathcal{C}$ of almost simple groups that was defined above. Cameron and Kantor  conjectured~\cite{CamKan1993} that there is an absolute constant $c'$ such that the proportion of $c'$-tuples of points that are (ordered) bases for $G$ tends to 1 as $|G|\to \infty$ for $G\in\mathcal{C}$. This conjecture was  proved by Liebeck and Shalev~\cite{LieSha1999}, and it was later shown that  $c'=6$ is best possible~\cite{Bur2018,BurLieSha2009,LieSha2005}. Analogous results have been  established for primitive groups of diagonal type~\cite{Faw2013} and coprime primitive linear groups~\cite{DuyHalPod2020}. 
 The following provides a partial result for the twisted wreath case. 

\begin{thm}
\label{thm:prob}
For those primitive permutation groups  $G$ of twisted wreath type whose  top groups are  primitive,  the proportion of pairs of points  that are bases for $G$ tends to 1 as $|G|\to \infty$.
\end{thm}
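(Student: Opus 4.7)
The plan is to bound the proportion $Q(G)$ of non-base pairs via the standard fixed-point-ratio estimate. Writing $G = T^k{:}P$ with $\Omega = T^k$ the base group and point stabiliser $G_1 = P$, as in the setup preceding Theorem~\ref{thm:QP}, transitivity of $G$ together with a union bound gives
\[
Q(G) \leq \frac{1}{|\Omega|}\sum_{1 \neq p \in P} |\fix_\Omega(p)|.
\]
A direct computation in the twisted wreath product shows that a coset $tP$ is fixed by $p \in P$ if and only if $[p,t] = 1$ in $G$, whence $\fix_\Omega(p) = C_{T^k}(p)$. It therefore suffices to show that
\[
S(G) := \sum_{1 \neq p \in P} \frac{|C_{T^k}(p)|}{|T|^{k}} \longrightarrow 0 \quad \text{as } |G| \to \infty.
\]

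The key structural estimate is $|C_{T^k}(p)| \leq |T|^{m(\bar p)}$, where $\bar p$ denotes the image of $p$ in $\Sym(\bm k)$ and $m(\bar p)$ its number of cycles. Indeed, for each cycle $(i_1,\ldots,i_\ell)$ of $\bar p$, the twisting data forces the coordinates of a centralising element along the cycle to be determined by $t_{i_1}$, subject to the single self-consistency condition that $t_{i_1}$ lies in the fixed-point set of a certain automorphism of $T$; so each cycle contributes at most $|T|$ choices. Since $k - m(\bar p) \geq d(\bar p)/2$, where $d(\bar p)$ is the number of points moved by $\bar p$, we obtain the fixed-point-ratio estimate $|\fix_\Omega(p)|/|\Omega| \leq |T|^{-d(\bar p)/2}$. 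By primitivity of $G$, $P$ is faithful on $\bm k$; by hypothesis $\bar P := P^{\bm k} \leq \Sym(\bm k)$ is also primitive, so $\bar p \neq 1$ for all $1 \neq p \in P$.

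I would then split $S(G) \leq \sum_{1\neq \bar p \in \bar P}|T|^{-d(\bar p)/2}$ according to the structure of $\bar P$. If $\bar P$ contains $\Alt(\bm k)$ in its natural action, then the condition that $T$ be a composition factor of $P_1$ forces $T\cong A_{k-1}$ for all but finitely many small $k$; bounding the number of permutations of $S_k$ of support exactly $d$ by $k^d$ then gives
\[
S(G) \leq \sum_{d=2}^{k} k^{d}\bigl((k-1)!/2\bigr)^{-d/2},
\]
a sum dominated by its $d = 2$ term of order $k^2/(k-1)!$, which tends to $0$. Otherwise, Mar\'oti's theorem provides $|\bar P| \leq k^{1 + \lceil\log_2 k\rceil}$ and classical lower bounds on the minimum degree of a primitive group not containing $\Alt(\bm k)$ give $\mathrm{mindeg}(\bar P) \geq c\sqrt{k}$ for an absolute constant $c > 0$ (modulo finitely many small-degree exceptions), yielding $S(G) \leq |\bar P|\cdot |T|^{-\mathrm{mindeg}(\bar P)/2} \to 0$ as $k \to \infty$. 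The residual configurations have $k$ bounded, and then $|G|\to\infty$ forces $|T|\to\infty$ while $|P|\leq k!$ remains bounded, so $S(G)\leq |P|/|T| \to 0$.

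The principal obstacle is obtaining the centraliser estimate $|C_{T^k}(p)| \leq |T|^{m(\bar p)}$ cleanly from the definition of the twisted wreath action, and in the alternating/symmetric case extracting the constraint $T \cong A_{k-1}$ from the composition-factor condition on $P_1$. Once these structural inputs are in place, the analysis reduces to comparing growth rates of the classical bounds on primitive permutation groups, which is routine.
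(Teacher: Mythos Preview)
Your strategy matches the paper's almost exactly: both reduce to bounding a sum of fixed-point ratios of elements of $P$ acting on $B$, both use the centraliser estimate $|\fix_B(p)|\leq |T|^{\omega_{\bm k}(p)}$ (this is the paper's Lemma~\ref{lemma:fix}), and both split into the case $P\in\{S_k,A_k\}$ (where primitivity forces $T\cong A_{k-1}$) versus the generic primitive case. Your $S_k/A_k$ analysis and your bounded-$k$ endgame are fine and essentially what the paper does. The paper sums over conjugacy classes of prime-order elements (via Lemma~\ref{lemma:form}) rather than over all nontrivial elements, but this is cosmetic.

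There is, however, a genuine gap in your treatment of the case where $P$ is primitive but does not contain $A_k$. Mar\'oti's bound $|P|\leq k^{1+\lceil\log_2 k\rceil}$ does \emph{not} hold for all such $P$: the product-action groups $A_m^r\leq P\leq S_m\wr S_r$ on $k=\binom{m}{\ell}^r$ points are explicit exceptions, and for these $|P|$ is of order $(m!)^r$, which for fixed $r$ and large $m$ exceeds any polylogarithmic power of $k$. Worse, even granting a correct order bound, your crude estimate $S(G)\leq |P|\cdot|T|^{-\mathrm{mindeg}(P)/2}$ fails to tend to zero in this regime: for $P=S_m\wr S_2$ acting on $k=m^2$ points with $T=A_5$ one has $\mathrm{mindeg}(P)=2m$, so the bound becomes $(m!)^2\cdot 2\cdot 60^{-m}$, which diverges. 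The actual sum $\sum_{p\neq 1}|T|^{-(k-\omega(p))}$ \emph{does} tend to zero here, because elements with small $k-\omega(p)$ are rare, but showing this requires stratifying by $k-\omega(p)$ (or by support size) rather than replacing every term by the worst one.

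The paper does not carry out this stratification inline; it quotes \cite[Lemma~4.4]{Faw2013} for the bound in the non-alternating case, and the remark following the proof of Theorem~\ref{thm:primprobgeneral} records the ingredients used there: the transposition criterion, the Praeger--Saxl bound $|P|\leq 4^k$, Babai's bound $|P|\leq\exp(4\sqrt{k}\log^2 k)$, and the Liebeck--Saxl dichotomy that either $\mathrm{mindeg}(P)\geq k/3$ or $P$ lies in the product-action family. To complete your argument you would need to handle the product-action exceptions separately, counting for each value of $k-\omega(p)$ how many elements of $P$ realise it.
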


This paper is organised as follows.
In~\S\ref{s:prelim}, we provide some basic notation and terminology, along with a brief description of the O'Nan-Scott theorem. In~\S\ref{s:dist}, we describe some of the bounds on   distinguishing numbers from the literature. In~\S\ref{s:tw}, we outline the theory of twisted wreath products and state a version of Baddeley's classification of the primitive permutation groups of twisted wreath type. Theorems~\ref{thm:QP},~\ref{thm:exp} and~\ref{thm:prob} are all consequences of more technical results that concern much larger classes of groups. In~\S\ref{s:prob}, we prove Theorem~\ref{thm:prob} and its more general form. In~\S\ref{s:bounds}, we give some general bounds on the base size of a twisted wreath product. In~\S\ref{s:QP}, we  prove Theorem~\ref{thm:QP} and its more general form, and in~\S\ref{s:pyber}, we prove Theorem~\ref{thm:pyber} as well as Theorem~\ref{thm:exp} and its more general form.  Lastly, we give some examples in~\S\ref{s:exp}.

Note that some of the material in this paper can be found in the author's Ph.D. thesis~\cite{FawThesis}. However,  
the results in~\S\ref{s:bounds}--\ref{s:QP} are more general than those  in~\cite{FawThesis}, and~\S\ref{s:pyber}--\ref{s:exp} are new.

\section{Preliminaries}
\label{s:prelim}

All groups in this paper are finite, and all actions are written on the right. Basic terminology for permutation groups may be found in~\cite{DixMor1996}.   For a subgroup $H$ of a group $G$, the \textit{core} of $H$ in $G$, denoted by $\core_G(H)$, is defined to be $\bigcap_{g\in G}H^g$; it is precisely the kernel of the action of $G$ on the (right)  cosets of $H$.  For a group $G$ acting on a set $\Omega$, we write $G^\Omega$ for the permutation group induced by $G$ on $\Omega$.  For $g\in G$, we write 
 $\fix_\Omega(g)$ for the set of fixed points of $g$. We also write $\omega_\Omega(G)$ for the number of orbits of $G$ on $\Omega$, and we write  $\omega_\Omega(g)$ for $\omega_\Omega(\langle g\rangle)$. 

Next we give some notation for wreath products and their actions. Let $H$ be a permutation group on $\Delta$, and let $K\leq S_r$. The wreath product $H\wr K$ is the semidirect product $H^r{:}K$, where  $\pi^{-1}\in K$  maps $(h_1,\ldots,h_r)\in H^r$ to $(h_{1^\pi},\ldots,h_{r^\pi})$. In the product action of $H\wr K$ on $\Delta^r$, the elements $(h_1,\ldots,h_r)\in H^r$ and $\pi^{-1}\in K$ map $(\delta_1,\ldots,\delta_r)\in \Delta^r$ to $(\delta_1^{h_1},\ldots,\delta_r^{h_r})$ and $(\delta_{1^\pi},\ldots,\delta_{r^\pi})$, respectively. In the imprimitive action of $H\wr K$ on $\Delta\times \bm{r}$, the element $(h_1,\ldots,h_r)\pi\in H\wr K$ maps $(\delta,i)\in \Delta\times \bm{r}$ to $(\delta^{h_i},i^\pi)$. 

We now define the blow-up of a permutation group, following~\cite[\S2]{Bad1993}. Let $W:=H\wr S_r$, with $H$ and $r$ as above. Let $\rho:W\to S_r$ be the projection map, and let $W_0$ be the preimage of the stabiliser in $S_r$ of the point $r\in \bm{r}$, so that $W_0=(H\wr S_{r-1})\times H$. Let $\sigma:W_0\to H$ be the projection map.  A subgroup $G$ of $W$ is \textit{large} if $\rho(G)$ is transitive on $\bm{r}$ and $\sigma(G\cap W_0)=H$.
A permutation group $X$ is a \textit{blow-up of $H$ on $\Delta$ of index $r$} if $X$ is permutation isomorphic to a large subgroup $G$ of $H\wr S_r$ where $\soc(H)^r\leq G$ 
and $G$ acts via the product action on $\Delta^r$.

To finish this section, we briefly consider the structure of a quasiprimitive permutation group. 
By the O'Nan-Scott theorem~\cite{LiePraSax1988}, the finite primitive permutation groups are of affine type, almost simple type, diagonal type, product type or twisted wreath type. A similar theorem is proved in~\cite{Pra1993} for the finite quasiprimitive permutation groups. In Table~\ref{tab:ONS}, we give a rough description of these types (using a similar format to the table   in~\cite{BurSer2015}).   As in the primitive case, a quasiprimitive group  $G$ has at most two minimal normal subgroups, and when there are two, they are isomorphic and regular. The socle $B$ of  $G$ is therefore isomorphic to $T^k$ for some simple group $T$ and $k\geq 1$, and $T$ is abelian if and only if $G$ is of affine type. Note that in type III(b)(ii), the action of $G$ is the usual product action, but this is not necessarily the case in type III(b)(i) when $G$ is not primitive. Note also that in type II, the socle $T$ may be regular when $G$ is quasiprimitive,  but $T$ is not regular when $G$ is primitive. However, the socle $T^k$ is not regular in types III(a) or III(b). 

 \begin{table}[!h]
\centering
\begin{tabular}{ l l }
\hline
Type & Description  \\
\hline
I & Affine type: $T$ abelian \\
II & Almost simple type: $T\leq G\leq \Aut(T)$ \\
III(a)(i) & Diagonal type: $G\leq T^k.(\Out(T)\times P)$, $P\leq S_k$ transitive, $k\geq 2$ \\
III(a)(ii) & Diagonal type: $G\leq T^2.\Out(T)$, $k=2$\\
III(b)(i) & Product type: $G\leq H\wr P$, $H$ quasiprimitive of type II, $P\leq S_k$ transitive, $k\geq 2$ \\
III(b)(ii) & Product type: $G\leq H\wr P$, $H$ quasiprimitive of type III(a), $P\leq S_\ell$ transitive, $\ell\mid k$ \\
III(c) & Twisted wreath type: $G=T^k{:}P$, $P$ acts transitively on $\bm{k}$, $k\geq 2$ \\
\hline
\end{tabular}
\caption{The finite quasiprimitive groups $G$ with socle $T^k$}
\label{tab:ONS}
\end{table}

\section{Bounds on distinguishing numbers}
\label{s:dist}

In the introduction, we defined the distinguishing number of a permutation group. Since this concept is critical to our study of the base sizes of quasiprimitive groups of twisted wreath type, we give a brief survey of some of the known bounds on distinguishing numbers. For more details about the origin of this parameter and its uses in graph theory, see the survey paper~\cite{BaiCam2011}.

First we need some terminology. 
Let $G$ act on a set $\Omega$. We say that a partition of $\Omega$ is \textit{distinguishing} if the only elements of $G$ that fix every part of the partition are those that fix every element of $\Omega$. The \textit{distinguishing number} of $G$, denoted by $d_\Omega(G)$ or $d(G)$, is the minimal number of parts in a distinguishing partition for $G$. 
Similarly, we say that a subset of $\Omega$ is a \textit{base} for $G$ if the only elements of $G$ that fix the subset pointwise are those that fix $\Omega$ pointwise, and we denote the minimal size of a base for $G$ by $b_\Omega(G)$ or $b(G)$. Observe that $d(G)\leq b(G)+1$. 
Observe also that $d_{\bm{n}}(S_n)=b_{\bm{n}}(S_n)+1=n$ and $d_{\bm{n}}(A_n)=b_{\bm{n}}(A_n)+1=n-1$. 

Let $G$ be a permutation group on $\Omega$ with degree $n$. Let $\mathcal{P}_d(\Omega)$ be the set of ordered partitions of $\Omega$ with $d$ (possibly empty) parts where $d\geq 1$. 
 Since $G$  has a regular orbit on $\mathcal{P}_{d(G)}(\Omega)$, it follows that $|G|<d(G)^n$ when $G\neq 1$. The following remarkable  result is \cite[Theorem~1.2]{DuyHalMar2018}.

\begin{thm}
\label{thm:dist}
Let $G$ be a transitive permutation group  of degree $n>1$. Then  $$\sqrt[n]{|G|}<d(G)\leq 48 \sqrt[n]{|G|}.$$
\end{thm}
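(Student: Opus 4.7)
The lower bound $\sqrt[n]{|G|}<d(G)$ is immediate from the argument sketched just before the theorem statement: $G$ acts on the set $\mathcal{P}_d(\Omega)$ of ordered partitions of $\Omega$ into $d$ parts, this set has size $d^n$, a distinguishing partition has trivial stabiliser (yielding a regular $G$-orbit), and the one-part partition $(\Omega,\varnothing,\ldots,\varnothing)$ is $G$-invariant. Thus $|G|+1\leq d(G)^n$, which gives the strict inequality. The substantive content of the theorem is therefore the upper bound $d(G)\leq 48\sqrt[n]{|G|}$.

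For the upper bound I would use the probabilistic method on random colourings. Set $d:=\lceil 48\sqrt[n]{|G|}\rceil$ and colour each point of $\Omega$ independently and uniformly with one of $d$ colours. For any $g\in G$, the induced partition into colour classes is $g$-invariant if and only if the colouring is constant on each cycle of $g$, which happens with probability $d^{\omega_\Omega(g)-n}$. By the union bound, a distinguishing $d$-colouring exists whenever
$$
\sum_{g\in G\setminus\{1\}} d^{\omega_\Omega(g)}<d^n.
$$
Since $d^n$ is of order $48^n|G|$, the analytic task reduces to showing that the cycle sum on the left-hand side, after removing the identity contribution, is substantially smaller than $d^n$.

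The main obstacle is obtaining uniform control of this cycle sum across all transitive groups. A natural first step is the elementary estimate $\omega_\Omega(g)\leq(n+|\fix_\Omega(g)|)/2$ for $g\neq 1$, which converts everything into fixed-point-ratio bounds. For primitive $G$ these are available through the deep CFSG-based theorems of Liebeck--Saxl, Liebeck--Shalev and Guralnick--Magaard, applied case by case along the O'Nan--Scott families (almost simple, diagonal, twisted wreath, affine); product-type groups are then handled by a wreath-product bookkeeping that expresses cycle counts on $\Delta^r$ in terms of cycle counts on $\Delta$ and on $\bm{r}$. A general transitive $G$ must be reduced to the primitive case by passing to a minimal non-trivial block system, relating $\omega_\Omega(g)$ to the cycle structures of the induced action on the blocks and of the stabiliser on a single block, and inducting on $n$. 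The constant $48$ is extracted by optimising across these estimates and is certainly not tight; the genuine difficulty lies in assembling the CFSG-based fixed-point-ratio bounds and the block-system induction into a single uniform estimate that is valid for every transitive permutation group.
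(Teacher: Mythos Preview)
The paper does not prove Theorem~\ref{thm:dist}; it is quoted verbatim from \cite[Theorem~1.2]{DuyHalMar2018} and used as a black box. There is therefore no in-paper proof to compare against, only the original source.

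Your lower-bound argument is correct and matches the one-line sketch the paper gives just before the theorem.

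Your upper-bound discussion is an outline, not a proof. You correctly reduce the probabilistic argument to controlling $\sum_{g\neq 1}d^{\omega_\Omega(g)}$ and correctly flag this as the hard part, but none of the estimates are carried out; as written, the proposal is a description of the obstacles rather than a resolution of them. Moreover, from what the present paper reveals about the argument in \cite{DuyHalMar2018} (see the discussion around Theorem~\ref{thm:summary} and Lemma~\ref{lemma:distblock}), the actual proof there does \emph{not} proceed via cycle sums and fixed-point ratios. Instead it works directly with distinguishing numbers: primitive (indeed quasiprimitive) groups other than $A_n,S_n$ satisfy $d(G)\leq 4$ by Cameron--Neumann--Saxl, Seress and Dolfi, and the general transitive case is then handled by the multiplicative block-system inequality $d_\Omega(G)\leq d_\Delta(H)\lceil\sqrt[m]{d_\Sigma(K)}\rceil$ of Lemma~\ref{lemma:distblock}. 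So your probabilistic strategy, while a natural first thought, is a different route from the one actually taken, and you would need to complete the hard analytic step to make it a proof.
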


For certain classes of permutation groups, we can be much more precise.
 Let $G$ be a permutation group on $\Omega$ with degree $n$, and suppose that $G$ is not $A_n$ or $S_n$.
Cameron, Neumann and Saxl~\cite{CamNeuSax1984} proved that if $G$ is primitive, then either  
 $G$ has a regular orbit on the power set of $\Omega$ (i.e., $d(G)=2$), or $G$ is one of finitely many exceptions. By work of  Seress~\cite{Ser1997},  the exceptions have degree at most $32$, and Dolfi later proved in~\cite[Lemma~1]{Dol2000} that $d(G)\leq 4$. 
Moreover,  these results were used by Duyan, Halasi and Mar\'oti in their proof of Theorem~\ref{thm:dist} to show that if $G$ is quasiprimitive, then  $d(G)\leq 4$; see~\cite[Lemma~2.7]{DuyHalMar2018}. In fact, Devillers, Morgan and Harper proved~\cite{DevMorHar2019} that if $G$ is quasiprimitive but not primitive, then $d(G)=2$. They also  proved that if $G$ is semiprimitive but not quasiprimitive, then $d(G)\leq 3$, with equality if and only if $G=\GL_2(3)$ in its action on non-zero vectors. (A permutation group $G$ is  \textit{semiprimitive} if every normal subgroup is transitive or semiregular.)  Furthermore, Gluck proved in \cite[Corollary~1]{Glu1983} that if $G$ has odd order, then $d(G)\leq 2$, and  Seress proved in~\cite[Theorem~1.2]{Ser1996} that if $G$ is soluble, then $d(G)\leq 5$ (and this bound is tight). More generally,  Halasi and Podoski used the work of Dolfi~\cite{Dol2000}  to prove in~\cite[Theorem~2.3]{HalPod2016} that, for $s\geq 3$, if $A_s$ is not a section of $G$, then $d(G)\leq s$.  We summarise these results in the following theorem for convenience.

\begin{thm}
\label{thm:summary}
Let $G$ be a non-trivial  permutation group  of degree $n$ that is not $A_n$ or $S_n$. 
\begin{itemize}
\item[(i)]If $G$ is semiprimitive, then either $n>32$ and $d(G)=2$, or $n\leq 32$ and $d(G)\leq 4$.
\item[(ii)]  If $G$ is soluble, then $d(G)\leq 5$, and if $G$ has odd order, then $d(G)= 2$.
\item[(iii)] For $s\geq 3$, if  $A_s$ is not a section of $G$, then $d(G)\leq s$.
\end{itemize}
\end{thm}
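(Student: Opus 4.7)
The plan is to assemble the theorem directly from the results surveyed in the preceding paragraph; no independent argument is needed beyond a careful case split and a couple of endpoint checks. I would address the three parts separately.

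For part~(i), I would partition according to the level of transitivity of $G$. In the primitive case, combining the Cameron--Neumann--Saxl theorem \cite{CamNeuSax1984} with Seress's bound \cite{Ser1997} on the finitely many exceptions (each of degree at most~$32$) gives $d(G)=2$ whenever $n>32$, while Dolfi \cite[Lemma~1]{Dol2000} supplies $d(G)\leq 4$ for the remaining primitive exceptions. In the quasiprimitive-but-not-primitive case, Devillers--Morgan--Harper \cite{DevMorHar2019} yields $d(G)=2$ outright. In the semiprimitive-but-not-quasiprimitive case, the same paper gives $d(G)\leq 3$, with equality only when $G\cong \GL_2(3)$ acting on the eight non-zero vectors of its natural module. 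Piecing the three sub-cases together yields the stated dichotomy.

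For part~(ii), I would cite \cite[Theorem~1.2]{Ser1996} for the soluble bound $d(G)\leq 5$ and \cite[Corollary~1]{Glu1983} for the odd-order bound $d(G)\leq 2$. Since $G\neq 1$ forces $d(G)\geq 2$ (the one-part partition is distinguishing only when $G$ fixes every point of $\Omega$), equality holds in the odd-order case. Part~(iii) I would quote directly from \cite[Theorem~2.3]{HalPod2016}, whose hypothesis ``$A_s$ is not a section of $G$'' is exactly what is assumed.

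The only mildly delicate step is the endpoint bookkeeping in part~(i): one must confirm that the lone semiprimitive exception $\GL_2(3)$ (degree~$8$) lies in the regime $n\leq 32$, so that the bound $d(G)\leq 4$ absorbs the $d(G)=3$ instance, and that all primitive exceptions identified by Cameron--Neumann--Saxl and Seress likewise have degree at most~$32$. Once these compatibility checks are done, the theorem is a straightforward patchwork of the cited results, with the hypothesis $G\not\in\{A_n,S_n\}$ used only to rule out the trivial saturation $d(S_n)=n$ and $d(A_n)=n-1$ recorded just before the statement.
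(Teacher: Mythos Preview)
Your proposal is correct and follows essentially the same approach as the paper: the theorem is stated there as a summary of the literature survey in the preceding paragraph, with no separate proof, and you have accurately reconstructed the needed case split and citations from that survey. Your endpoint checks (that $\GL_2(3)$ has degree $8\leq 32$, and that the semiprimitive-not-quasiprimitive case gives $d(G)=2$ once $n>32$) are exactly what is required to make the dichotomy in~(i) go through.
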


To finish this section, we discuss  the distinguishing numbers of imprimitive wreath products.  Chan proved in~\cite[Theorem~2.3]{Cha2006} that for any $H\leq \Sym(\Delta)$ and $K\leq S_r$,  the distinguishing number of $H\wr K$ in its imprimitive action on $\Delta\times\bm{r}$ is  the minimal $d$ for which $H$ has at least $d_{\bm{r}}(K)$ regular  orbits on  $\mathcal{P}_d(\Delta)$. Since  an orbit  is regular whenever it consists of distinguishing partitions, $d\geq d_\Delta(H)$. By  Dolfi~\cite[Remark~2]{Dol2000}, if $s\geq d_\Delta(H)$, then 
$H$ has at least $s+1$ regular orbits on $\mathcal{P}_{s+1}(\Delta)$, so  
 $d\leq \max\{d_\Delta(H)+1,d_{\bm{r}}(K)\}$. Moreover, the following result, which was used in~\cite{DuyHalMar2018}  to prove Theorem~\ref{thm:dist}, shows that $d$ can be  much smaller than $d_{\bm{r}}(K)$; see \cite[Lemma~2.4 and Remark~2.5]{DuyHalMar2018} for a proof. 

\begin{lemma}
\label{lemma:distblock}
Let $G$ be a transitive permutation group on $\Omega$. Let $\Delta$ be a block of $G$ with size $m$, and let $\Sigma:=\{\Delta^g: g\in G\}$. Let $H:=G_\Delta^\Delta$ and $K:=G^\Sigma$. Then $d_\Omega(G)\leq d_\Delta(H)\lceil \sqrt[m]{d_\Sigma(K)}\rceil$. 
\end{lemma}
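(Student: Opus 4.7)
The plan is to exhibit an explicit colouring of $\Omega$ with $dt$ colours that is distinguishing for $G$, where $d:=d_\Delta(H)$ and $t:=\lceil d_\Sigma(K)^{1/m}\rceil$, so in particular $t^m\geq d_\Sigma(K)$.

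I would fix the following data: a distinguishing $d$-colouring $\phi$ of $\Delta$ under $H$; a distinguishing partition of $\Sigma$ under $K$ whose parts are injected into $\{1,\ldots,t\}^m$, assigning each block $\Delta'\in\Sigma$ an $m$-tuple label; coset representatives $g_1=1,g_2,\ldots,g_r$ for $G_\Delta$ in $G$, so that $\Delta_i:=\Delta^{g_i}$ enumerates $\Sigma$; and an enumeration $\omega_1,\ldots,\omega_m$ of $\Delta$. Writing the label of $\Delta_i$ as $(b_1^{(i)},\ldots,b_m^{(i)})$, I would then define the transferred colouring $\phi_i(\omega):=\phi(\omega^{g_i^{-1}})$ on $\Delta_i$, the label colouring $\tau_i(\omega_j^{g_i}):=b_j^{(i)}$, and finally $\psi(\omega):=(\phi_i(\omega),\tau_i(\omega))$ for $\omega\in\Delta_i$. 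This is a colouring of $\Omega$ with at most $dt$ colours.

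To verify that $\psi$ is distinguishing, I would take $g\in G$ with $\psi(\omega^g)=\psi(\omega)$ for every $\omega\in\Omega$ and let $\sigma\in K$ denote the induced permutation of $\Sigma$. For each $i$, set $h_i:=g_igg_{\sigma(i)}^{-1}\in G_\Delta$, and let $\bar h_i\in H$ be the element it induces on $\Delta$. A short calculation tracking $\omega_k^{g_i}\mapsto (\omega_k^{h_i})^{g_{\sigma(i)}}$ under $g$ shows that preservation of the $\phi$-coordinate is equivalent to $\bar h_i$ stabilising $\phi$ on $\Delta$, which forces $\bar h_i=1$ by the distinguishing property of $\phi$. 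Once $\bar h_i=1$ for every $i$, preservation of the $\tau$-coordinate collapses to $b_k^{(i)}=b_k^{(\sigma(i))}$ for all $i,k$, so $\sigma$ stabilises the chosen distinguishing partition of $\Sigma$ under $K$, and hence $\sigma=1$. Combining these, $g_igg_i^{-1}\in G_\Delta$ acts trivially on $\Delta$ for every $i$, so $g$ acts trivially on $\Delta_i$ for every $i$, and therefore $g=1$.

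The main subtlety, and the reason for the rigid choices in the construction, is to decouple the two coordinates of $\psi$. Transferring $\phi$ via the coset representatives $g_i$ is what translates the $\phi$-constraint into a condition on a single copy of $H$ acting on $\Delta$ (through the conjugates $\bar h_i$), and fixing the enumeration $\omega_j^{g_i}$ of each block is what makes each label a genuinely ordered $m$-tuple rather than merely a multiset, so that the $\tau$-coordinate has enough resolution to encode a distinguishing partition of $\Sigma$ of size up to $t^m$. Everything else in the verification is routine bookkeeping.
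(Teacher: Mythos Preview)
Your argument is correct. The paper does not supply its own proof of this lemma; it simply cites \cite[Lemma~2.4 and Remark~2.5]{DuyHalMar2018}, so there is no in-paper proof to compare against. Your construction---pulling back a distinguishing $d$-colouring of $\Delta$ to each block via fixed coset representatives, and overlaying it with a $t$-colouring that encodes a distinguishing partition of $\Sigma$ through the base-$t$ digits of an $m$-tuple label---is exactly the standard argument behind the cited result, and your verification that the two coordinates decouple (first forcing each $\bar h_i=1$, then forcing $\sigma=1$) is carried out cleanly.
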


\section{Twisted wreath products}
\label{s:tw}

We begin by defining a twisted wreath product, as in~\cite{Pra1993}. Let  $T$ be a finite non-abelian simple group, and let $P$ be a finite group acting transitively on $\bm{k}$ where $k\geq 2$. Let $Q:=P_1$, the stabiliser in $P$ of $1\in \bm{k}$, and let $\varphi:Q\to \Aut(T)$ be a homomorphism such that $\core_P(\varphi^{-1}(\Inn(T)))=1$.
 For any subset $X$ of $P$, let $T^X$ denote the set of functions from $X$ to $T$, and note that $T^X$ forms a group under pointwise multiplication with identity $e$ where $e(x):=1$ for all $x\in X$. Let 
 $$B:=\{f\in T^P \ |\ f(xq)=f(x)^{\varphi(q)}\ \mbox{for all}\ x\in P,q\in Q\}.$$
  Then $B\leq T^P$, and $P$ acts on $B$  by $f^z(x):=f(z x)$ for  $z,x\in P$ and $f\in B$. Since $(fg)^z=f^z g^z$ for all   $z\in P$ and $f,g\in B$, we may define the \textit{twisted wreath product} $T\twr_\varphi P$  to be the semidirect product $B{:} P$. Further, $T\twr_\varphi P$ acts on  $B$ by $f^{gx}:=(fg)^x$ for $f,g\in B$ and $x\in P$. Note that $P=(T\twr_\varphi P)_e$. We refer to $B$ as the \textit{base group} of $T\twr_\varphi P$.
  
Next we make some observations concerning the twisted wreath product  $G:=T\twr_\varphi P$ and its action on $B$. 
Since $P$ is transitive on $\bm{k}$, we may choose a left transversal $L:=\{a_1,\ldots,a_k\}$ for $Q$ in $P$ such that $i^{a_i}=1$ for all $i\in\bm{k}$.
Any element of $T^L$ can be extended to an element of $B$, so $B= T_1\times\cdots\times T_k\simeq T^k$ where $T_i:=\{f\in B : f(a_j)=1\ \mbox{for}\ j\in\bm{k}\setminus\{i\}\}$. Observe that if $x\in P$ and $i\in \bm{k}$, then $q_{x,i}:=a_i^{-1}xa_{i^x}\in Q$, so  for all $f\in B$,
\begin{equation}
\label{eqn:tw}
f^x(a_{i^x})=f(a_i)^{\varphi(q_{x,i})}.
\end{equation}
 It follows that for all $x\in P$ and $i\in \bm{k}$, 
\begin{equation}
\label{eqn:top}
T_i^x=T_{i^x}. 
\end{equation}
Hence $B$ is a minimal normal subgroup of $G$.  Since $1=\core_P(\varphi^{-1}(\Inn(T)))\simeq C_G(B)$  by~\cite[Proposition~2.7]{Bad1993}, 
$B=\soc(G)$. Further,  $1=\core_P(\ker(\varphi))=C_P(B)$ by~(\ref{eqn:tw}), so the action of $G$ on $B$ is faithful. Thus $G$ is a quasiprimitive permutation group whose socle $B$ is non-abelian, non-simple and  regular. Further, $P^{\bm{k}}$ is the top group of $G$ by~(\ref{eqn:top}).
  
We will use the following hypothesis to simplify notation for the remainder of the paper. 

\begin{hypothesis}
\label{hyp:tw}
Let  $T$ be a finite non-abelian simple group, and let $P$ be a finite group acting transitively on $\bm{k}$ where $k\geq 2$. Let $Q:=P_1$, the stabiliser in $P$ of $1\in \bm{k}$, and let $\varphi:Q\to \Aut(T)$ be a homomorphism such that $\core_P(V)=1$ where $V:=\varphi^{-1}(\Inn(T))$. Let $U:=\ker(\varphi)$.  Let $G:= T\twr_\varphi P$, and let $B$ be the base group of $G$. Recall that $G$ acts on  $B$ by $f^{gx}:=(fg)^x$ for $f,g\in B$ and $x\in P$.
Let $L:=\{a_1,\ldots,a_k\}$ be a left transversal for $Q$ in $P$ such that $i^{a_i}=1$ for all $i\in\bm{k}$.
For $x\in P$ and $i\in \bm{k}$, let  $q_{x,i}:=a_i^{-1}xa_{i^x}\in Q$. Recall that~(\ref{eqn:tw}) and~(\ref{eqn:top}) hold.
\end{hypothesis}

Recall from the introduction that a finite quasiprimitive permutation group $G$ is of twisted wreath type whenever $G$ has a unique non-abelian non-simple regular minimal normal subgroup. By the quasiprimitive version of the O'Nan-Scott theorem~\cite{Pra1993} (see~\S\ref{s:prelim}), such groups are precisely those described by Hypothesis~\ref{hyp:tw} (see~\cite[Remark~2.1]{Pra1993}). We record this result here. 

\begin{thm}
\label{thm:twONan}
 For a finite  permutation group $X$,  the following are equivalent.
\begin{itemize}
\item[(i)] $X$ is quasiprimitive   of twisted wreath type. 
\item[(ii)] $X$ is permutation isomorphic to a twisted wreath product $G$ acting on $B$ where $G$ and $B$ are  described by Hypothesis~\emph{\ref{hyp:tw}}.
\end{itemize}
Further, if (ii) holds, then $G$ has  socle $B\simeq T^k$ and top group  $P^{\bm{k}}$.
\end{thm}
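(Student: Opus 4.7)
The plan is to establish the equivalence (i) $\Leftrightarrow$ (ii) separately in each direction; once (ii) is in hand, the structural claim that $\soc(G)\simeq T^k$ with top group $P^{\bm{k}}$ is immediate from equation~(\ref{eqn:top}) and the regularity of $B$.

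For (ii) $\Rightarrow$ (i), most of the work has already been assembled in the paragraphs immediately preceding the theorem. Given $G=T\twr_\varphi P$ satisfying Hypothesis~\ref{hyp:tw}, equation~(\ref{eqn:top}) combined with the transitivity of $P$ on $\bm{k}$ forces the $G$-conjugates of any non-trivial element of some $T_i$ to generate $B$, so $B$ is a minimal normal subgroup of $G$; the identification $C_G(B)\simeq \core_P(V)$ from~\cite[Proposition~2.7]{Bad1993} together with $\core_P(V)=1$ yields $B=\soc(G)$; and equation~(\ref{eqn:tw}) shows $C_P(B)\leq \core_P(U)=1$, so $G$ acts faithfully on $B$. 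Since $B\simeq T^k$ with $T$ non-abelian simple and $k\geq 2$, the socle is non-abelian, non-simple, and regular, so $G$ is quasiprimitive of twisted wreath type. I would just collect these observations as a single short paragraph.

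For (i) $\Rightarrow$ (ii), let $X$ be a finite quasiprimitive permutation group on $\Omega$ with unique non-abelian, non-simple, regular minimal normal subgroup $B$. First I would decompose $B=T_1\times\cdots\times T_k$, where the $T_i$ are the minimal normal subgroups of $B$, all isomorphic to some non-abelian simple group $T$, and $k\geq 2$ since $B$ is non-simple. Fixing $\omega\in\Omega$ and identifying $\Omega$ with $B$ by $b\mapsto \omega^b$, the subgroup $B$ acts on itself by right multiplication while $P:=X_\omega$ acts by conjugation, giving $X=B{:}P$. Because $B$ is the \emph{unique} minimal normal subgroup of $X$, the conjugation action of $P$ on $\{T_1,\ldots,T_k\}$ must be transitive (otherwise an orbit would generate a proper non-trivial normal subgroup of $X$ inside $B$). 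Let $Q$ be the stabiliser of $T_1$, let $\varphi:Q\to \Aut(T_1)\simeq\Aut(T)$ be the induced homomorphism, and choose a left transversal $\{a_1,\ldots,a_k\}$ for $Q$ in $P$ with $T_1^{a_i}=T_i$. The required condition $\core_P(\varphi^{-1}(\Inn(T)))=1$ corresponds via~\cite[Proposition~2.7]{Bad1993} to $C_X(B)=1$, which holds because $Z(B)=1$ and the uniqueness of $B$ as a minimal normal subgroup prevents any non-trivial normal subgroup of $X$ from centralising $B$. I would then form $G:=T\twr_\varphi P$ and define a map $G\to X$ by the identity on $P$ and by sending each $f$ in the base group of $G$ to the product $f(a_1)\cdots f(a_k)\in T_1\cdots T_k=B$.

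The main obstacle is verifying that this last map is a well-defined group isomorphism intertwining the two actions on $B$: concretely, one must check that the twisting identity~(\ref{eqn:tw}) with $q_{x,i}=a_i^{-1}xa_{i^x}$ really reproduces the conjugation action of $x\in P$ on the $i$-th factor of $B$ in $X$. This is a routine but fiddly coset-representative computation, and it is precisely the content of~\cite[Remark~2.1]{Pra1993}. My preferred write-up would therefore present (ii)$\Rightarrow$(i) inline and cite Praeger for (i)$\Rightarrow$(ii), emphasising only that the extracted data $T$, $P$, $\varphi$ satisfy Hypothesis~\ref{hyp:tw} and that the resulting $G$ is permutation isomorphic to $X$ on $\Omega$.
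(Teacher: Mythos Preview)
Your proposal is correct and matches the paper's approach: the paper gives no proof beyond the sentence preceding the theorem, which says these groups ``are precisely those described by Hypothesis~\ref{hyp:tw} (see~\cite[Remark~2.1]{Pra1993})'', so (ii)$\Rightarrow$(i) is exactly the collection of observations you describe (already assembled in the paragraphs before the theorem), and (i)$\Rightarrow$(ii) is deferred to Praeger just as you suggest. One small wrinkle: your transversal convention $T_1^{a_i}=T_i$ is the reverse of the paper's $i^{a_i}=1$ (equivalently $T_i^{a_i}=T_1$), so if you ever unpack the coset computation rather than citing~\cite{Pra1993}, be sure to align with Hypothesis~\ref{hyp:tw}.
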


\begin{remark}
\label{remark:lotsQP}
For any non-abelian simple group $T$ and transitive permutation group $P$ on~$\bm{k}$ where $k\geq 2$, we can construct a quasiprimitive permutation group of twisted wreath type
with socle $T^k$ and top group $P$. Let $Q$ be the stabiliser in $P$ of $1\in \bm{k}$, and  choose any homomorphism $\varphi:Q\to\Aut(T)$ (such as the trivial one). Since $\core_P(Q)=1$,  the conditions of Hypothesis~\ref{hyp:tw} are met,  so by  Theorem~\ref{thm:twONan},  $G:=T\twr_\varphi P$ has the desired properties.  
For example, if $\varphi$ is the trivial homomorphism, then $G$ is permutation isomorphic to the wreath product $T\wr P$ acting via the product action on $T^k$, where $T$ acts on $T$ by right multiplication. However, $G$ is not primitive  since $\{(t,\ldots,t):t\in T\}$ is a non-trivial block of $T\wr P$ on $T^k$ (cf. Lemma~\ref{lemma:twmax}).

There are also examples  where $P$ is not faithful on $\bm{k}$.  Let $R$ be any transitive permutation group on $\bm{k}$ where $k\geq 2$, and let $P:=\langle z\rangle\times R$ where $z$ is an involution that fixes $\bm{k}$ pointwise. Then $P$ is transitive on $\bm{k}$ and $\core_P(Q)=\langle z\rangle$ where  $Q:=P_1=\langle z\rangle \times R_1$. Let $\tau$ be a transposition in $S_n$ where $n\geq 5$, and define $\varphi:Q\to S_n$ by $z\mapsto \tau$ and $R_1\mapsto 1$, so that $\core_P(\varphi^{-1}(A_n))=1$. Then $G:=A_n\twr_\varphi P$ is a quasiprimitive permutation  group of twisted wreath type with socle $A_n^k$  and top group $R$. Observe that $S_n\wr R$ acts on $A_n^k$ via the product action, where $A_n$ acts on $A_n$ by right multiplication and $\langle\tau\rangle $ acts on $A_n$ by conjugation, and $G$ is permutation isomorphic to the subgroup $A_n\wr R\langle (\tau,\ldots,\tau)\rangle$. Similarly,  $S_n\wr R$ is a quasiprimitive permutation group of twisted wreath type with socle $A_n^k$, top group $R$ and point stabiliser $\langle\tau\rangle\wr R$.  
\end{remark}

Next we have some  basic but useful observations.

\begin{lemma}
\label{lemma:PonB}
Under Hypothesis~\emph{\ref{hyp:tw}},  $b_B(G)=b_B(P)+1$. 
\end{lemma}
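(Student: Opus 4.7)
The plan is to exploit the regularity of $B$ on itself to translate any base for $G$ to one containing the identity element $e \in B$, and then use the fact that $G_e = P$ to pass between bases for $G$ and bases for $P$.

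First I would observe that $B \leq G$ acts regularly on $B$ (by right multiplication), so $G$ acts transitively on $B$ and hence sends bases to bases. Given any base $\{f_0, f_1, \ldots, f_n\}$ for $G$ on $B$ of minimum size, I would act by $f_0^{-1} \in B$ (whose action on $B$ is $f \mapsto f f_0^{-1}$) to produce another base $\{e, f_1 f_0^{-1}, \ldots, f_n f_0^{-1}\}$ of the same size containing $e$. Thus $b_B(G)$ is the minimum size of a base containing~$e$.

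Next I would note that the stabiliser of $e$ in $G$ is precisely $P$, since $f^{gx} = e$ with $g \in B$, $x \in P$ forces $g = e$ (as $P$ fixes $e$ under the conjugation action $f^x(y) := f(xy)$ applied at $y=e$ gives $e^x = e$). Consequently, a set $\{e, h_1, \ldots, h_n\} \subseteq B$ is a base for $G$ if and only if
$$P \cap G_{h_1} \cap \cdots \cap G_{h_n} = P_{h_1} \cap \cdots \cap P_{h_n} = 1,$$
which is precisely the condition for $\{h_1, \ldots, h_n\}$ to be a base for $P$ acting on $B$. Combining this with the translation argument yields $b_B(G) = b_B(P) + 1$.

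There is no real obstacle here: the argument is a standard reduction that uses only the semidirect structure $G = B{:}P$ with $B$ regular and $P = G_e$. The only minor points to verify carefully are that $P$ genuinely fixes $e$ under the given action (immediate from the formula $e^x(y) = e(xy) = 1$) and that $b_B(P) = 0$ forces $P = 1$ so that the trivial case $b_B(G) = 1$ is covered.
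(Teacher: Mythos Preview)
Your proof is correct and follows the same approach as the paper: use transitivity of $G$ on $B$ to assume a minimal base contains $e$, then use $P=G_e$ (which is already recorded in the paper just before Hypothesis~\ref{hyp:tw}) to identify bases for $G$ containing $e$ with bases for $P$. The paper's proof is simply a two-sentence version of your argument.
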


\begin{proof}
Since $G$ is transitive on $B$, we may assume that a base of minimal size for $G$ contains the identity element $e$ of $B$. Since $P=G_e$, the result follows. 
\end{proof}
 
\begin{lemma}
\label{lemma:ideal}
Under Hypothesis~\emph{\ref{hyp:tw}}, $f(a_i)^{\varphi(q_{x,i})}=f(a_{i^x})$ for all $x\in P$, $i\in\bm{k}$ and $f\in \fix_B(x)$.  
\end{lemma}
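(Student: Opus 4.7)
The plan is to deduce Lemma~\ref{lemma:ideal} directly from equation~(\ref{eqn:tw}) with essentially no extra work. That equation already records that for \emph{any} $f \in B$, $x \in P$, and $i \in \bm{k}$,
\[
f^x(a_{i^x}) = f(a_i)^{\varphi(q_{x,i})}.
\]
Under the additional hypothesis $f \in \fix_B(x)$, the function $f \in B \subseteq T^P$ is fixed by $x$ in the action of $G$ on $B$, so $f^x = f$ as elements of $T^P$; evaluating both sides at the point $a_{i^x} \in P$ gives $f^x(a_{i^x}) = f(a_{i^x})$. Chaining the two equalities yields $f(a_i)^{\varphi(q_{x,i})} = f(a_{i^x})$, which is exactly the asserted identity.

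The one point I would spell out carefully is that the $f^x$ appearing on the left of~(\ref{eqn:tw}) really does agree with the notion of ``fixed by $x$'' used in $\fix_B(x)$, even though $x$ is being regarded as an element of the whole group $G = T\twr_\varphi P$ rather than just of its complement $P$. This is immediate from the definition $f^{gx} := (fg)^x$: taking $g = e$ (the identity of $B$), the $G$-action of $x \in P$ on $f$ reduces to $(fe)^x = f^x$, where the right-hand side is precisely the $P$-action $f^z(y) := f(zy)$ that underpins~(\ref{eqn:tw}). So there is no genuine obstacle; the lemma is simply the specialization of~(\ref{eqn:tw}) to functions fixed by $x$, and no further input from Hypothesis~\ref{hyp:tw} (beyond the meaning of $B$, $q_{x,i}$, and the action) is required.
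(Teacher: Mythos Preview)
Your proof is correct and follows exactly the same approach as the paper, which simply states that the lemma is immediate from equation~(\ref{eqn:tw}). Your additional remark verifying that the $G$-action of $x\in P$ on $f$ coincides with the $P$-action used in~(\ref{eqn:tw}) is a helpful clarification, but the core argument is identical.
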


\begin{proof}
This is immediate from~(\ref{eqn:tw}).  
\end{proof}

Our next result is~\cite[Theorem 3.5]{Bad1993}, which characterises when a twisted wreath product is primitive. 

\begin{thm}
\label{thm:primitivetwr}
\label{thm:twmax}
Assume Hypothesis~\emph{\ref{hyp:tw}}. Let  $M\unlhd P$ where $MU=MV$.  Let $U':=U\cap M$ and $V':=V\cap M$. Then  $G$ is primitive on $B$  if and only if   the following three conditions hold.
\begin{itemize}
\item[(i)] $V'/U'\simeq T$.

\item[(ii)] $Q=N_P(U')\cap N_P(V')$.

\item[(iii)] If $R$ is a subgroup of $M$ normalised by $Q$ for which $R\cap V'=U'$, then $R=U'$. 
\end{itemize}
\end{thm}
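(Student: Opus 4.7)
The plan is to use the classical criterion that a transitive permutation group is primitive if and only if a point stabiliser is maximal. Since $G$ is transitive on $B$ with $G_e = P$, I need to show that conditions (i)--(iii) characterise the maximality of $P$ in $G$. Because $B$ is a regular normal complement to $P$ in $G$, every subgroup $H$ with $P \leq H \leq G$ has the form $H = (H \cap B)P$, with $H \cap B$ a subgroup of $B$ normalised by $P$ under conjugation. The map $H \mapsto H \cap B$ is therefore a bijection between subgroups lying strictly between $P$ and $G$ and proper non-trivial $P$-invariant subgroups of $B$. So $P$ is maximal in $G$ if and only if the only $P$-invariant subgroups of $B$ are $1$ and $B$, and it remains to translate this into conditions on $M$, $U'$ and $V'$.

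For the translation I would parametrise the proper non-trivial $P$-invariant subgroups $S$ of $B$ using the decomposition $B = T_1 \times \cdots \times T_k$ from~(\ref{eqn:top}) together with the twisting identity~(\ref{eqn:tw}). Since $P$ permutes the simple factors $T_i$ transitively, $S$ is determined by its projection onto $T_1 \cong T$ (a $Q$-invariant subgroup of $T$ under the action via $\varphi$) together with the \emph{gluing} of this data across the orbit of $P$ on the factors. The hypothesis $MU = MV$ ensures that $\varphi(M \cap V)$ covers $\Inn(T)$ modulo $\varphi(U \cap M)$, so that the section $V'/U'$ of $M$ captures the full $T$-content visible through $\varphi$. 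Concretely, the data of $S$ translates into the pair of $Q$-invariant subgroups $U' \leq V'$ of $M$ together with a $Q$-invariant subgroup $R$ of $M$ with $R \cap V' = U'$; the latter encodes the diagonal-style subdirect product sitting inside $T^k$ that produces $S$.

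Conditions (i)--(iii) then correspond precisely to the non-existence of any such proper non-trivial $P$-invariant subgroup of $B$: (i) demands that the section $V'/U'$ be a full copy of $T$, so that the framework can even detect a subgroup of $B$ of the correct type; (ii) requires that $Q$ be exactly the stabiliser of the pair $(U', V')$ in $P$, matching the stabiliser of $T_1$ under the conjugation action and so ensuring the gluing across the factors is consistent; and (iii) is a minimality condition ruling out any proper $Q$-invariant $R$ strictly between $U'$ and $V'$ (with $R \cap V' = U'$) that would otherwise reconstruct into a non-trivial proper $P$-invariant subgroup of $B$. The main obstacle will be the parametrisation in the middle paragraph: one must carefully keep track of the twist $\varphi$ and the transversal $L$ to show that any $P$-invariant $S \leq B$ is uniquely and faithfully encoded by a triple $(M, U', V')$ together with an auxiliary $Q$-invariant $R \leq M$, and that every such encoding yields a valid $P$-invariant subgroup. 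This is where the subtleties of subdirect products sitting inside $B \cong T^k$ arise, and where the hypothesis $MU = MV$ plays its essential role.
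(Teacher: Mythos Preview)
The paper does not supply a proof of this theorem at all: it is quoted verbatim as \cite[Theorem~3.5]{Bad1993} and used as a black box (see the sentence immediately preceding the statement, and the remark after it that one may always take $M=P$). So there is no in-paper argument to compare your proposal against.

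Your overall framework is the natural one and is indeed the starting point of Baddeley's argument: primitivity of $G$ on $B$ is equivalent to maximality of $P=G_e$ in $G$, and since $B$ is a regular normal complement to $P$, the overgroups of $P$ in $G$ are in bijection with the $P$-invariant subgroups of $B$ via $H\mapsto H\cap B$. So far so good.

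Where your sketch drifts is in the handling of $M$. In the theorem, $M$ is a \emph{fixed} normal subgroup of $P$ satisfying $MU=MV$, chosen in advance; $U'=U\cap M$ and $V'=V\cap M$ are then determined by that choice. The assertion is that, for \emph{any} such $M$, conditions (i)--(iii) are equivalent to primitivity of $G$. Your second and fourth paragraphs instead treat $M$ as data extracted from a putative $P$-invariant subgroup $S\leq B$ (``$S$ is uniquely and faithfully encoded by a triple $(M,U',V')$\dots''), which inverts the logic of the statement and would not, as written, yield the claimed equivalence for an arbitrary admissible $M$. Relatedly, your gloss on (iii) as ruling out $R$ ``strictly between $U'$ and $V'$'' is not quite right: the condition $R\cap V'=U'$ forces $U'\leq R$ but says $R$ meets $V'$ only in $U'$, so such $R$ are complements to $V'/U'$ inside $M/U'$ rather than intermediate subgroups. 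These are the subgroups that manufacture genuine diagonal-type $P$-invariant subgroups of $B$, which is the heart of Baddeley's parametrisation; getting this correspondence precise (and showing it is exhaustive for every admissible $M$) is exactly the ``subtlety'' you flag, and it is not something one can wave through.
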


Note that we can always take the group $M$ in Theorem \ref{thm:twmax} to be $P$ itself, so this result does indeed  characterise when a twisted wreath product is primitive on its base group. 

One necessary condition of primitivity that is not immediate from Theorem~\ref{thm:twmax} is that $P$ acts faithfully on $\bm{k}$ (i.e., $\core_P(Q)=1$). This was first proved in~\cite[Case~2(a)]{LiePraSax1988}. We record this result below, along with a useful consequence of Theorem~\ref{thm:twmax}. 

\begin{lemma}
\label{lemma:twmax}
If Hypothesis~\emph{\ref{hyp:tw}} holds and  $G$ is primitive on $B$, then $\core_P(Q)=1$ and $\Inn(T)$ is a subgroup of  $\varphi(Q)$. 
\end{lemma}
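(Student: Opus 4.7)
For the inclusion $\Inn(T)\leq\varphi(Q)$, I would apply Theorem~\ref{thm:twmax} with $M=P$: then $U'=U$, $V'=V$, and the hypothesis $MU=MV$ is vacuous. Condition~(i) of that theorem asserts $V/U\simeq T$, while $\varphi$ induces an isomorphism $V/U\to\varphi(V)=\varphi(Q)\cap\Inn(T)$. Hence $\varphi(Q)\cap\Inn(T)$ is a subgroup of $\Inn(T)\simeq T$ of order $|T|$, and so equals $\Inn(T)$.

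For $\core_P(Q)=1$, set $N=\core_P(Q)$ and suppose for contradiction that $N\neq 1$. Primitivity of $G$ on $B$ is equivalent to $P=G_e$ being maximal in $G$. Since $N\unlhd P$, the centraliser $C_B(N)\leq B$ is normalised by $P$, so $P\cdot C_B(N)$ is a subgroup of $G$ containing $P$; maximality then forces $C_B(N)\in\{1,B\}$. If $C_B(N)=B$ then $N\leq C_G(B)=1$, contradicting $N\neq 1$, so $C_B(N)=1$. Now $\varphi(N)\unlhd\varphi(Q)$, and using the inclusion just established, $\varphi(N)\cap\Inn(T)$ is a normal subgroup of $\Inn(T)\simeq T$, hence equals $1$ or $\Inn(T)$. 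In the case $\varphi(N)\cap\Inn(T)=1$, the commutator $[\varphi(N),\Inn(T)]$ is contained in the intersection and so is trivial, forcing $\varphi(N)\leq C_{\Aut(T)}(\Inn(T))=1$ (as $T$ is centreless); then $N\leq U\leq V$, and normality of $N$ in $P$ gives $N\leq\core_P(V)=1$, again contradicting $N\neq 1$.

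The remaining subcase $\Inn(T)\leq\varphi(N)$ is the main obstacle. If the equality $\varphi(N)=\Inn(T)$ holds then $N\leq V$ and the same contradiction follows, so only $\varphi(N)\supsetneq\Inn(T)$ must be excluded. In this situation, $\Inn(T)\leq\varphi(N)$ quickly yields $V=(N\cap V)U$, whence $NU=NV$ and Theorem~\ref{thm:twmax} may be reapplied with $M=N$. Conditions~(ii) and~(iii) of that theorem then impose sharp constraints on the $Q$-invariant subgroups of $N$ between $N\cap U$ and $N\cap V$, and together they rule out any proper extension of $\Inn(T)$ inside $\varphi(N)$. This is the classical argument of Liebeck--Praeger--Saxl~\cite[Case~2(a)]{LiePraSax1988}, and it is at this step that the bulk of the technical effort resides.
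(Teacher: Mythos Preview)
Your argument for $\Inn(T)\leq\varphi(Q)$ is correct and is exactly what the paper does: apply Theorem~\ref{thm:twmax} with $M=P$ to get $V/U\simeq T$, whence $\varphi(V)=\Inn(T)\leq\varphi(Q)$.

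For $\core_P(Q)=1$ the paper takes a much shorter route. It simply cites~\cite[Theorem~4.7B]{DixMor1996} to conclude that the point stabiliser $P=G_e$ of the primitive group $G$ acts faithfully by conjugation on the simple direct factors of the non-abelian socle $B$; by~(\ref{eqn:top}) this is precisely the action of $P$ on $\bm{k}$, so its kernel $\core_P(Q)$ is trivial.

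Your attempted direct argument, by contrast, does not reach the conclusion. The dichotomy $C_B(N)\in\{1,B\}$ is correct, and the case $C_B(N)=B$ is disposed of via $C_G(B)=1$; but the surviving fact $C_B(N)=1$ is never used again. The subcases $\varphi(N)\cap\Inn(T)=1$ and $\varphi(N)=\Inn(T)$ are handled cleanly (both force $N\leq V$, hence $N\leq\core_P(V)=1$). The genuine gap is the remaining subcase $\varphi(N)\supsetneq\Inn(T)$: you assert that conditions~(ii) and~(iii) of Theorem~\ref{thm:twmax} with $M=N$ ``rule out any proper extension of $\Inn(T)$ inside $\varphi(N)$'', but no mechanism is given, and it is not apparent how those conditions produce a contradiction here. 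Your fallback is to invoke~\cite[Case~2(a)]{LiePraSax1988}, but that reference already proves $\core_P(Q)=1$ outright, rendering the entire preceding case analysis redundant. Either cite the result directly (as the paper does, via Dixon--Mortimer) or actually supply the argument for the hard subcase.
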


\begin{proof}
By~\cite[Theorem~4.7B]{DixMor1996}, the action of $P$ by conjugation on the $k$ simple factors of $B$ is faithful, so $P$ is faithful on $\bm{k}$ by~(\ref{eqn:top}). In other words,  $\core_P(Q)=1$. Taking $M$ to be $P$ in Theorem \ref{thm:twmax}, we see that  $T\simeq V/U\simeq \varphi(V)\leq \Inn(T)$, so $\Inn(T)=\varphi(V)\leq  \varphi(Q)$. 
\end{proof}

Primitive permutation groups of twisted wreath type are therefore  quite rare. For example, Lemma~\ref{lemma:twmax} implies that $T$ is a section of the symmetric group $S_k$, so there are only finitely many primitive groups of twisted wreath type for each $k$. 

Assuming only the necessary conditions for primitivity that are given by Lemma~\ref{lemma:twmax}, we now determine the possibilities for $T$ when $P$ is  $S_k$ or  $A_k$. 

\begin{lemma}
\label{lemma:twSkAk}
Assume Hypothesis~\emph{\ref{hyp:tw}}. If $ \Inn(T)\leq \varphi(Q)$ and $P$ is $S_k$ or $A_k$, then $T\simeq A_{k-1}$ and $k\geq 6$.
\end{lemma}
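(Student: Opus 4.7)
My plan is to show that $V := \varphi^{-1}(\Inn(T))$ is a normal subgroup of $Q$ with $V/U \simeq T$, and then to apply this observation to the very short list of normal subgroups of $Q$. The crucial point is that since $T$ is non-abelian simple, $\Inn(T) \simeq T$ is normal in $\Aut(T)$; combined with the hypothesis $\Inn(T) \leq \varphi(Q)$, this gives $\Inn(T) \unlhd \varphi(Q)$, and pulling back along $\varphi$ shows that $V$ is normal in $Q$. Restricting $\varphi$ to $V$ and using $U \leq V$ then yields $V/U \simeq \varphi(V) = \Inn(T) \simeq T$, so $V$ is a normal subgroup of $Q$ admitting the non-abelian simple quotient~$T$.

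Since $P$ acts on $\bm{k}$ as $S_k$ or $A_k$ and $Q$ is the stabiliser of $1 \in \bm{k}$, we have $Q \simeq S_{k-1}$ or $A_{k-1}$ respectively. If $k \leq 5$, then $Q$ embeds in $S_4$, which is soluble; every normal subgroup of $Q$ and every quotient thereof is therefore soluble, contradicting that $V/U \simeq T$ is non-abelian simple. Hence $k \geq 6$.

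For $k \geq 6$ the group $A_{k-1}$ is simple, so the normal subgroups of $S_{k-1}$ are $1$, $A_{k-1}$ and $S_{k-1}$, and those of $A_{k-1}$ are $1$ and $A_{k-1}$. The case $V = 1$ is immediately excluded, and if $V = S_{k-1}$ then the possible values of $V/U$ are $S_{k-1}$, $C_2$ and $1$, none of which is non-abelian simple. The only remaining candidate is $V = A_{k-1}$, and then $U \in \{1, A_{k-1}\}$; since $V/U \simeq T$ is non-trivial we must have $U = 1$, and so $T \simeq A_{k-1}$, as required. I do not anticipate any serious obstacle in this argument: once the normality of $V$ in $Q$ has been established, the result is a direct consequence of the very restricted normal subgroup lattices of $S_{k-1}$ and $A_{k-1}$.
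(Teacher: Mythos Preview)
Your proof is correct and follows essentially the same approach as the paper. The paper's version is a touch more streamlined: it observes directly that $\varphi(Q)$ is nonsoluble (since it contains $\Inn(T)$), so $\ker(\varphi)$ must be trivial and $k\geq 6$, and then reads off $T\simeq A_{k-1}$ from $\Inn(T)\unlhd\varphi(Q)\simeq Q$; you instead pull everything back to $Q$ via $V$ and enumerate the normal subgroup lattice, but the underlying idea is identical.
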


\begin{proof}
Observe that  $Q$ is  $S_{k-1}$ or $A_{k-1}$. Since $\varphi(Q)$ is not soluble and $\ker(\varphi)\unlhd Q$, it follows that $\varphi$ is injective and  $k\geq 6$. Since $\Inn(T)\unlhd \varphi(Q)$, we conclude that $T\simeq A_{k-1}$.
\end{proof}

Note that for any $k\geq 6$, there is a primitive permutation group of twisted wreath type with socle $A_{k-1}^k$ and top group $S_k$ or $A_k$ (see Example~\ref{exp:AS}). 

The following basic result provides a useful method for constructing examples of quasiprimitive and primitive  groups of twisted wreath type.

\begin{lemma}
\label{lemma:QPprod}
Let $H$ be a quasiprimitive permutation group on $\Delta$ of twisted wreath type with socle $A\simeq T^m$, where $T$ is a simple group. Let $K$ be a transitive subgroup of $S_r$ where $r\geq 2$. Then $G:=H\wr K$ is a quasiprimitive permutation group on $\Omega:=\Delta^r$ of twisted wreath type with socle $B:=A^r$. Further,  $G$ is primitive on $\Omega$ if and only if $H$ is primitive on $\Delta$.
\end{lemma}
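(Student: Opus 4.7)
The plan is to verify in turn: that $B = A^r$ is a regular normal subgroup of $G$ on $\Omega = \Delta^r$; that $B$ is the unique minimal normal subgroup of $G$, so that Theorem~\ref{thm:twONan} identifies $G$ as quasiprimitive of twisted wreath type with socle $B$; and finally that primitivity of $G$ on $\Omega$ is equivalent to primitivity of $H$ on $\Delta$.  The first claim is immediate: $A = \soc(H)$ is characteristic in $H$, so $A^r = \soc(H^r)$ is characteristic in $H^r \unlhd G$ and hence normal in $G$, while regularity of $B$ on $\Omega$ follows coordinatewise from regularity of $A$ on $\Delta$.

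The heart of the argument is that $B$ is the unique minimal normal subgroup of $G$.  I would first establish $C_G(B) = 1$: writing a centralizing element as $g = hk$ with $h = (h_1,\ldots,h_r) \in H^r$ and $k \in K$, a direct computation of $b^g$ for $b = (b_1,\ldots,b_r) \in B$ forces $k$ to fix $\bm{r}$ pointwise and each $h_i$ to lie in $C_H(A)$; the latter is trivial because $C_H(A) \unlhd H$ meets $A$ in $Z(A) = 1$ and $A$ is the unique minimal normal subgroup of $H$, so $g = 1$.  It follows that every nontrivial $N \unlhd G$ satisfies $N \cap B \neq 1$, since otherwise $[N,B] \leq N \cap B = 1$ would place $N$ in $C_G(B) = 1$; in particular, every minimal normal subgroup of $G$ is contained in $B$.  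Now let $M$ be any nontrivial normal subgroup of $G$ contained in $B = T^{mr}$.  Since $M \unlhd B$, the standard fact that the normal subgroups of a direct power of a non-abelian simple group are exactly the products of subsets of simple factors gives $M = \prod_{j \in J} T_j$ for some $J \subseteq \{1,\ldots,mr\}$.  The set $J$ is $G$-invariant; as $K$ acts transitively on the $r$ copies of $A$ in $B$ and the top group of $H$ acts transitively on the $m$ simple factors of $A$ (since $A$ is minimal normal in $H$), the $G$-action on the $mr$ simple factors is transitive, and so $J = \{1,\ldots,mr\}$, forcing $M = B$.  Thus $B$ is minimal normal, and the intersection argument above makes it unique; since $B \simeq T^{mr}$ is non-abelian, non-simple and regular, Theorem~\ref{thm:twONan} yields that $G$ is quasiprimitive of twisted wreath type with socle $B$.

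For the primitivity equivalence, I would invoke the standard criterion for the product action.  If $H$ is imprimitive on $\Delta$ with a nontrivial block system $\{B_1,\ldots,B_s\}$, then the products $B_{i_1} \times \cdots \times B_{i_r}$ form a nontrivial $G$-invariant block system on $\Omega$, so $G$ is imprimitive.  Conversely, $H$ is non-regular on $\Delta$ (otherwise $H = A$ and the simple factors of $A$ would be intransitive normal subgroups of $H$, contradicting quasiprimitivity of $H$), so the standard product-action primitivity theorem, applied together with the transitivity of $K$ on $\bm{r}$, gives that primitivity of $H$ on $\Delta$ implies primitivity of $G$ on $\Omega$.  The main technical obstacle is the structural analysis in the middle paragraph, which combines the centralizer computation with the transitivity of $G$ on the $mr$ simple factors of $B$ and the description of normal subgroups of a direct product of non-abelian simple groups.
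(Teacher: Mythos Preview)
Your proof is correct and follows essentially the same route as the paper: establish that $B$ is regular, that $C_G(B)=1$, and that $G$ acts transitively on the simple factors of $B$, whence $B$ is the unique minimal normal subgroup and Theorem~\ref{thm:twONan} applies; then invoke the standard product-action primitivity criterion. The only differences are in presentation: the paper deduces uniqueness of $B$ as a minimal normal subgroup directly from $C_G(B)=1$ (distinct minimal normal subgroups commute), whereas you pass through the intersection argument $N\cap B\neq 1$; and for the primitivity equivalence the paper simply cites~\cite[Lemma~2.7A]{DixMor1996}, while you sketch the argument (correctly noting that $H$ cannot be regular).
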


\begin{proof}
The socle $A$ of $H$ is non-abelian, non-simple, regular and the unique minimal normal subgroup of $H$. Thus   $B$ is regular on $\Omega$. Further, $C_H(A)=1$, so $C_G(B)=1$.  Since $H$ acts transitively by conjugation on the simple factors of $A$ and $K$ is transitive on $\bm{r}$, it follows that $G$ acts transitively by conjugation on the simple factors of $B$, so  
 $B$ is a minimal normal subgroup of $G$. Thus $G$ is a quasiprimitive permutation group  of twisted wreath type with socle $B$. Lastly, $G$ is primitive if and only if $H$ is primitive by~\cite[Lemma~2.7A]{DixMor1996}.
\end{proof}

In fact, by a similar proof, if $G$ is any large subgroup of $H\wr S_r$ that contains $B$ (see~\S\ref{s:prelim}), then $G$ is a quasiprimitive permutation group of twisted wreath type with socle $B$. However, $G$ may not be primitive on $\Omega$ when $H$ is primitive on $\Delta$.  

Recall the definition of a blow-up from~\S\ref{s:prelim}.  As defined in~\cite{Bad1993}, a  primitive permutation group of twisted wreath type is said to be of \textit{minimal-twisted type} if it is not a blow-up of index $r\geq 2$ of any primitive permutation group of twisted wreath type. Baddeley gives a detailed analysis of these groups in~\cite[\S8--9]{Bad1993}. We state an explicit version of his theory that is suitable for our purposes. In fact, we give more detail than is necessary to highlight its potential for the further study of   base sizes of twisted wreath products (see Remark~\ref{remark:mintwisted}). 
First we need some  notation. Under Hypothesis~\ref{hyp:tw}, a subgroup $S$ of $G$ is \textit{balanced} if  $Q\leq  S\leq P$ and $\core_S(U)=\core_S(V)$. Let  $\mathcal{S}(G)$ denote the set of balanced subgroups of $G$.  Observe that $\mathcal{S}(G)$ is a partially ordered set under inclusion, and $P\in\mathcal{S}(G)$. Note that if $G$ is primitive on $B$, then by Lemma~\ref{lemma:twmax},  $Q\notin\mathcal{S}(G)$. The following result is a weaker version of~\cite[Theorems~8.3 and~9.12]{Bad1993}.
  
\begin{thm}
\label{thm:blowup}
Assume Hypothesis~\emph{\ref{hyp:tw}} where $G$ is primitive on $B$, so $\core_P(Q)=1$. Then one of the following holds.
\begin{itemize}
\item[(i)] $\mathcal{S}(G)=\{P\}$, and one of the following holds.  
\begin{itemize}
\item[(a)] $P$ is almost simple and quasiprimitive on $\bm{k}$. 
\item[(b)]  $P$ is primitive of diagonal type on $\bm{k}$, and $P$ has a unique minimal normal subgroup that is isomorphic to $T^\ell$ for some $\ell$.
\end{itemize}
\item[(ii)] $\mathcal{S}(G)\neq \{P\}$. Let $S$ be minimal in $\mathcal{S}(G)$. Let $\overline{P}:=S/\core_S(Q)$, $\overline{Q}:=Q/\core_S(Q)$ and $\overline{\varphi}:\overline{Q}\to \Aut(T)$ be the homomorphism induced by $\varphi$. 
Then $\overline{G}:= T\twr_{\overline{\varphi}} \overline{P}$ is primitive on  $\overline{B}:=\soc(\overline{G})$, and $\mathcal{S}(\overline{G})=\{\overline{P}\}$. 
Further, up to permutation isomorphism,
$$
\overline{B}^r\leq  G\leq \overline{G}\wr S_r,
$$
where $r:=[P:S]$, the wreath product  $\overline{G}\wr S_r$ acts  via the product action on   $\overline{B}^r$, and  the image of the projection map from $G$ to $S_r$ is transitive on $\bm{r}$.
\end{itemize}
\end{thm}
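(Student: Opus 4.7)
The plan is to organise the argument around the poset $\mathcal{S}(G)$ of balanced subgroups. First, observe that $P\in\mathcal{S}(G)$, since $U\leq V$ and $\core_P(V)=1$ imply $\core_P(U)\leq\core_P(V)=1=\core_P(V)$. Moreover, the subgroups $S$ of $P$ containing $Q$ are in order-preserving bijection with the block systems of $P$ on $\bm{k}$ (via $S\leftrightarrow S/Q\subseteq P/Q\cong\bm{k}$), so elements of $\mathcal{S}(G)$ correspond precisely to those block systems for which the ``balanced'' compatibility with $\varphi$ holds. This correspondence is the backbone of the argument: a non-trivial block system that yields a balanced subgroup is exactly what drives a blow-up decomposition of $G$.

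For case~(i), assume $\mathcal{S}(G)=\{P\}$. Then in particular no proper block system of $P$ on $\bm{k}$ is balanced, which---combined with Theorem~\ref{thm:twmax} applied to various choices of $M\unlhd P$ and with the constraint $\Inn(T)\leq\varphi(Q)$ from Lemma~\ref{lemma:twmax}---severely restricts the O'Nan--Scott type of the image $P^{\bm{k}}$. A case analysis against Table~\ref{tab:ONS} should show that $P$ cannot be of affine, product or twisted wreath type on $\bm{k}$ (each such case produces a proper balanced subgroup from its associated system of blocks), and that in the diagonal case the minimal normal subgroup is forced to be a product of copies of the same simple group $T$ appearing in $\soc(G)$. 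This leaves only the possibilities listed in (i)(a) and (i)(b).

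For case~(ii), let $S$ be a minimal element of $\mathcal{S}(G)$; by Lemma~\ref{lemma:twmax} we have $S\neq Q$, and by assumption $S\neq P$. The minimality of $S$ forces the equality $\core_S(Q)=\core_S(V)=\core_S(U)$, since any strict inclusion would, via the balanced condition applied inside $S$, produce a strictly smaller balanced subgroup. As a consequence $\core_S(Q)\leq U$, so the homomorphism $\overline{\varphi}:\overline{Q}\to\Aut(T)$ is well-defined, $\core_{\overline{P}}(\overline{V})=1$, and Hypothesis~\ref{hyp:tw} holds for $\overline{G}:=T\twr_{\overline{\varphi}}\overline{P}$. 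One then applies Theorem~\ref{thm:twmax} with $M:=\overline{P}$ to conclude that $\overline{G}$ is primitive on $\overline{B}$; the identity $\mathcal{S}(\overline{G})=\{\overline{P}\}$ follows from the correspondence theorem combined with the minimality of $S$ (a proper balanced subgroup of $\overline{G}$ would lift to one strictly between $Q$ and $S$).

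The blow-up structure is obtained by decomposing $B=A_1\times\cdots\times A_r$, where the factors $A_j$ (each isomorphic to $\overline{B}\simeq T^{k/r}$) are indexed by the $r=[P:S]$ right cosets of $S$ in $P$, equivalently by the blocks of the block system on $\bm{k}$ corresponding to $S$. The conjugation action of $G$ permutes the $A_j$ via its image in $S_r$ (transitively, since $P$ is transitive on $P/S$), and acts on each $A_j$ through a copy of $\overline{G}$, producing the required embedding $\overline{B}^r\leq G\leq \overline{G}\wr S_r$. The hardest step will be the case~(i) analysis: ruling out most O'Nan--Scott types of $P^{\bm{k}}$ using the delicate balanced condition together with the hypothesis that $\mathcal{S}(G)=\{P\}$, and, in the diagonal subcase, pinning down that the minimal normal subgroup must involve the same simple group $T$ as in $\soc(G)$.
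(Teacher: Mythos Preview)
Your proposal outlines a self-contained proof, whereas the paper's proof is essentially an extraction from Baddeley~\cite{Bad1993}: case~(i) is obtained by citing \cite[Theorems~8.3(3) and~9.12]{Bad1993} and \cite[Proposition~9.2(2)]{Bad1993}; in case~(ii) the identity $\core_S(Q)=\core_S(V)=\core_S(U)$ is \cite[Proposition~8.4]{Bad1993}, and the blow-up embedding is \cite[Theorem~8.3(2)]{Bad1993}. The paper then deduces primitivity of $\overline{G}$ \emph{a posteriori} from the blow-up via Lemma~\ref{lemma:QPprod}, rather than verifying Theorem~\ref{thm:twmax} for $\overline{G}$ directly as you suggest.

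Your plan has real gaps. In case~(ii), the claim that minimality of $S$ forces $\core_S(Q)=\core_S(U)$ is not the triviality you suggest: from $U\leq V\leq Q$ one only gets $\core_S(U)\leq\core_S(V)\leq\core_S(Q)$, and balance gives the first equality; the second requires a genuine argument using primitivity of $G$ (this is exactly the content of \cite[Proposition~8.4]{Bad1993}, and your sentence ``any strict inclusion would \dots\ produce a strictly smaller balanced subgroup'' does not supply it). Likewise, the embedding $G\leq\overline{G}\wr S_r$ is not just ``acts on each $A_j$ through a copy of $\overline{G}$'': one must identify $\overline{G}$ with the quotient $G_{B(S)}/\core_S(Q)$ acting on the appropriate factor and check that $G$ is \emph{large} in the sense of blow-ups; the paper does this explicitly before invoking \cite[Theorem~8.3(2)]{Bad1993}.

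The most serious gap is case~(i). Your sketch (``a case analysis against Table~\ref{tab:ONS} should show \dots'') is precisely the hard content of \cite[\S9]{Bad1993}, and it is not at all clear from what you wrote how the balanced condition alone eliminates the other O'Nan--Scott types or, in the diagonal case, forces the minimal normal subgroup to be a power of the \emph{same} $T$. Baddeley's argument passes through an explicit classification (Constructions~9.1 and~9.3) and is substantially more delicate than an appeal to Theorem~\ref{thm:twmax}. Unless you intend to reproduce that analysis, you should cite~\cite{Bad1993} as the paper does.
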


\begin{proof}
Suppose that $\mathcal{S}(G)=\{P\}$. Then $G$ is of minimal-twisted type by~\cite[Theorem~8.3(3)]{Bad1993}.  By~\cite[Theorem~9.12]{Bad1993}, $G$ is permutation isomorphic to a twisted wreath product 
$G'=T'\twr_{\varphi'} P'$  described  by~\cite[Constructions~9.1 or~9.3]{Bad1993}. In particular, the point stabiliser $Q'$ is core-free in $P'$, so $P'$ is permutation isomorphic to $P$. Thus either 
$P$ is  primitive  of diagonal type with a unique minimal normal subgroup  $T^\ell$ for some $\ell$, in which case (i)(b) holds, or $P$ is almost simple, in which case $P=\soc(P)Q$ by~\cite[Proposition~9.2(2)]{Bad1993}, so $\soc(P)$ is transitive and (i)(a) holds.

Thus we may assume that $S\in \mathcal{S}(G)$ is minimal, as in  (ii).  By~\cite[Proposition~8.4]{Bad1993},  $N:=\core_S(Q)=\core_S(V)=\core_S(U)$.
For any subset $X$ of $S$, let $\overline{X}:=\{Nx : x \in X\}$, and let $\overline{P}:=\overline{S}$, as in (ii). Now $\core_{\overline{P}}(\overline{Q})=1$, so we may define $\overline{G}$ as in (ii). 
Note that $\overline{V}=\overline{\varphi}^{-1}(\Inn(T))$ and $\overline{U}=\ker(\overline{\varphi})$. If $\overline{R}\in \mathcal{S}(\overline{G})$, then $\overline{Q}\leq \overline{R}\leq \overline{S}$ and $\core_{\overline{R}}(\overline{U})=\core_{\overline{R}}(\overline{V})$, so  $R\in \mathcal{S}(G)$ and $R\leq S$. Thus $R=S$, so $\mathcal{S}(\overline{G})=\{\overline{P}\}$.

Since $S$ contains the stabiliser $Q=P_1$, it follows that $S=P_I$ where $I$ is the block $1^S$ of $P$. In particular, the set  
$A:=\{a_i:i\in I\}$ is a left transversal for $Q$ in $S$ with the property that $i^{a_i}=1$ for $i\in I$ (see Hypothesis~\ref{hyp:tw}). Further, $\overline{A}$ is a left transversal for $\overline{Q}$ in $\overline{P}$. Thus 
$$\overline{B}\simeq T^{\overline{A}}\simeq T^A\simeq \{f\in B : f(x)=1\ \mbox{for}\ x\in P\setminus S\}=:B(S).$$ 
Note that by~(\ref{eqn:tw}),  $C_{S}(B(S))=N$  and $G_{B(S)}=B(S){:}S$. It is then routine to verify that $\overline{G}=\overline{B}{:}\overline{P}$ is permutation isomorphic to $G_{B(S)}/N$ in its  action on $B(S)$.  By the proof of~\cite[Theorem~8.3(2)]{Bad1993},  $G$ is a blow-up of $G_{B(S)}/N$ of index $r$, so $G$ is a blow-up of $\overline{G}$ of index $r$.
Thus $\overline{G}$ is primitive by Lemma~\ref{lemma:QPprod}, and  (ii) holds. 
\end{proof}

\begin{remark}
We make several observations concerning Theorem~\ref{thm:blowup}.
\begin{enumerate}
\item By Theorem~\ref{thm:blowup}, its proof and  Theorem~\ref{thm:twONan}, a primitive permutation group $G$  of twisted wreath type with top group $P$ is of minimal-twisted type if and only if $\mathcal{S}(G)=\{P\}$. 
\item If the top group of a primitive group $G$ of twisted wreath type is primitive, then certainly $G$ is of minimal-twisted type, but the converse is not true (see~\cite[Example~9.8]{Bad1993}). 
\item The groups $P$ described in (i)(b) are precisely the primitive groups of type III(a)(i) in Table~\ref{tab:ONS}. In fact, a primitive twisted wreath product can be constructed using any quasiprimitive $P$ in class III(a)(i) 
(see~Example~\ref{exp:diag}), and  every 
primitive twisted wreath product satisfying (i)(b) arises in this way (see the proof of Theorem~\ref{thm:blowup} and \cite[Construction~9.3]{Bad1993}). Thus the minimal-twisted type  groups whose top groups are primitive of diagonal type are well understood. 
We also conclude that there are  primitive twisted wreath products  that are not of minimal-twisted type but have quasiprimitive top groups.
\item 
Much less is known about the primitive twisted wreath products satisfying (i)(a); see~\cite[Construction~9.1 and \S10]{Bad1993}. In Example~\ref{exp:AS}, we show that for any non-abelian simple group $T$, there is a minimal-twisted type group with socle $T^k$ for some $k$ whose top group is almost simple and primitive. See~\cite[\S9]{Bad1993} for more examples. 
\item The set $\mathcal{S}(G)$ may contain more than one minimal element; see~\cite[Example~9.11]{Bad1993}. 
\end{enumerate}
\end{remark}

\section{Probabilistic results}
\label{s:prob}

In this section, we prove the following result, and then we use this result to prove Theorem~\ref{thm:prob}. Some of the methods in this section will also be used to prove Theorem~\ref{thm:QP}. 

\begin{thm}
\label{thm:primprobgeneral}
Suppose that Hypothesis~\emph{\ref{hyp:tw}} holds  and   $\core_P(Q)=1$. If $P$ is primitive on $\bm{k}$, and if  $\Inn(T)\leq \varphi(Q)$ when $P$ is $S_k$ or $A_k$, then the proportion of pairs of points from $B$ that are bases for $G$ tends to 1 as $|G|\to \infty$.
\end{thm}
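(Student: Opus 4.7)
The plan is to bound the probability that a uniformly random pair from $B\times B$ fails to be a base. Since $G$ is transitive on $B$ with $G_e=P$, the pair $(e,f)$ is a base if and only if $\fix_P(f) = P\cap G_f = 1$, and by transitivity the proportion of base pairs equals the proportion of $f\in B$ with $\fix_P(f)=1$. A union bound reduces the task to showing
$$
S:=\frac{1}{|B|}\sum_{p\in P\setminus\{1\}}|\fix_B(p)| \longrightarrow 0 \quad\text{as } |G|\to\infty.
$$

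To bound $|\fix_B(p)|$, I would iterate Lemma~\ref{lemma:ideal} around each orbit $\mathcal{O}$ of $\langle p\rangle$ on $\bm{k}$. Choosing a base point $i_0\in\mathcal{O}$ of length $\ell$, the values $f(a_{i_0^{p^j}})$ for $1\leq j\leq \ell-1$ are determined by $f(a_{i_0})$, which in turn must lie in $\fix_T(\alpha_{\mathcal{O}}(p))$, where telescoping yields
$$
\alpha_{\mathcal{O}}(p):=\varphi\!\Big(\prod_{j=0}^{\ell-1}q_{p,i_0^{p^j}}\Big)=\varphi(a_{i_0}^{-1}p^\ell a_{i_0})\in\Aut(T).
$$
Hence $|\fix_B(p)|=\prod_{\mathcal{O}}|\fix_T(\alpha_{\mathcal{O}}(p))|\leq |T|^{\omega(p)}$. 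Writing $m(p)$ for the number of points of $\bm{k}$ moved by $p$ and $c(p)$ for the number of non-trivial orbits of $\langle p\rangle$, the inequality $c(p)\leq m(p)/2$ gives $k-\omega(p)=m(p)-c(p)\geq m(p)/2$, whence $|\fix_B(p)|/|B|\leq |T|^{-m(p)/2}$; and since $\core_P(Q)=1$, every $p\neq 1$ satisfies $m(p)\geq\mu(P)\geq 2$, where $\mu(P)$ is the minimal degree of $P$ on $\bm{k}$.

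Next I would estimate $S$ by case analysis as $|G|=|T|^k|P|\to\infty$. If $P\in\{A_k,S_k\}$, then Lemma~\ref{lemma:twSkAk} and the hypothesis force $T\simeq A_{k-1}$ and $k\geq 6$; here $|G|\to\infty$ forces $k\to\infty$, so $|T|=(k-1)!/2$ grows factorially. Since at most $\binom{k}{m}m!\leq k^m$ elements of $P$ move exactly $m$ points, one gets
$$
S\leq \sum_{m\geq 2} k^m|T|^{-m/2}=\sum_{m\geq 2}\bigl(2k^2/(k-1)!\bigr)^{m/2},
$$
a geometric-type sum tending to $0$. If instead $P\neq A_k,S_k$, then the classical bounds $\mu(P)\geq (\sqrt{k}-1)/2$ (Babai) and $|P|\leq k^{1+\log_2 k}$ (Mar\'oti) yield
$$
S\leq |P|\cdot |T|^{-\mu(P)/2}\leq k^{1+\log_2 k}\cdot |T|^{-(\sqrt{k}-1)/4},
$$
which tends to $0$ whenever $k\to\infty$ (since $|T|\geq 60$); and if $k$ is bounded, then $|P|$ is bounded, $|T|\to\infty$, and the crude bound $S\leq |P|\cdot|T|^{-1}\to 0$ suffices.

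The main obstacle is the $\{A_k,S_k\}$ case, where the minimal degree is only $2$ or $3$ so the decay per term is merely $|T|^{-1}$ or $|T|^{-3/2}$; here the argument depends crucially on the factorial growth of $|T|=(k-1)!/2$ dominating the polynomial count of low-degree permutations. The other cases follow immediately from standard bounds on primitive groups.
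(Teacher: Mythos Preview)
Your overall strategy---reducing to a union bound over non-identity elements of $P$ and controlling $|\fix_B(p)|$ via the orbit count $\omega_{\bm{k}}(p)$---is exactly the paper's approach (it is Lemma~\ref{lemma:form} with $b=2$), and your treatment of the $A_k/S_k$ case is essentially correct. The gap is in the ``$P\neq A_k,S_k$'' case. Mar\'oti's bound $|P|\leq k^{1+\log_2 k}$ is \emph{not} valid for all such primitive $P$: the product-action groups $A_m^r\leq P\leq S_m\wr S_r$ on $k=\binom{m}{\ell}^r$ points (with $r\geq 2$ or $\ell\geq 2$) are explicit exceptions in Mar\'oti's theorem, and they violate the inequality. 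For instance, take $P=S_m\wr S_2$ on $k=m^2$ points; here $|P|=2(m!)^2$ grows like $\exp(2m\log m)$, while $k^{1+\log_2 k}$ grows only like $\exp(c\log^2 m)$. Worse, your crude estimate $S\leq |P|\cdot|T|^{-\mu(P)/2}$ genuinely fails for these groups: with $\mu(P)=2m$ and $|T|=60$ one gets $2(m!)^2\cdot 60^{-m}\to\infty$ as $m\to\infty$. So the argument as written does not cover an infinite family of primitive $P$.

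The paper avoids this by invoking \cite[Lemma~4.4]{Faw2013}, whose proof uses a finer stratification than ``bound every term by the worst one''. As the remark following the proof indicates, the relevant ingredients are: (a) a transposition forces $P=S_k$; (b) $|P|\leq 4^k$ (Praeger--Saxl); (c) $|P|\leq\exp(4\sqrt{k}\log^2 k)$ for large $k$ (Babai); and crucially (d) Liebeck--Saxl's dichotomy that either $\mu(P)\geq k/3$ or $P$ is one of the product-action groups, which must then be handled separately. To repair your argument you would need an analogous case split, treating the product-action exceptions on their own; the single pair of ``off-the-shelf'' bounds you quote is not enough.
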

 
Note that the conclusion of  Theorem~\ref{thm:primprobgeneral} does not hold if we remove the assumption  that $\Inn(T)\leq \varphi(Q)$ when $P$ is $S_k$ or $A_k$; see Example~\ref{exp:Sktop}. 

First we need some notation. For a  transitive permutation group $G$ on $\Omega$ and an integer $b\geq 1$, let $Q(G,b)$ denote the proportion of $b$-tuples in $\Omega^b$ that are not (ordered) bases for $G$. The following observation was made by Liebeck and Shalev~\cite[p.502]{LieSha1999}.

\begin{lemma}
\label{lemma:prob}
Let $G$ be a  transitive permutation group on $\Omega$. For any integer $b\geq 1$ and $\alpha\in \Omega$,
$$ Q(G,b)\leq \sum_{x\in R} \frac{| x^G \cap G_\alpha|^b|C_G(x)|^{b-1}}{|G|^{b-1}},$$
 where $R$ is any set of representatives  for the $G$-conjugacy classes of prime order elements  in~$G_\alpha$.
\end{lemma}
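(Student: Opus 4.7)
The plan is to give a standard union-bound and fixed-point counting argument, in three steps; this is essentially the calculation implicit in \cite{LieSha1999} and there is no substantive difficulty, only careful bookkeeping.

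First I would unpack what it means for a $b$-tuple to fail to be a base. A tuple $(\alpha_1,\ldots,\alpha_b)\in\Omega^b$ is not a base precisely when $\bigcap_i G_{\alpha_i}\neq 1$, and a non-trivial subgroup always contains an element of prime order. Hence each non-base tuple is fixed coordinatewise by at least one prime-order element of $G$, and a union bound gives
$$Q(G,b)\,|\Omega|^b \;\leq\; \sum_{x} |\fix_\Omega(x)|^b,$$
with the sum ranging over all prime-order $x\in G$, since a given $x$ fixes exactly $|\fix_\Omega(x)|^b$ tuples.

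Next I would group the sum by $G$-conjugacy class. Conjugate elements have the same fixed-point count, so the class $x^G$ contributes $|x^G|\cdot|\fix_\Omega(x)|^b = (|G|/|C_G(x)|)\,|\fix_\Omega(x)|^b$. For a transitive action, $|\fix_\Omega(x)|=0$ whenever $x^G\cap G_\alpha=\emptyset$, so those classes drop out and the sum can be restricted to the representatives $R$ described in the statement.

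Finally I would substitute the standard transitive-action identity
$$|\fix_\Omega(x)| \;=\; \frac{|x^G\cap G_\alpha|\cdot|C_G(x)|}{|G_\alpha|},$$
which follows by identifying $\Omega$ with the coset space $G_\alpha\backslash G$, counting the $g\in G$ with $gxg^{-1}\in G_\alpha$ as $|x^G\cap G_\alpha|\cdot|C_G(x)|$ (each $y\in x^G\cap G_\alpha$ is hit on a full coset of $C_G(x)$), and dividing by $|G_\alpha|$. Plugging this in and using $|G|=|\Omega|\,|G_\alpha|$, the factors of $|G_\alpha|^b$ and one factor of $|C_G(x)|$ cancel against $|G|/|C_G(x)|$, producing exactly the right-hand side in the statement. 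No step presents a genuine obstacle; the whole argument is a routine manipulation of orbit-counting identities that works uniformly for every $b\geq 1$.
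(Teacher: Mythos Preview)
Your argument is correct and is exactly the standard Liebeck--Shalev calculation. The paper does not give its own proof of this lemma; it merely attributes the observation to~\cite[p.~502]{LieSha1999} and states it without proof, so there is nothing further to compare.
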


In order to apply Lemma~\ref{lemma:prob} in the case where $G$ is a twisted wreath product, we require two basic results concerning conjugacy classes and centralisers of elements of $G$.

\begin{lemma}
\label{lemma:conjcent}
Under Hypothesis~\emph{\ref{hyp:tw}}, the following hold for $x\in P$.
\begin{itemize}
\item[(i)] $x^G\cap P=x^P$.
\item[(ii)] $C_G(x)=\{fy\in G: f\in \fix_B(x), y\in C_P(x) \}$.
\item[(iii)] $|C_G(x)|=|\fix_B(x)||C_P(x)|$. 
\end{itemize}
\end{lemma}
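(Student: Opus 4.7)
The plan is to exploit the unique factorisation of $G=B{:}P$: each element of $G$ is written uniquely as $fy$ with $f\in B$ and $y\in P$, and since $B\unlhd G$, conjugation by $P$ realises its right action on $B$, so $x^{-1}gx=g^x$ for $g\in B$ and $x\in P$. Equivalently, $xg=g^{x^{-1}}x$, which is the key identity for commuting $P$-elements past $B$-elements (and, by inverting, $y^{-1}h=h^{y}y^{-1}$ for $h\in B$ and $y\in P$).

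For part~(ii), I would fix $x\in P$ and $fy\in G$ and expand both sides of $(fy)x=x(fy)$. The left equals $f(yx)$ and the right equals $f^{x^{-1}}(xy)$; matching $B$- and $P$-components via unique factorisation gives $f=f^{x^{-1}}$ (so $f\in\fix_B(x)$) and $yx=xy$ (so $y\in C_P(x)$). The converse is a direct check, so (ii) follows. For part~(i), if $g=fy$ and $x^g\in P$, the same identities yield
\[
x^g=y^{-1}f^{-1}xfy=y^{-1}\bigl(f^{-1}f^{x^{-1}}\bigr)xy=\bigl(f^{-1}f^{x^{-1}}\bigr)^{y}\,x^{y}.
\]
Since $x^g\in P$ and the $B$-$P$ decomposition is unique, the $B$-part vanishes, forcing $f^{x^{-1}}=f$, and hence $x^g=x^y\in x^P$. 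Part~(iii) is then immediate: by (ii) and unique factorisation, the map $\fix_B(x)\times C_P(x)\to C_G(x)$, $(f,y)\mapsto fy$, is a bijection.

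There is no real obstacle here: the whole argument is a mechanical computation inside the semidirect product. The only point requiring care is consistent use of the right-action convention of Hypothesis~\ref{hyp:tw}, which fixes the exponent $x^{-1}$ in the commutation rule $xg=g^{x^{-1}}x$ and hence the precise ``twist'' appearing in the computation of $x^g$ above.
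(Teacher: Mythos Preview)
Your argument is correct and follows essentially the same route as the paper: both exploit the unique $B$-$P$ factorisation in the semidirect product together with the commutation rule $xf=f^{x^{-1}}x$, compute $x^{fy}=(f^{-1}f^{x^{-1}})^y\,x^y$ for (i), and compare the two sides of $(fy)x=x(fy)$ for (ii), deducing (iii) immediately. The only superfluous detail is your conclusion $f^{x^{-1}}=f$ in part~(i); once the $B$-component of $x^g$ vanishes you already have $x^g=x^y\in x^P$, which is all that is needed.
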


\begin{proof}
(i) Let  $y\in x^G\cap P$. Then 
$y=(fz)^{-1}x(fz)$ for some $f\in B$ and $z\in P$. Now  $(fz)^{-1}x(fz)=(f^{-1} f^{x^{-1}})^z z^{-1}xz$. Since $G=B{:}P$ and $y\in P$, we must have  $y=z^{-1}xz\in x^P$.

(ii) Let $f\in B$ and $y\in P$. Then $fy$ centralises $x$ if and only if $fyx=f^{x^{-1}}xy$, and this occurs precisely when $f\in \fix_B(x)$ and $y\in C_P(x)$.

(iii) This follows from (ii) since $G=B{:}P$.
\end{proof}

Recall from \S\ref{s:prelim} that for $x\in P$ where $P$ acts on $\bm{k}$, we denote the number of orbits of $\langle x\rangle$ on $\bm{k}$ by $\omega_{\bm{k}}(x)$.

\begin{lemma}
\label{lemma:fix}
Under Hypothesis~\emph{\ref{hyp:tw}}, $|\fix_B(x)|\leq |T|^{\omega_{\bm{k}}(x)}$ for all $x\in P$.
\end{lemma}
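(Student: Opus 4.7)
The plan is to use the description of $B$ via the transversal $L = \{a_1, \ldots, a_k\}$, which gives the identification $B \simeq T^L$ via the restriction map $f \mapsto (f(a_1), \ldots, f(a_k))$. Under this identification, I would use Lemma \ref{lemma:ideal} (equivalently, equation (\ref{eqn:tw})) to show that for $f \in \fix_B(x)$, the value of $f$ on any one orbit of $\langle x\rangle$ acting on $\bm{k}$ is determined by its value at a single chosen representative.

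First I would fix $x \in P$ and enumerate the orbits of $\langle x\rangle$ on $\bm{k}$ as $O_1, \ldots, O_s$ where $s = \omega_{\bm{k}}(x)$. For each orbit $O_j$, I would pick a base point $i_{j} \in O_j$ and write the orbit as $i_j, i_j^x, i_j^{x^2}, \ldots, i_j^{x^{\ell_j-1}}$ where $\ell_j = |O_j|$. Given $f \in \fix_B(x)$, Lemma \ref{lemma:ideal} yields
\[
f(a_{i_j^{x^{t+1}}}) \;=\; f(a_{i_j^{x^t}})^{\varphi(q_{x,\,i_j^{x^t}})}
\]
for each $t \geq 0$, so by induction, the values $f(a_{i_j^x}), f(a_{i_j^{x^2}}), \ldots, f(a_{i_j^{x^{\ell_j - 1}}})$ are all determined by $f(a_{i_j})$.

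Next I would note that, under the identification $B \simeq T^L$, an element $f \in \fix_B(x)$ is therefore completely determined by the $s$-tuple of values $\bigl(f(a_{i_1}), \ldots, f(a_{i_s})\bigr) \in T^s$. This gives at most $|T|^s = |T|^{\omega_{\bm{k}}(x)}$ possibilities for $f$, which is the desired bound. No compatibility analysis is needed for the upper bound; one simply observes that not every choice of base-point values need extend to a fixed point, but this only decreases the count.

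I do not foresee a genuine obstacle: the argument is essentially bookkeeping about how $x$ permutes the coordinates of $B \simeq T^k$ twisted by the automorphisms $\varphi(q_{x,i})$. The only minor care needed is to check that Lemma \ref{lemma:ideal} is being applied in the right direction (to propagate the value of $f$ along the cycle of $\langle x\rangle$), and that the identification $B \simeq T^L$ is a bijection of sets, as recorded in \S\ref{s:tw} immediately after the definition of the $T_i$.
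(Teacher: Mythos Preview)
Your proposal is correct and follows essentially the same approach as the paper: both arguments define an injection from $\fix_B(x)$ into $T^{\omega_{\bm{k}}(x)}$ by recording the values $f(a_{i_j})$ at chosen orbit representatives, using Lemma~\ref{lemma:ideal} to show these values determine $f$ completely. The only cosmetic difference is that you propagate along each $\langle x\rangle$-orbit one step at a time by induction, whereas the paper applies Lemma~\ref{lemma:ideal} directly with an arbitrary $y\in\langle x\rangle$ (which is valid since $\fix_B(x)\subseteq\fix_B(y)$).
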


\begin{proof}
Let $x\in P$. Without loss of generality, we may assume that the  $r:=\omega_{\bm{k}}(x)$  orbits of $\langle x\rangle$ on $\bm{k}$ have representatives  $1,\ldots,r$. 
Recall that  $\{a_1,\ldots,a_k\}$ is a left transversal for $Q$ in $P$. 
Define a map $\psi:\fix_B(x)\to T^r$ by $f\mapsto (f(a_1),\ldots,f(a_r))$. We claim that $\psi$ is injective. To this end, suppose that  $(f_1(a_1),\ldots,f_1(a_r))=(f_2(a_1),\ldots,f_2(a_r))$ for some $f_1,f_2\in \fix_B(x)$. Let $i\in \bm{k}$. Now  $i=j^y$ for some $j\in \bm{r}$ and $y\in \langle x\rangle$. By Lemma \ref{lemma:ideal},  $f(a_j)^{\varphi(q_{y,j})}=f(a_{j^y})$ for $f\in \{f_1,f_2\}$, so $$f_1(a_i)=f_1(a_{j^y})=f_1(a_j)^{\varphi(q_{y,j})}=f_2(a_j)^{\varphi(q_{y,j})}=f_2(a_{j^y})=f_2(a_i).$$ 
Thus $f_1=f_2$, and the claim holds. 
\end{proof}

Next we obtain a useful version of Lemma~\ref{lemma:prob} for twisted wreath products. 

\begin{lemma}
 \label{lemma:form}
 Assume Hypothesis~\emph{\ref{hyp:tw}}. 
For any integer $b\geq 1$, 
$$ Q(G,b)\leq \sum_{x\in R(P)}\frac{ |x^P|}{ |T|^{(b-1)(k-\omega_{\bm{k}}(x))}}, 
$$
where $R(P)$ is any set of representatives for the conjugacy classes of prime order  elements  in $P$.
\end{lemma}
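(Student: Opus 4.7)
The plan is to apply Lemma~\ref{lemma:prob} with the distinguished point $\alpha$ taken to be the identity element $e\in B$. Since $G = B{:}P$ acts with $G_e = P$, a set of representatives for the $G$-conjugacy classes of prime order elements in $G_e$ is simply a set of representatives for the $G$-conjugacy classes of prime order elements in $P$. Lemma~\ref{lemma:conjcent}(i) tells us that $x^G\cap P = x^P$ for every $x\in P$, so two prime order elements of $P$ are $G$-conjugate precisely when they are $P$-conjugate. Consequently, we may take the indexing set $R$ in Lemma~\ref{lemma:prob} to be any set $R(P)$ of representatives for the $P$-conjugacy classes of prime order elements of $P$, and moreover $|x^G\cap P| = |x^P|$ in each summand.

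Next I would simplify the centraliser factor using Lemma~\ref{lemma:conjcent}(iii), which gives $|C_G(x)| = |\fix_B(x)||C_P(x)|$ for any $x\in P$. Combining this with $|G| = |T|^k|P|$ and the orbit-stabiliser identity $|x^P||C_P(x)| = |P|$, one obtains the telescoping
\begin{equation*}
\frac{|x^P|^b |C_G(x)|^{b-1}}{|G|^{b-1}}
= \frac{|x^P|^b|\fix_B(x)|^{b-1}|C_P(x)|^{b-1}}{|T|^{k(b-1)}|P|^{b-1}}
= \frac{|x^P|\,|\fix_B(x)|^{b-1}}{|T|^{k(b-1)}}.
\end{equation*}
Thus all factors of $|P|$ and of the centraliser orders cancel, leaving a summand that depends only on the size of the $P$-conjugacy class of $x$ and the size of its fixed point set on $B$.

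Finally, I would invoke Lemma~\ref{lemma:fix} to bound $|\fix_B(x)|\leq |T|^{\omega_{\bm{k}}(x)}$, which yields
\begin{equation*}
\frac{|x^P|\,|\fix_B(x)|^{b-1}}{|T|^{k(b-1)}} \leq \frac{|x^P|}{|T|^{(b-1)(k-\omega_{\bm{k}}(x))}}.
\end{equation*}
Summing over $R(P)$ then gives the asserted inequality.

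There is no real obstacle here: the proof is essentially a direct substitution of the three prior lemmas into Lemma~\ref{lemma:prob}, followed by bookkeeping to cancel $|P|$ and $|C_P(x)|$. The only subtlety worth flagging explicitly is the match-up between $G$-conjugacy and $P$-conjugacy on $P$, which is exactly what Lemma~\ref{lemma:conjcent}(i) guarantees and which is what legitimises replacing the abstract representative set $R$ in Lemma~\ref{lemma:prob} by $R(P)$.
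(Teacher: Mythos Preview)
Your proof is correct and follows essentially the same approach as the paper: apply Lemma~\ref{lemma:prob} at $\alpha=e$, use Lemma~\ref{lemma:conjcent}(i) to identify $R$ with $R(P)$ and $|x^G\cap G_e|$ with $|x^P|$, substitute $|C_G(x)|=|\fix_B(x)||C_P(x)|$ from Lemma~\ref{lemma:conjcent}(iii) and $|G|=|T|^k|P|$, then bound $|\fix_B(x)|$ via Lemma~\ref{lemma:fix}. Your only addition is making the orbit--stabiliser cancellation $|x^P||C_P(x)|=|P|$ explicit, which the paper leaves implicit.
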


\begin{proof}
 By Lemma \ref{lemma:conjcent}(i), $R(P)$ is a  set of representatives for the $G$-conjugacy classes of prime order elements in $G_e=P$, and $|x^G\cap G_e|=|x^P|$ for all $x\in R(P)$. Moreover, Lemma  \ref{lemma:conjcent}(iii) and Lemma \ref{lemma:fix} imply that $|C_G(x)|=|\fix_B(x)||C_P(x)|\leq |T|^{\omega_{\bm{k}}(x)}|C_P(x)|$ for all $x\in R(P)$. Since $|G|=|T|^k|P|$, Lemma \ref{lemma:prob} implies that
$$Q(G,b)\leq  \sum_{x\in R(P)} \frac{|x^P|^b(|T|^{\omega_{\bm{k}}(x)}|C_P(x)|)^{b-1}}{(|T|^k|x^P||C_P(x)|)^{b-1}},$$
 and the result follows.
\end{proof}

\begin{proof}[Proof of Theorem~\emph{\ref{thm:primprobgeneral}}]
By assumption, $P$ is a primitive subgroup of $S_k$, and $\Inn(T)\leq \varphi(Q)$ when $P$ is $S_k$ or $A_k$. 
It suffices to prove that $Q(G,2)\to 0$ as $|G|\to\infty$.
Let $R(P)$ be a set of representatives for the conjugacy classes of prime order elements in $P$. By Lemma \ref{lemma:form}, $Q(G,2)\leq \sum_{x\in R(P)} |x^P|/ |T|^{k-\omega_{\bm{k}}(x)}$.  We claim that 
$$\sum_{x\in R(P)}\frac{ |x^P|}{ |T|^{k-\omega_{\bm{k}}(x)}}\leq \frac{C}{|T|^{\frac{1}{3}}}\left( \frac{1}{c^k}+\frac{1}{\sqrt{k}} + \frac{k^2+4 k}{ (k-1)!^{\frac{2}{3}}} \right) $$
for some absolute constants $C$ and $c>1$, in which case  $Q(G,2)\to 0$ as $|G|\to\infty$ since the fact that $|G|\leq |T|^kk!$ forces $|T|\to\infty$ or $k\to \infty$ as $|G|\to\infty$.

If $P$ is not $S_k$ or $A_k$, then the claim holds by~\cite[Lemma~4.4]{Faw2013}, so we assume that $P$ is $S_k$ or $A_k$. Then $T=A_{k-1}$ by  Lemma \ref{lemma:twSkAk}. Note that  $k-\omega_{\bm{k}}(x)$ is 1 when $x$ is a transposition and at least 2 otherwise.  Then 
 $$ 
\sum_{x\in R(P)}\frac{ |x^P|}{ |T|^{k-\omega_{\bm{k}}(x)-\frac{1}{3}}}
\leq  \frac{ |(12)^{S_k}|}{ |T|^{1-\frac{1}{3}}} + \frac{|S_k|}{ |T|^{2-\frac{1}{3}}}
= \frac{ k(k-1)}{ (k-1)!^{\frac{2}{3}}2^{\frac{1}{3}}}+\frac{2^{\frac{5}{3}} k}{ (k-1)!^{\frac{2}{3}}}
\leq \frac{k^2+4k}{ (k-1)!^{\frac{2}{3}}},
$$
and the claim follows.
\end{proof}

\begin{remark}
It would be interesting to determine whether Theorem~\ref{thm:primprobgeneral} still holds if we weaken the assumption that $P$ is  primitive to quasiprimitive. By Theorem~\ref{thm:QPgeneral}, such twisted wreath products have base size $2$, so this is a reasonable problem to consider. It would suffice to  adapt the proof of \cite[Lemma 4.4]{Faw2013}. The key  ingredients of this proof are the following  properties of primitive subgroups  $P$ of $S_k$:  (a) if $P$ contains a transposition, then $P=S_k$; (b) either $A_k\leq P$, or $|P|\leq 4^k$~\cite{PraSax1980}; (c) either $A_k\leq P$, or $|P|\leq \exp(4\sqrt{k}\log^2{k})$ for sufficiently large $k$~\cite{Bab1982}; (d) either the minimal degree of $P$ is at least $k/3$, or $A_m^r\leq P\leq S_m\wr S_r$ and $P$ acts via the product action  on $\Delta^r$ where $\Delta$ is the set of $\ell$-subsets of $\bm{m}$ and $m\geq 5$, $r\geq 1$ and $1\leq\ell<m/2$~\cite{LieSax1991}. 
By~\cite{PraSha2001}, (a), (b) and (c)  hold for any quasiprimitive subgroup $P$ of $S_k$.
\end{remark}

\begin{proof}[Proof of Theorem~\emph{\ref{thm:prob}}]
Let $G$ be a primitive permutation group of twisted wreath type whose top group is primitive.  By Theorem~\ref{thm:twONan}, we may assume that $G$ is given by  Hypothesis~\ref{hyp:tw}, so $G$ is primitive on $B$ and the top group  $P^{\bm{k}}$ is primitive on $\bm{k}$.  By Lemma~\ref{lemma:twmax}, $\core_P(Q)=1$ and $\Inn(T)\leq \varphi(Q)$, so the result follows from Theorem~\ref{thm:primprobgeneral}. 
\end{proof}

\section{Bounds on base sizes}
\label{s:bounds}

In this section, we establish two upper bounds on the base size of a twisted wreath product. 
 The proof of the first bound is elementary, but we  use the CFSG in the second to deduce that $T$ is $2$-generated. Recall that if Hypothesis~\ref{hyp:tw} holds, then  $\varphi(Q)$ is a subgroup of $\Aut(T)$, and for any subgroup $H$ of $\Aut(T)$, we denote the number of orbits of $H$  on $T$ by $\omega_T(H)$. Recall also that $d_{\bm{k}}(P)$ denotes the distinguishing number of $P$ on $\bm{k}$ (see~\S\ref{s:dist}).

\begin{lemma}
\label{lemma:hbound} 
If Hypothesis~\emph{\ref{hyp:tw}} holds and $\core_P(Q)=1$, then 
$$b_{B}(G)\leq \left \lceil\frac{\log{d_{\bm{k}}(P)}}{\log{\omega_{T}(\varphi(Q))}}\right\rceil +1\leq \left \lceil\frac{\log{d_{\bm{k}}(P)}}{\log{\omega_{T}(\Aut(T))}}\right\rceil +1.$$
\end{lemma}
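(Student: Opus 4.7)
The plan is to reduce to a bound on $b_B(P)$ via Lemma~\ref{lemma:PonB}, and then exhibit an explicit base for $P$ acting on $B$ of the required size by combining a distinguishing partition of $\bm{k}$ with orbit representatives for $\varphi(Q)$ on $T$. Concretely, since $\core_P(Q)=1$, the group $P$ acts faithfully on $\bm{k}$; set $d:=d_{\bm{k}}(P)$ and $\omega:=\omega_T(\varphi(Q))$, and note $\omega\geq 2$ because $\Aut(T)$ fixes the identity of $T$. Define $r:=\lceil \log d/\log \omega\rceil$, so $\omega^r\geq d$. Fix a distinguishing partition $\bm{k}=\Delta_1\sqcup\cdots\sqcup \Delta_d$ for $P$, fix representatives $t_1,\ldots,t_\omega$ for the $\varphi(Q)$-orbits on $T$, and assign to each block $\Delta_j$ a distinct label $(j_1,\ldots,j_r)\in \{1,\ldots,\omega\}^r$.

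The next step is to define, for each $s\in\{1,\ldots,r\}$, an element $f_s\in B$ by specifying its values on the left transversal $L$: set $f_s(a_i):=t_{j_s}$ whenever $i\in\Delta_j$, and extend to $B$ via the defining relation $f_s(xq)=f_s(x)^{\varphi(q)}$ (using the fact that every element of $T^L$ extends to $B$, as noted in \S\ref{s:tw}). I then claim $\{f_1,\ldots,f_r\}$ is a base for $P$ on $B$. If $x\in P$ fixes every $f_s$, then~(\ref{eqn:tw}) gives
$$
f_s(a_{i^x})=f_s(a_i)^{\varphi(q_{x,i})}
\quad\text{for all } i\in\bm{k},\ s\in\{1,\ldots,r\},
$$
so $f_s(a_i)$ and $f_s(a_{i^x})$ lie in the same $\varphi(Q)$-orbit on $T$. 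Because these values are orbit representatives, they must be equal; hence, if $i\in\Delta_j$ and $i^x\in\Delta_{j'}$, then $t_{j_s}=t_{j'_s}$ for every $s$, forcing $(j_1,\ldots,j_r)=(j'_1,\ldots,j'_r)$ and therefore $j=j'$ by the distinctness of the labels. Thus $x$ stabilises each $\Delta_j$ setwise, and since the partition is distinguishing and $P$ acts faithfully on $\bm{k}$, we conclude $x=1$.

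This yields $b_B(P)\leq r$, and combined with Lemma~\ref{lemma:PonB} it gives the first claimed inequality $b_B(G)=b_B(P)+1\leq r+1$. The second inequality is immediate, because any $\varphi(Q)$-orbit on $T$ is contained in an $\Aut(T)$-orbit, so $\omega_T(\varphi(Q))\geq \omega_T(\Aut(T))$ and hence $r$ decreases when $\omega_T(\varphi(Q))$ is replaced by $\omega_T(\Aut(T))$ in the denominator. The main technical point to be careful about is the bookkeeping in verifying the base property: one must correctly invert the twisted action using~(\ref{eqn:tw}) and keep track of which coordinate gets mapped to which, so that ``same $\varphi(Q)$-orbit'' translates into equality of orbit labels rather than merely into some weaker equivariance condition. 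Edge cases ($d=1$, equivalently $P=1$) are trivial and match the formula.
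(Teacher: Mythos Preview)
Your proof is correct and follows essentially the same route as the paper: reduce via Lemma~\ref{lemma:PonB}, encode a distinguishing partition of $\bm{k}$ by $r$ elements of $B$ whose values on the transversal are $\varphi(Q)$-orbit representatives, and then use~(\ref{eqn:tw}) (equivalently Lemma~\ref{lemma:ideal}) to show that any $x\in P$ fixing these elements preserves each block and hence is trivial. The only cosmetic difference is that the paper uses the base-$n$ digits of $0,\ldots,d-1$ as the labels, whereas you choose an arbitrary injection $\{1,\ldots,d\}\hookrightarrow\{1,\ldots,\omega\}^r$; these are equivalent devices.
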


\begin{proof}
Let $d:=d_{\bm{k}}(P)$, $n:=\omega_{T}(\varphi(Q))$ and $m:=\left \lceil\log{d}/\log{n}\right\rceil$. Note that $d\geq 2$, so $m\geq 1$. Note also that $n^{m-1}\leq d-1<n^m$. For each integer $u$ such that $ 0\leq u\leq d-1$, let $d_{u,0},\ldots, d_{u,m-1}$ denote the first $m$ digits of the base $n$ representation of $u$, so 
$$u= d_{u,0}+d_{u,1}n+\cdots +d_{u,m-1}n^{m-1}.$$
Let $t_0,\ldots,t_{n-1}$ be representatives of the orbits of $\varphi(Q)$ on $T$, and let $\{\Delta_0,\ldots,\Delta_{d-1}\}$ be a distinguishing partition for $P$ on $\bm{k}$. Recall from Hypothesis~\ref{hyp:tw} that 
$L=\{a_1,\ldots,a_k\}$ is a left transversal for $Q$ in $P$, and 
recall from \S\ref{s:tw} that to define an element of the base group $B$, it suffices to define an element of  $T^L$. For $1\leq j\leq m$, define   $f_j(a_i):=t_{d_{u,j-1}}$ whenever $i\in \Delta_u$. Then  $\mathcal{B}:=\{f_1,\ldots,f_m\}\subseteq B$. Suppose that $x\in P$ fixes $\mathcal{B}$ pointwise. Let $i\in \Delta_u$ where $0\leq u\leq d-1$. Then   $i^x\in \Delta_v$ for some $0\leq v\leq d-1$. By Lemma \ref{lemma:ideal},   $f_j(a_i)^{\varphi(q_{x,i})}=f_j(a_{i^x})$ for  $1\leq j\leq m$, so $t_{d_{u,j-1}}$ and $t_{d_{v,j-1}}$ are in the same orbit of  $\varphi(Q)$ for  $1\leq j\leq m$. Thus   $d_{u,j-1}=d_{v,j-1}$ for $1\leq j\leq m$, so $u=v$. We have proved that $\Delta_u^x=\Delta_u$ for $0\leq u\leq d-1$, so $x\in \core_P(Q)=1$. Hence $\mathcal{B}$  is a base for the action of $P$ on $B$, and we are done by  Lemma~\ref{lemma:PonB}.
\end{proof}

\begin{lemma}
\label{lemma:Tbound} 
If Hypothesis~\emph{\ref{hyp:tw}} holds, then 
$$b_{B}(G)\leq \left \lceil\frac{\log{d_{\bm{k}}(P)}}{\log{|T|}}\right\rceil+3.$$
\end{lemma}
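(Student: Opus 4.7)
I plan to mimic the proof of Lemma~\ref{lemma:hbound}, but with a more clever choice of ``labels'' that exploits the $2$-generation of $T$ (this is where the CFSG enters). Set $d := d_{\bm{k}}(P)$ and $m := \lceil \log d/\log|T|\rceil$, so $d \leq |T|^m$; by Lemma~\ref{lemma:PonB}, it suffices to produce a base of size $m+2$ for the action of $P$ on $B$.

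Fix generators $t_1,t_2\in T$ of $T$ (which exist by the CFSG) and an enumeration $T=\{s_0,\ldots,s_{|T|-1}\}$. For each $u\in\{0,\ldots,d-1\}$, write $u=u_0+u_1|T|+\cdots+u_{m-1}|T|^{m-1}$ in base $|T|$, and set
$$\ell_u:=(t_1,t_2,s_{u_0},\ldots,s_{u_{m-1}})\in T^{m+2}.$$
Any $\sigma\in \Aut(T)$ fixing $\ell_u$ must fix both $t_1$ and $t_2$ and is therefore trivial, so $\ell_u$ has trivial $\Aut(T)$-stabiliser; similarly, if $\ell_u^\sigma=\ell_v$ for some $\sigma\in\Aut(T)$, then $\sigma$ fixes $(t_1,t_2)$, so $\sigma=1$ and $u=v$. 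Thus the $\ell_u$ lie in $d$ distinct regular $\Aut(T)$-orbits on $T^{m+2}$. Pick a distinguishing partition $\{\Delta_0,\ldots,\Delta_{d-1}\}$ for $P$ on $\bm{k}$, and in the style of Lemma~\ref{lemma:hbound}, define $f_j\in B$ for $1\leq j\leq m+2$ by declaring $f_j(a_i):=(\ell_u)_j$ whenever $i\in\Delta_u$.

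Suppose $x\in P$ fixes every $f_j$. For each $i\in\bm{k}$, with $i\in\Delta_u$ and $i^x\in\Delta_v$, Lemma~\ref{lemma:ideal} gives $\ell_u^{\varphi(q_{x,i})}=\ell_v$. The distinct-orbit property forces $u=v$, so $x$ stabilises every block setwise; as the partition is distinguishing, $x\in\core_P(Q)$ and hence $i^x=i$ for all $i$. The regularity of the stabiliser then yields $\varphi(q_{x,i})=1$, so $q_{x,i}=a_i^{-1}xa_i\in U$ and thus $x\in a_iUa_i^{-1}$ for every $i\in\bm{k}$. Since $U=\ker\varphi$ is normal in $Q$, any $g\in P$ may be written as $a_iq$ with $q\in Q$, giving $gUg^{-1}=a_iUa_i^{-1}$, so $\{gUg^{-1}:g\in P\}=\{a_iUa_i^{-1}:i\in\bm{k}\}$ and hence $\bigcap_{i\in\bm{k}} a_iUa_i^{-1}=\core_P(U)\leq\core_P(V)=1$. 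Therefore $x=1$, $\{f_1,\ldots,f_{m+2}\}$ is a base for $P$ on $B$, and Lemma~\ref{lemma:PonB} gives $b_B(G)\leq m+3$.

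The only delicate point is finding labels that are simultaneously in distinct orbits (to decode blocks) and in regular orbits (to kill $\varphi(q_{x,i})$); anchoring every label with the fixed generating pair $(t_1,t_2)$ handles both requirements at once, using exactly $m+2$ coordinates and producing precisely $|T|^m\geq d$ usable orbits. This is the sole step in the proof that invokes the CFSG.
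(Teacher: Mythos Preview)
Your proof is correct and follows essentially the same approach as the paper's: both use a generating pair of $T$ (via CFSG) to force $\varphi(q_{x,i})=1$, base-$|T|$ digits to recover the block index $u$, and then the computation $\bigcap_i a_iUa_i^{-1}=\core_P(U)\leq\core_P(V)=1$ to finish. The only differences are cosmetic---you place the generating pair in the first two coordinates rather than the last, and you package the argument via ``labels in distinct regular $\Aut(T)$-orbits,'' whereas the paper deduces $\varphi(q_{x,i})=1$ directly before turning to the block decoding.
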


\begin{proof}
Let  $d:=d_{\bm{k}}(P)$, $n:=|T|$  and $m:=\left \lceil\log{d}/\log{n}\right\rceil$. For each integer $u$ such that $ 0\leq u\leq d-1$, let $d_{u,0},\ldots, d_{u,m-1}$ denote the first $m$ digits of the base $n$ representation of $u$, so 
$$u= d_{u,0}+d_{u,1}n+\cdots +d_{u,m-1}n^{m-1}.$$
 By \cite{AscGur1984}, there exist $t,s\in T$ such that $T=\langle t,s\rangle$. Enumerate the elements of $T$ as $t_0,\ldots,t_{|T|-1}$, and let $\{\Delta_0,\ldots,\Delta_{d-1}\}$ be a distinguishing partition for $P$ on $\bm{k}$. For $1\leq j\leq m$, let   $f_j(a_i):=t_{d_{u,j-1}}$ whenever $i\in \Delta_u$. Let $f_{m+1}(a_i):=t$ and $f_{m+2}(a_i):=s$ for $i\in \bm{k}$. 
  Then  $\mathcal{B}:=\{f_1,\ldots,f_{m+2}\}\subseteq B$. Suppose that $x\in P$ fixes $\mathcal{B}$ pointwise. 
By Lemma~\ref{lemma:ideal}, $f(a_i)^{\varphi(q_{x,i})}=f(a_{i^x})$ for  all  $i\in\bm{k}$ and $f\in \mathcal{B}$.  
 In particular, for each $i\in\bm{k}$,  the automorphism $\varphi(q_{x,i})$ fixes $t$ and $s$, so $\varphi(q_{x,i})=1$.  Let $i\in \Delta_u$ where $0\leq u\leq d-1$. Then   $i^x\in \Delta_v$ for some $0\leq v\leq d-1$. 
Now $t_{d_{u,j-1}}=f_j(a_i)=f_j(a_{i^x})=t_{d_{v,j-1}}$ for $1\leq j\leq m$. Thus   $d_{u,j-1}=d_{v,j-1}$ for $1\leq j\leq m$, so $u=v$.  We have proved that $\Delta_u^x=\Delta_u$ for $0\leq u\leq d-1$, so $x\in \core_P(Q)$. For  $i\in\bm{k}$, note that  $q_{x,i}\in U$ since $\varphi(q_{x,i})=1$, and 
observe that  $q_{x,i}=a_i^{-1}xa_{i^x}=a_i^{-1}xa_i$.  Thus 
$x\in \bigcap_{i\in \bm{k}} a_iUa_i^{-1}=\core_P(U)\leq \core_P(V)=1$. 
 Hence $\mathcal{B}$  is a base for  $P$, and we are done by Lemma~\ref{lemma:PonB}.
\end{proof}

Next we use Theorem~\ref{thm:dist} to show that there is a close relationship between the upper bound on $b_B(G)$ in Lemma~\ref{lemma:Tbound} and the  trivial lower bound $\log_{|B|}|G|\leq b_B(G)$.

\begin{lemma}
\label{lemma:keybound}
Assume Hypothesis~\emph{\ref{hyp:tw}}. Then 
\begin{equation}
\label{eqn:key}
\frac{\log{d_{\bm{k}}(P)}}{\log{|T|}}< \frac{\log{|G|}}{\log{|B|}}.
\end{equation}
Further, if $\core_P(Q)=1$, then
\begin{equation}
\label{eqn:primkey}
\frac{\log{|G|}}{\log{|B|}}<  \frac{\log{d_{\bm{k}}(P)}}{\log{|T|}}+1.
\end{equation}
\end{lemma}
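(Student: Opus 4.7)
The plan is to rewrite both inequalities explicitly in terms of $|T|$, $|P|$, and $k$, and then apply the Duyan--Halasi--Mar\'oti bounds from Theorem~\ref{thm:dist}. Since $G = B{:}P$ with $|B| = |T|^k$, we have $\log|G|/\log|B| = 1 + \log|P|/(k\log|T|)$. Multiplying through by $\log|T|$, the first inequality~(\ref{eqn:key}) is equivalent to $\log d_{\bm{k}}(P) < \log|T| + \log|P|/k$, and~(\ref{eqn:primkey}) is equivalent to $\log|P|/k < \log d_{\bm{k}}(P)$, so it suffices to prove these two statements.

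For~(\ref{eqn:key}), I would apply the upper bound of Theorem~\ref{thm:dist} to the transitive permutation group $P^{\bm{k}}$ of degree $k \geq 2$. Since $d_{\bm{k}}(P) = d_{\bm{k}}(P^{\bm{k}})$ and $|P^{\bm{k}}| \leq |P|$, this yields $d_{\bm{k}}(P) \leq 48\,|P|^{1/k}$. Because $T$ is non-abelian simple, $|T| \geq 60 > 48$, and hence $d_{\bm{k}}(P) < |T|\cdot|P|^{1/k}$; taking logarithms gives~(\ref{eqn:key}).

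For~(\ref{eqn:primkey}), the hypothesis $\core_P(Q) = 1$ says precisely that $P$ acts faithfully on $\bm{k}$, so $P = P^{\bm{k}}$ as permutation groups of degree $k \geq 2$. The lower bound of Theorem~\ref{thm:dist} then gives $|P|^{1/k} < d_{\bm{k}}(P)$, and taking logarithms yields the second inequality.

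There is no genuine obstacle here: the whole argument is a logarithmic repackaging of Theorem~\ref{thm:dist}. The only two mild points worth noticing are the constant gap $|T| \geq 60 > 48$, which provides the strictness in~(\ref{eqn:key}) and relies only on $|A_5|=60$ being the smallest non-abelian simple order, and the identification of $|P|$ with $|P^{\bm{k}}|$ which uses the faithfulness hypothesis of~(\ref{eqn:primkey}). It is also worth remarking that~(\ref{eqn:key}) holds without requiring $\core_P(Q)=1$, since we only needed the upper estimate on $P^{\bm{k}}$ and a trivial comparison $|P^{\bm{k}}|\leq|P|$.
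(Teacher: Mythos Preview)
Your proof is correct and follows essentially the same route as the paper's: both arguments rewrite $\log|G|/\log|B|=1+\log|P|/(k\log|T|)$, apply the upper bound $d_{\bm{k}}(P)=d_{\bm{k}}(P^{\bm{k}})\leq 48\sqrt[k]{|P^{\bm{k}}|}\leq 48\sqrt[k]{|P|}$ from Theorem~\ref{thm:dist} together with $48<|T|$ for~(\ref{eqn:key}), and use faithfulness plus the lower bound $\sqrt[k]{|P|}<d_{\bm{k}}(P)$ for~(\ref{eqn:primkey}). The only cosmetic difference is that the paper cites the elementary remark in \S\ref{s:dist} for the lower bound rather than Theorem~\ref{thm:dist} itself.
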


\begin{proof}
Let $t:=|T|$. Recall that $|G|=|B||P|$ and $|B|=t^k$. Let $R:=P^{\bm{k}}$, so $d:=d_{\bm{k}}(P)=d_{\bm{k}}(R)$.
By Theorem~\ref{thm:dist}, $d\leq 48\sqrt[k]{|R|}\leq 48\sqrt[k]{|P|}$, and clearly $48< t$, so
$$
\log_t d\leq \log_t(48\sqrt[k]{|P|})
< 1+\log_t\sqrt[k]{|P|}=1+\log_{t^k}|P|=\log_{|B|}|G|,
$$
in which case~(\ref{eqn:key}) holds. Now suppose that $\core_P(Q)=1$. Then $\sqrt[k]{|P|}< d$ (see~\S\ref{s:dist}), in which case~(\ref{eqn:primkey}) holds by the observations above.
\end{proof}

We finish this section with a useful consequence of Lemma~\ref{lemma:hbound}.

\begin{lemma}
\label{lemma:hboundplus}
Let $G$ be a primitive permutation group of twisted wreath type with socle $T^k$ and top group $R$, where $T$ is a simple group. If $d_{\bm{k}}(R)\leq \omega_{T}(\Aut(T))$, then $b(G)=2$.
\end{lemma}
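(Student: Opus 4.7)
The plan is to deduce the lemma directly from Lemma~\ref{lemma:hbound}, together with the structural information about primitive twisted wreath products provided by Theorem~\ref{thm:twONan} and Lemma~\ref{lemma:twmax}.

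First, I would invoke Theorem~\ref{thm:twONan} to realise $G$ as a twisted wreath product $T\twr_\varphi P$ acting on its base group $B$, as described by Hypothesis~\ref{hyp:tw}, with top group $P^{\bm k}$. Since $G$ is primitive on $B$, Lemma~\ref{lemma:twmax} gives $\core_P(Q)=1$, so $P$ is faithful on $\bm{k}$; in particular $P^{\bm k}\simeq P$, so we may identify the top group $R$ with $P$ and write $d_{\bm{k}}(R)=d_{\bm{k}}(P)$.

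The second step is the numerical estimate. Since $\varphi(Q)\leq\Aut(T)$, every orbit of $\Aut(T)$ on $T$ is a union of orbits of $\varphi(Q)$, so
\[
\omega_T(\Aut(T))\leq \omega_T(\varphi(Q)).
\]
Combining this with the hypothesis $d_{\bm{k}}(R)\leq \omega_T(\Aut(T))$ gives $\log d_{\bm{k}}(P)\leq \log\omega_T(\varphi(Q))$, and since $P$ is a non-trivial transitive subgroup of $\Sym(\bm{k})$ with $k\geq 2$ we have $d_{\bm{k}}(P)\geq 2$, so the ceiling in Lemma~\ref{lemma:hbound} equals $1$. Therefore
\[
b_B(G)\leq \left\lceil\frac{\log d_{\bm{k}}(P)}{\log\omega_T(\varphi(Q))}\right\rceil+1\leq 2.
\]

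Finally, for the reverse inequality, note that $G=B{:}P$ with $P\neq 1$ (since $P$ acts faithfully and transitively on $\bm{k}$ with $k\geq 2$), so the point stabiliser of $G$ on $B$ is non-trivial and $G$ is not regular; hence $b_B(G)\geq 2$, and equality follows. No step here is really an obstacle: the whole argument is a short chain of inequalities, and the only mild subtlety is remembering that $\varphi(Q)\leq\Aut(T)$ reverses the inequality between orbit counts in the right direction for the bound of Lemma~\ref{lemma:hbound} to apply.
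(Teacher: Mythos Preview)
Your argument is correct and is essentially the paper's own proof: invoke Theorem~\ref{thm:twONan} and Lemma~\ref{lemma:twmax} to reduce to Hypothesis~\ref{hyp:tw} with $\core_P(Q)=1$ and $R=P^{\bm{k}}$, then apply Lemma~\ref{lemma:hbound}. You have simply spelled out the inequality $\omega_T(\Aut(T))\leq\omega_T(\varphi(Q))$ and the lower bound $b_B(G)\geq 2$ that the paper leaves implicit.
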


\begin{proof}
By Theorem~\ref{thm:twONan}, we may assume that $G$ is given by Hypothesis~\ref{hyp:tw} and $R=P^{\bm{k}}$. Now $G$ is primitive on $B$, so $\core_P(Q)=1$ by Lemma~\ref{lemma:twmax},  and the result follows from Lemma~\ref{lemma:hbound}.
\end{proof}

\section{Base size $2$}
\label{s:QP}

In this section, we prove the following result, which shows that a large class of quasiprimitive permutation groups of twisted wreath type have base size $2$ (see Remark~\ref{remark:base2}). We then use this result to prove Theorem~\ref{thm:QP}. Recall that a permutation group is semiprimitive if every normal subgroup is transitive or semiregular.

\begin{thm}
\label{thm:QPgeneral}
Suppose that Hypothesis~\emph{\ref{hyp:tw}} holds  and   $\core_P(Q)=1$. If any of the following hold, then $b_B(G)=2$.
\begin{itemize}
\item[(i)] $P$ is semiprimitive on~$\bm{k}$, and if $P$ is $S_k$ or $A_k$, then  $\Inn(T)\leq \varphi(Q)$.
\item[(ii)] $A_4$ is not a section of $P$.  
\item[(iii)] The order of $P$ is not divisible by $4$.
\item[(iv)]  $A_5$ is not a section of $P$ and $T\not\simeq  A_5$.
\item[(v)] $P$ is soluble and $T\not\simeq  A_5$.
\item[(vi)]   $A_s$ is not a section of $P$ and $3\leq s\leq \omega_{T}(\Aut(T))$.
\end{itemize}
\end{thm}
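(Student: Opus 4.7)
The overall strategy is to appeal to Lemma \ref{lemma:hbound}, which reduces $b_B(G)=2$ to the inequality $d_{\bm{k}}(P)\leq \omega_T(\varphi(Q))$; when this bound is too weak, I would instead invoke Lemma \ref{lemma:form} and secure a base pair via the first-moment argument $Q(G,2)<1$. The lower bound $b_B(G)\geq 2$ is immediate, as $P=G_e\neq 1$.

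First, two reductions. Case (iii) is a special case of (ii): since $4\mid |A_4|$, the hypothesis $4\nmid |P|$ precludes $A_4$ from being a section of $P$. Case (v) is a special case of (iv): $A_5$ is insoluble, so it cannot be a section of a soluble $P$. Cases (ii), (iv) and (vi) are then instances of a common template: whenever $A_s$ is not a section of $P$ for some integer $s$ with $3\leq s\leq \omega_T(\Aut(T))$, Theorem \ref{thm:summary}(iii) gives $d_{\bm{k}}(P)\leq s$ (with a trivial check when $P=A_k$ or $S_k$, in which case the assumption forces $k<s$ and hence $d_{\bm{k}}(P)\leq k<s$), and Lemma \ref{lemma:hbound} closes the argument. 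For (vi), $s$ is prescribed; for (ii), take $s=4$ and use that $\omega_T(\Aut(T))\geq 4$ for every non-abelian simple $T$ (tight at $T=A_5$, where the $S_5$-orbits on $A_5$ are exactly the cycle types $1^5$, $2^2 1$, $3\cdot 1^2$, and $5$); for (iv), take $s=5$ and verify that $\omega_T(\Aut(T))\geq 5$ for every non-abelian simple $T\neq A_5$, via a short case check on the simple groups of smallest order (the binding cases $T=\PSL(2,7)$ and $T=A_6$ each give exactly $5$ orbits, and larger groups have many more conjugacy classes).

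For case (i), I split on whether $P$ is $A_k$ or $S_k$. If $P$ is semiprimitive but not $A_k$ or $S_k$, then Theorem \ref{thm:summary}(i) yields $d_{\bm{k}}(P)\leq 4\leq \omega_T(\Aut(T))\leq \omega_T(\varphi(Q))$, and Lemma \ref{lemma:hbound} concludes. Otherwise, the hypothesis $\Inn(T)\leq \varphi(Q)$ combined with Lemma \ref{lemma:twSkAk} forces $T\simeq A_{k-1}$ and $k\geq 6$, while the restricted normal-subgroup structure of $Q$ rules out any nontrivial kernel and so $\varphi$ must be injective. When $P=A_k$, the simplicity of $Q=A_{k-1}$ forces $\varphi(Q)$ to coincide with $\Inn(T)$, whence $\omega_T(\varphi(Q))$ equals the number of conjugacy classes of $A_{k-1}$; this is at least $k-1=d_{\bm{k}}(A_k)$ for all $k\geq 6$, and Lemma \ref{lemma:hbound} again applies.

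The main obstacle is the remaining subcase $P=S_k$ with $T\simeq A_{k-1}$. Here $\varphi(Q)$ is a copy of $S_{k-1}$ sitting inside $\Aut(A_{k-1})$, so $\omega_T(\varphi(Q))$ equals the number of even partitions of $k-1$; this exceeds $d_{\bm{k}}(S_k)=k$ for all $k\geq 8$ (so Lemma \ref{lemma:hbound} still applies), but equals $4<6$ when $k=6$ and $6<7$ when $k=7$. For these two residual values, I would abandon Lemma \ref{lemma:hbound} and appeal to Lemma \ref{lemma:form} directly. The dominant contribution comes from the transposition class of $S_k$: with $|x^P|=\binom{k}{2}$ and $k-\omega_{\bm{k}}(x)=1$, that term equals $k(k-1)/(2|A_{k-1}|)=k/(k-2)!$, which is $1/4$ for $k=6$ and $7/120$ for $k=7$. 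All other prime-order classes of $S_k$ satisfy $k-\omega_{\bm{k}}(x)\geq 2$, so each contributes at most $|S_k|/|A_{k-1}|^2$, a small quantity. Tallying the contributions shows $Q(G,2)<1$ for both $k\in\{6,7\}$, whence $b_B(G)=2$. Verifying this finite estimate, uniformly over all choices of $\varphi$ compatible with Hypothesis \ref{hyp:tw}, is the technical heart of the obstacle.
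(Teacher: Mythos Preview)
Your proposal is correct and follows essentially the same route as the paper: reduce to $d_{\bm{k}}(P)\leq\omega_T(\varphi(Q))$ via Lemma~\ref{lemma:hbound}, feed in the distinguishing-number bounds of Theorem~\ref{thm:summary}, and fall back on the first-moment bound of Lemma~\ref{lemma:form} for the residual $S_k$/$A_k$ cases with $k\in\{6,7\}$. The paper packages the $S_k$/$A_k$ case into a single lemma (Lemma~\ref{lemma:QPSkAk}) and cites Burnside's $p^aq^b$ theorem and \cite{Str2001} for the bounds $\omega_T(\Aut(T))\geq 4$ (resp.\ $\geq 5$ when $T\not\simeq A_5$), whereas you verify these by direct inspection; also, your separate treatment of $P=A_k$ via $\varphi(Q)=\Inn(T)$ lets you avoid Lemma~\ref{lemma:form} entirely in that subcase, which the paper does not bother to do. One small remark: the bound in Lemma~\ref{lemma:form} depends only on $P$, $k$ and $|T|$, not on $\varphi$, so there is no uniformity over $\varphi$ to worry about in the $k\in\{6,7\}$ check.
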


\begin{remark}
\label{remark:base2}
We make several observations concerning Theorem~\ref{thm:QPgeneral}.
\begin{enumerate}
\item Many quasiprimitive  permutation groups of twisted wreath type satisfy the conditions of Theorem~\ref{thm:QPgeneral}
 and therefore have base size $2$. In fact, by Remark~\ref{remark:lotsQP}, these conditions can be satisfied  for any non-abelian simple group $T$ and transitive subgroup $P$ of $S_k$ such that either $P$ is semiprimitive on $\bm{k}$ but not $S_k$ or $A_k$, or $T$ and $P$ satisfy one of (ii)--(vi) where $k\geq 2$.  Similarly, if $P=S_k$ or $A_k$  where $k\geq 6$, then the conditions of Theorem~\ref{thm:QPgeneral} can be  satisfied with $T=A_{k-1}$ by Example~\ref{exp:AS}. 
\item There are conditions in  Theorem~\ref{thm:QPgeneral}   that cannot hold for any    primitive permutation group of twisted wreath type.
Indeed, if Hypothesis~\ref{hyp:tw} holds and $G$ is primitive on $B$, then $T$ is a section of $P$ by Lemma~\ref{lemma:twmax}, so (iii) and  (v) cannot hold.
Moreover, if (i) holds, then $P$  is  quasiprimitive. This is because any semiregular normal subgroup of $P$ is trivial: if $S$ is such a subgroup, then $S\cap Q=1$,
 so $(SU)\cap V=U$, but then we may take  $M=P$ and $R=SU$ in Theorem~\ref{thm:primitivetwr}(iii), 
 so $R=U$ and $S=1$. (The author thanks Luke Morgan for providing this argument.) 
\item  The assumption that $\Inn(T)\leq \varphi(Q)$ when $P$ is $S_k$ or $A_k$ is required; see Example~\ref{exp:Sktop}.
\end{enumerate}
\end{remark}

In order to prove Theorem~\ref{thm:QPgeneral}, we first  consider the case where $P$ is $S_k$ or $A_k$.

\begin{lemma}
 \label{lemma:QPSkAk}
Assume Hypothesis~\emph{\ref{hyp:tw}}. If $ \Inn(T)\leq \varphi(Q)$ and $P$ is $S_k$ or $A_k$, then $b_B(G)=2$.
\end{lemma}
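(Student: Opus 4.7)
The plan is to apply the probabilistic bound from Lemma~\ref{lemma:form}. First I would invoke Lemma~\ref{lemma:twSkAk} to deduce that $T\simeq A_{k-1}$ and $k\geq 6$; in particular $|T|=(k-1)!/2$. Since $B$ is regular on itself and $B\lneq G$, the group $G$ is not regular on $B$, so $b_B(G)\geq 2$; it therefore suffices to exhibit a base of size $2$. By Lemma~\ref{lemma:form}, such a base will exist provided
\[
\sum_{x\in R(P)}\frac{|x^P|}{|T|^{k-\omega_{\bm{k}}(x)}}<1,
\]
where $R(P)$ runs over representatives of the prime-order conjugacy classes of $P$.

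Next I would estimate the sum by splitting it according to cycle type. A prime-order element $x\in P$ with $a$ cycles of prime length $p$ (and $k-ap$ fixed points) satisfies $k-\omega_{\bm{k}}(x)=a(p-1)$, so $k-\omega_{\bm{k}}(x)=1$ precisely when $x$ is a transposition, and $k-\omega_{\bm{k}}(x)\geq 2$ otherwise. Transpositions (which occur only when $P=S_k$) contribute
\[
\frac{\binom{k}{2}}{|T|}=\frac{k(k-1)/2}{(k-1)!/2}=\frac{k}{(k-2)!},
\]
while all remaining prime-order elements contribute together at most
\[
\frac{|P|}{|T|^{2}}\leq \frac{k!}{\bigl((k-1)!/2\bigr)^{2}}=\frac{4k}{(k-1)!}.
\]
Combining these, the sum in question is bounded above by $k/(k-2)!+4k/(k-1)!$, a quantity equal to $\tfrac{1}{4}+\tfrac{1}{5}=\tfrac{9}{20}$ at $k=6$ and strictly decreasing in $k\geq 6$. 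Hence the desired inequality holds for every admissible $k$, and $b_B(G)=2$.

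The only delicate point I foresee is confirming that these crude estimates genuinely beat $1$ in the tightest case $k=6$ with $P=S_6$; the margin there (roughly $0.45$) is comfortable but is essentially the only place where one must pause to check. Notably, no further information about the homomorphism $\varphi$ is used beyond the consequence $T\simeq A_{k-1}$ already extracted by Lemma~\ref{lemma:twSkAk}, and no refined analysis of the $P$-conjugacy classes of $S_k$ or $A_k$ is required.
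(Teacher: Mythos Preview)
Your proof is correct. Both you and the paper start by invoking Lemma~\ref{lemma:twSkAk} to obtain $T\simeq A_{k-1}$ and $k\geq 6$, but then the arguments diverge. The paper splits into two cases: for $k\geq 8$ it appeals to the distinguishing-number bound of Lemma~\ref{lemma:hbound}, showing by an induction on the number of even cycle types that $k-1<\omega_{A_{k-1}}(S_{k-1})$; only for the residual cases $k=6,7$ does it fall back on the probabilistic bound of Lemma~\ref{lemma:form}. You instead run Lemma~\ref{lemma:form} uniformly for all $k\geq 6$, using the crude estimate $\sum_{x}|x^P|\leq |P|$ over the non-transposition classes. Your route is shorter and avoids the case split entirely; the paper's $k\geq 8$ argument, on the other hand, extracts an explicit combinatorial inequality about even cycle types that is of some independent interest (and is reused later in Example~\ref{exp:SkwrSr}).
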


\begin{proof}
By Lemma \ref{lemma:twSkAk}, $T=A_{k-1}$ and $k\geq 6$. First suppose that $k\geq 8$.  Let $m:=k-1$. Now $d_{\bm{k}}(P)\leq m+1$, so by Lemma \ref{lemma:hbound}, it suffices to show that $m< \omega_{A_m}(S_m)$.  Observe that  $ \omega_{A_m}(S_m)$ is the number of cycle types of even permutations of $\bm{m}$.   Write $m+1=2^ir$ where $i$ is a non-negative integer  and $r$ is an  odd integer. Then $A_{m+1}$ contains a fixed-point-free permutation consisting of $2^i$ disjoint $r$-cycles unless $r=1$, in which case $A_{m+1}$ contains a fixed-point-free permutation consisting of $2^{i-1}$ disjoint transpositions. Thus $\omega_{A_m}(S_m)+1\leq \omega_{A_{m+1}}(S_{m+1})$. Since $\omega_{A_7}(S_7)=8$, the  result follows by induction. 

It remains to consider the case where $k=6$ or $7$. Let $R(P)$ be a set of representatives for the conjugacy classes of prime order elements in $P$. By Lemma~\ref{lemma:form}, it suffices to prove that $\sum_{x\in R(P)} |x^P||T|^{\omega_{\bm{k}}(x)}<|T|^k$, for then $Q(G,2)<1$, so $b_B(G)=2$. This is routine.
\end{proof}

\begin{proof}[Proof of Theorem~\emph{\ref{thm:QPgeneral}}]
By Lemma~\ref{lemma:hbound}, $b_B(G)=2$  if $d_{\bm{k}}(P)\leq \omega_T(\Aut(T))$. By Theorem~\ref{thm:summary}(iii), if $A_s$ is not a section of $P$, then $d_{\bm{k}}(P)\leq s$. Hence if  (vi) holds, then $b_B(G)=2$.

Note that $\omega_T(\Aut(T))\geq 4$ since  $3$ distinct primes divide  $|T|$ by  Burnside's $p^aq^b$ Theorem  \cite[Theorem 31.4]{JamLie2001}. Thus $b_B(G)=2$ if $d_{\bm{k}}(P)\leq 4$. In particular, if (ii) or (iii) holds, then (ii) holds, so $b_B(G)=2$. Moreover, if (i) holds, then $b_B(G)=2$ by Theorem~\ref{thm:summary}(i) and Lemma~\ref{lemma:QPSkAk}.

By \cite[Theorem 2.3]{Str2001}, $\omega_T(\Aut(T))=4$  if and only if $T\simeq A_5$.   Thus $b_B(G)=2$ if $d_{\bm{k}}(P)\leq 5$ and $T\not\simeq A_5$. In particular, if (iv) or (v) holds, then (iv) holds, so $b_B(G)=2$.
\end{proof}

\begin{proof}[Proof of Theorem~\emph{\ref{thm:QP}}]
By assumption, $G$ is a primitive permutation group of twisted wreath type whose top group is quasiprimitive. By Theorem~\ref{thm:twONan}, we may assume that $G$ is given by  Hypothesis~\ref{hyp:tw}, so $G$ is primitive on $B$, and $P^{\bm{k}}$ is quasiprimitive. By Lemma~\ref{lemma:twmax}, $\core_P(Q)=1$ and $\Inn(T)\leq \varphi(Q)$, so the result follows from Theorem~\ref{thm:QPgeneral}(i) (since any quasiprimitive permutation group is semiprimitive by definition).  
\end{proof}

\section{Results related to Pyber's conjecture}
\label{s:pyber}

In this section, we prove Theorems~\ref{thm:pyber} and~\ref{thm:exp} using the following result.

\begin{thm}
\label{thm:expgeneral}
Let $H$ be a permutation group on $\Delta$ with a regular subgroup  $A\simeq T^m$ where $T$ is  a group, $m\geq 1$ and $|A|\geq 48$. Let $\Omega:=\Delta^r$ where  $r\geq 2$. Let $A^r\leq G\leq H\wr S_r$ where the image $K$ of the projection map $\rho:G\to S_r$ is transitive on $\bm{r}$. 
If  $b_\Delta(H)=2$, then 
$$
b_\Omega(G)=\left\lceil \frac{\log|G|}{\log|\Omega|}\right\rceil+\varepsilon=\left\lceil \frac{\log{d_{\bm{r}}(K)}}{m\log {|T|}}\right\rceil+\delta
$$
for some $\varepsilon,\delta\in \{0,1,2\}$ where $\varepsilon\leq \delta$. Further, if $G=H\wr K$ and $[H:A]\geq 48$, then $\varepsilon\in \{0,1\}$, $\delta\in\{1,2\}$ and $\varepsilon+1\leq \delta$.
\end{thm}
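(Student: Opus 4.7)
The plan is to combine the standard characterization of the base size of a wreath product in its product action with the estimates on $d_{\bm{r}}(K)$ provided by Theorem \ref{thm:dist}. Throughout, $|\Delta|=|A|=|T|^m$. The key ingredient is the well-known formula: for the product action of $H\wr K$ on $\Delta^r$ with $K$ transitive on $\bm{r}$, $b_\Omega(H\wr K)$ equals the smallest $c\geq 1$ such that $H$ has at least $d_{\bm{r}}(K)$ regular orbits on $\Delta^c$. This follows from analysing the semidirect action: a $c$-tuple $(\xi_1,\ldots,\xi_c)\in(\Delta^r)^c$ is a base precisely when, writing $x_i=(\xi_{1,i},\ldots,\xi_{c,i})$ for the $i$-th column, each $x_i$ lies in a regular $H$-orbit and the partition of $\bm{r}$ defined by ``$i\sim j$ iff $x_i$ and $x_j$ lie in the same $H$-orbit'' is distinguishing for $K$ on $\bm{r}$. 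Since $\rho(G)=K$ forces $G\leq H\wr K$, this yields $b_\Omega(G)\leq b_\Omega(H\wr K)$.

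To prove $\delta\leq 2$ I would fix a regular pair $(x_0,y_0)\in\Delta^2$, which exists because $b_\Delta(H)=2$. For any $(z_3,\ldots,z_c)\in\Delta^{c-2}$ the tuple $(x_0,y_0,z_3,\ldots,z_c)$ has stabilizer inside $H_{x_0}\cap H_{y_0}=1$, and two such tuples with different suffixes lie in different $H$-orbits (a conjugating element would have to fix both $x_0$ and $y_0$). So $H$ has at least $|\Delta|^{c-2}$ regular orbits on $\Delta^c$, and choosing $c=\lceil\log d_{\bm{r}}(K)/(m\log|T|)\rceil+2$ makes this at least $d_{\bm{r}}(K)$, giving $b_\Omega(G)\leq c$.

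For the lower bounds, the trivial bound $b_\Omega(G)\geq\lceil\log|G|/\log|\Omega|\rceil$ gives $\varepsilon\geq 0$. Using $A^r\leq G$ with $\rho(G)=K$ yields $|G|\geq|\Delta|^r|K|$; combining $d_{\bm{r}}(K)\leq 48|K|^{1/r}$ (Theorem \ref{thm:dist}) with $|\Delta|\geq 48$ gives $\log|G|/\log|\Omega|\geq\log d_{\bm{r}}(K)/(m\log|T|)$, so $\varepsilon\leq\delta$, hence $\delta\geq 0$. When $G=H\wr K$, the formula from the first paragraph gives that $H$ has at least $d_{\bm{r}}(K)$ regular orbits on $\Delta^c$ for $c=b_\Omega(G)$; since each such orbit has size $|H|\geq|A|=|\Delta|$, we get $|\Delta|^c\geq|\Delta|\,d_{\bm{r}}(K)$, and $\delta\geq 1$ follows. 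If additionally $[H:A]\geq 48$, the equality $|G|=|H|^r|K|$ with $|H|=[H:A]|\Delta|$ sharpens the estimate to $\log|G|/\log|\Omega|-\log d_{\bm{r}}(K)/(m\log|T|)\geq 1+(\log[H:A]-\log 48)/(m\log|T|)\geq 1$, yielding $\varepsilon+1\leq\delta$ and hence $\varepsilon\in\{0,1\}$, $\delta\in\{1,2\}$.

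The main obstacle is not conceptual but bookkeeping: the constant $48$ from Theorem \ref{thm:dist} must be absorbed by the hypotheses $|\Delta|\geq 48$ and $[H:A]\geq 48$ at the right moments, and the passage between inequalities of real numbers and inequalities of ceilings must be handled carefully in order to preserve the sharper $+1$ in $\varepsilon+1\leq\delta$.
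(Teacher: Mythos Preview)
Your argument is correct. The lower-bound computations via Theorem~\ref{thm:dist} are essentially identical to the paper's: both use $|G|\geq|A|^r|K|$ (and $|G|=|H|^r|K|$ with $|H|=[H:A]\,|\Delta|$ in the wreath-product case) together with $d_{\bm{r}}(K)\leq 48\sqrt[r]{|K|}$ to compare $\log|G|/\log|\Omega|$ with $\log d_{\bm{r}}(K)/(m\log|T|)$, absorbing the constant~$48$ via the hypotheses $|\Delta|\geq 48$ and $[H:A]\geq 48$ exactly as you describe.

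The genuine difference is in the upper bound. The paper proves a self-contained Lemma (Lemma~\ref{lemma:prod}) which constructs an explicit base of size $\lceil\log d_{\bm{r}}(K)/\log|\Delta|\rceil+b_\Delta(H)$ by encoding a distinguishing partition for $K$ in base-$|\Delta|$ digits and appending a base for $H$ in constant coordinates. You instead invoke the regular-orbit characterisation of $b_\Omega(H\wr K)$ and count regular $H$-orbits on $\Delta^c$ directly by extending a fixed regular pair. Both routes yield the same bound $b_\Omega(G)\leq\lceil\log d_{\bm{r}}(K)/(m\log|T|)\rceil+2$; your route is cleaner if one takes the orbit characterisation as known, while the paper's route is more elementary and self-contained. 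Your separate argument that $\delta\geq 1$ whenever $G=H\wr K$ (via $|\Delta|^c\geq |H|\,d_{\bm{r}}(K)$) is slightly stronger than what the theorem asserts, but in the statement as written this conclusion already follows from $\varepsilon+1\leq\delta$ together with $\varepsilon\geq 0$, so it is redundant though not incorrect.
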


In Theorem~\ref{thm:expgeneral}, we may take $H$  to be any quasiprimitive permutation group of twisted wreath type since the socle $A$ of $H$ is regular. Note, however, that even for such $H$, the group $G$ may not be quasiprimitive. For example, if $K\leq S_r$ is transitive, then we may take $G:=A\wr K$, but $A^r$ is a regular normal subgroup of $G$ that is not minimal.  Nevertheless, by  Lemma~\ref{lemma:QPprod} and Remark~\ref{remark:base2}(1), there are many  quasiprimitive permutation groups $G$ of twisted wreath type that satisfy the conditions of Theorem~\ref{thm:expgeneral}.  

To prove Theorem~\ref{thm:expgeneral}, we require the following elementary result, which we prove by adapting the  proof of  Lemma~\ref{lemma:Tbound}. 

\begin{lemma}
\label{lemma:prod}
Let $H\leq \Sym(\Delta)$ and $K\leq S_r$ where $|\Delta|\geq 2$ and $K\neq 1$. Let $G\leq  H\wr K$ and $\Omega:=\Delta^r$. Then
$$
b_\Omega(G)\leq \left\lceil  \frac{\log d_{\bm{r}}(K)}{\log |\Delta|}\right\rceil+b_\Delta(H).
$$
\end{lemma}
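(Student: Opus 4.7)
The plan is to adapt the digit-encoding construction from the proof of Lemma~\ref{lemma:Tbound}, replacing the role played there by the ``generator'' functions $f_{m+1},f_{m+2}$ with constant $r$-tuples drawn from a base of $H$ on $\Delta$. Set $d:=d_{\bm{r}}(K)$, $n:=|\Delta|$, and $m:=\lceil \log d/\log n\rceil$. Since $K\neq 1$ we have $d\geq 2$ and hence $m\geq 1$, and for each $u$ with $0\leq u\leq d-1$ the base-$n$ expansion $u=d_{u,0}+d_{u,1}n+\cdots+d_{u,m-1}n^{m-1}$ is defined. Fix a distinguishing partition $\{R_0,\ldots,R_{d-1}\}$ of $\bm{r}$ for $K$, enumerate $\Delta=\{\delta_0,\ldots,\delta_{n-1}\}$, and fix a base $\{\beta_1,\ldots,\beta_s\}$ for $H$ on $\Delta$ of minimal size $s:=b_\Delta(H)$.

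Next, I would write down a candidate base $\mathcal{B}\subseteq\Omega$ of cardinality $m+s$. For $1\leq j\leq m$, let $\omega_j\in\Omega$ be the point whose $i$-th coordinate is $\delta_{d_{u,j-1}}$ whenever $i\in R_u$; these $m$ points encode the distinguishing partition digit-wise, exactly mirroring the digit trick used in Lemma~\ref{lemma:Tbound} to encode a distinguishing partition of $\bm{k}$. For $1\leq t\leq s$, let $\omega_{m+t}:=(\beta_t,\beta_t,\ldots,\beta_t)\in\Omega$ be the constant $r$-tuple built from the $t$-th base point of $H$. The claimed bound $b_\Omega(G)\leq m+s$ will then follow from the definition of $m$ once $\mathcal{B}$ is shown to be a base.

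To prove that $\mathcal{B}$ is a base, I would take an arbitrary element $g=(h_1,\ldots,h_r)\pi\in G\leq H\wr K$ that fixes every member of $\mathcal{B}$ and unravel the product action. Using the constant tuples first: fixing $\omega_{m+t}$ means that in every coordinate $i$ the element $h_i$ sends $\beta_t$ to $\beta_t$; varying $t$ over $\{1,\ldots,s\}$ and invoking the fact that $\{\beta_1,\ldots,\beta_s\}$ is a base for $H$ on $\Delta$ forces $h_i=1$ for every $i$. With the bottom component now trivial, fixing each $\omega_j$ reduces to the assertion that $\pi$ cannot mix different parts of the distinguishing partition: comparing the $i$-th coordinate of $\omega_j$ with that of $\omega_j\cdot\pi$, if $i\in R_v$ and $i^{\pi^{-1}}\in R_u$ then $d_{u,j-1}=d_{v,j-1}$ for every $j\in\{1,\ldots,m\}$, whence $u=v$. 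Thus $\pi$ preserves each $R_u$ setwise, and distinguishability of the partition gives $\pi=1$, so $g=1$.

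The main delicate point is simply bookkeeping the conventions of the product action of $H\wr K$ on $\Delta^r$ as defined in \S\ref{s:prelim} carefully enough that the index shuffling in the argument above goes through unambiguously; there are no nontrivial obstacles beyond this, and the two pieces of $\mathcal{B}$ neatly decouple the top group $K$ from the base group $H^r$.
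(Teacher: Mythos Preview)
Your proposal is correct and is essentially identical to the paper's own proof: the paper also defines $m$ tuples $\alpha_j$ by the base-$|\Delta|$ digit encoding of a distinguishing partition together with the $b_\Delta(H)$ constant tuples $(\beta_t,\ldots,\beta_t)$, uses the constant tuples to kill the $h_i$, and then reads off from the $\alpha_j$ that $\pi$ preserves each part of the partition. The only differences are notational.
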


\begin{proof}
Let $d:=d_{\bm{r}}(K)$, $n:=|\Delta|$  and $m:=\left \lceil\log{d}/\log{n}\right\rceil$.  Note that $n^{m-1}\leq d-1<n^m$. We may assume that $\Delta=\{0,\ldots,n-1\}$.
For each integer $u$ such that $ 0\leq u\leq d-1$, let $d_{u,0},\ldots, d_{u,m-1}$ denote the first $m$ digits of the base $n$ representation of $u$, so 
$$u= d_{u,0}+d_{u,1}n+\cdots +d_{u,m-1}n^{m-1}$$
and  $d_{u,0},\ldots, d_{u,m-1}\in \Delta$.  Let $\{\Delta_0,\ldots,\Delta_{d-1}\}$ be a distinguishing partition for $K$ on $\bm{r}$. For $1\leq j\leq m$, let $\alpha_j$ be the element of $\Delta^r$ whose $i$-th coordinate is $d_{u,j-1}$ whenever  $i\in \Delta_u$. Let $b:=b_\Delta(H)$, and let $i_1,\ldots,i_b$ be a base for $H$ on $\Delta$. For $1\leq \ell\leq b$, let $\beta_\ell:=(i_\ell,\ldots,i_\ell)\in \Delta^r$. Let $\mathcal{B}:=\{\alpha_1,\ldots,\alpha_m,\beta_1,\ldots,\beta_b\}$   and suppose that $g=(h_1,\ldots,h_r)\pi\in G$ fixes $\mathcal{B}$ pointwise. Then $h_j$ fixes $i_\ell$ for $1\leq \ell\leq b$ and $1\leq j\leq r$, so $g=(1,\ldots,1)\pi$. Let $i\in \Delta_u$. Now   $i^\pi\in \Delta_v$ for some $v$. For $1\leq j\leq m$, the digit  
$d_{u,j-1}$ is the $i$-th coordinate of $\alpha_j$, so $d_{u,j-1}$ is the $i^\pi$-th coordinate of $\alpha_j^\pi=\alpha_j$, so 
 $d_{u,j-1}=d_{v,j-1}$. Thus $u=v$.  We have shown that $\Delta_u^\pi=\Delta_u$ for $0\leq u\leq d-1$, so $\pi=1$. Hence $\mathcal{B}$ is a base for $G$, and the bound on $b_\Omega(G)$ follows.
\end{proof}

Note that the bound of  Lemma~\ref{lemma:prod} is stated in~\cite[\S4.3.1]{HalLieMar2019} for the case where $H$ is primitive and almost simple and $K$ is transitive.
 They use~\cite[Lemma~2.1]{DuyHalMar2018} to deduce that $m$ is the base size of $K$ in its action on the set of partitions of $\bm{r}$ with at most $|\Delta|$ parts, and there is a natural way of using such a base to construct $\alpha_1,\ldots,\alpha_m$ so that the proof proceeds as above  (see the proof of~\cite[Lemma~3.8]{BurSer2015}).  Thus our  proof is essentially the same but more constructive. 

\begin{proof}[Proof of Theorem~\emph{\ref{thm:expgeneral}}]  
 Let $B:=A^r$. Note that $48\leq |\Delta|=|A|=|T|^m$ and $|\Omega|=|B|=|T|^{mr}$.

Let $c:=1$ when $G=H\wr K$ and $[H:A]\geq 48$, and let $c:=0$ otherwise.
Note that $|B||K|\leq |G|$ since $B\leq \ker(\rho)$, and if $c=1$, then $48^r|B||K|\leq |G|$. 
 By Theorem~\ref{thm:dist},
$$
\frac{\log d_{\bm{r}}(K)}{m\log |T|}+c
\leq \frac{\log(48\sqrt[r]{|K|})}{m\log|T|}+c
=\frac{\log 48}{m\log|T|}+\frac{\log(|B||K|)}{\log |B|}+c-1
\leq \frac{\log|G|}{\log|\Omega|}.
$$
Since $\log_{|\Omega|}|G|\leq b_\Omega(G)$ and  $b_\Delta(H)=2$, the result follows from Lemma~\ref{lemma:prod}.
\end{proof}

\begin{proof}[Proof of Theorem~\emph{\ref{thm:pyber}}]
Recall that $G$ is a quasiprimitive permutation group on $\Omega$ of twisted wreath type with top group $R$. By Theorem~\ref{thm:twONan},  we may assume that $G$ is given by Hypothesis~\ref{hyp:tw}, in which case $\Omega=B$, and we may assume that $R=P^{\bm{k}}$, so $d:=d_{\bm{k}}(R)=d_{\bm{k}}(P)$.
Recall that $\log_{|\Omega|}|G|\leq b_\Omega(G)$.  
 By Lemmas~\ref{lemma:Tbound} and~\ref{lemma:keybound}(\ref{eqn:key}), $b_\Omega(G)=\lceil\log_{|\Omega|}|G|\rceil+\varepsilon=\lceil\log_{|T|}d\rceil+\delta$
 for some $\varepsilon,\delta\in \{0,1,2,3\}$ where $\varepsilon\leq \delta$. In particular, the first claim in Theorem~\ref{thm:pyber} holds.

Now suppose that $G$ is primitive on $\Omega$. Then $\core_P(Q)=1$ by Lemma~\ref{lemma:twmax}, so $\delta\leq \varepsilon+1$ by Lemma~\ref{lemma:keybound}(\ref{eqn:primkey}). It remains to prove that $\varepsilon\leq 2$. 
If $P$ is quasiprimitive on $\bm{k}$, then $b_\Omega(G)=2$ by Theorem~\ref{thm:QP}, so $\varepsilon\leq 2$. Thus we may assume that $P$ is not quasiprimitive, so  Theorem~\ref{thm:blowup}(ii) holds for $G$. Let $H:=\overline{G}$ and $\Delta:=\overline{B}$ as in Theorem~\ref{thm:blowup}(ii). By Theorem~\ref{thm:blowup}, $H$ is primitive of twisted wreath type with a quasiprimitive top group, so $b_\Delta(H)=2$ by Theorem~\ref{thm:QP}. Moreover, by Theorem~\ref{thm:blowup}(ii),  $G$ and $H$ satisfy the conditions of Theorem~\ref{thm:expgeneral}, so $\varepsilon\leq 2$, as desired.
\end{proof}

\begin{proof}[Proof of Theorem~\emph{\ref{thm:exp}}]
Recall that $G=H\wr K$ where $H$ is a primitive permutation group on $\Delta$ of twisted wreath type with socle $T^m$, and $K$ is a transitive subgroup of $S_r$ where $r\geq 2$. 
By Lemma~\ref{lemma:QPprod},  $G$ is a primitive permutation group of twisted wreath type on $\Omega=\Delta^r$. 
 By Theorem~\ref{thm:twONan} and Lemma~\ref{lemma:twmax},  $T$ is a section of the top group $R$ of $H$, and $[H:T^m]=|R|$, so $[H:T^m]\geq |T|>48$. By assumption, $R$ is quasiprimitive, so  $b_\Delta(H)=2$ by  Theorem~\ref{thm:QP}. Thus the result follows from   Theorem~\ref{thm:expgeneral}.
\end{proof}

\section{Examples}
\label{s:exp}

The following is~\cite[Example~4.8]{Bad1993}, which demonstrates (in particular) that for any finite non-abelian simple group $T$, there are primitive permutation groups of twisted wreath type with socle $T^k$  for infinitely many $k$, and for all of these  groups, the top group is quasiprimitive. 

\begin{example}
\label{exp:diag}
Let $T$ be a finite non-abelian simple group, and let $\ell\geq 2$. Let $P$ be a quasiprimitive permutation group of diagonal type with socle $A:= T^\ell$, where $A$ is the unique minimal normal subgroup of $P$. (These are precisely the  groups of type III(a)(i) in Table~\ref{tab:ONS}.)
We may view $P$ as a subgroup of $S_k$ where $k:=|T|^{\ell-1}$, in which case $Q:=P_1$ is isomorphic to a subgroup of $\Aut(T)\times S_k$, and $T\simeq A\cap Q\unlhd Q$. For each $x\in Q$, we may define $\varphi(x)\in \Aut(A\cap Q)$ to be conjugation by $x$,  and since $\core_P(Q)=1$, it follows that 
$G:=T\twr_\varphi P$ is a quasiprimitive permutation group. 
In fact, $G$ is primitive by Theorem~\ref{thm:twmax}  (see~\cite[Example 4.8 and Lemma~4.4]{Bad1993} for details). Note that $P$ is primitive on $\bm{k}$ if and only if the permutation group $P^{\bm{\ell}}$ induced by the action of $P$ on the $\ell$  simple factors of $A$ is primitive. Also, for any transitive $R\leq S_\ell$, we can choose $P$ so that $P^{\bm{\ell}}=R$. Thus the top group of $G$ can be either primitive, or  quasiprimitive and  not primitive. 
\end{example}

Using Example~\ref{exp:diag}, we construct a large family of primitive permutation groups of twisted wreath type with base size $2$ whose top groups are not quasiprimitive. 

\begin{example}
\label{exp:topnotQP}
Let $T$ be any finite non-abelian simple group, and let $\ell\geq 2$. Let $m:=|T|^{\ell-1}$.  
By Example~\ref{exp:diag}, there is a primitive permutation group $H$  of twisted wreath type with socle $T^m$ and top group $S$ where $S$ is quasiprimitive on $\bm{m}$  with socle $T^\ell$. Let $K$ be any transitive permutation group of degree $r\geq 2$ such that    either $r\leq 2^m$, or $K$ is soluble or  semiprimitive (but not $S_r$ or $A_r$). Then $G:=H\wr K$ is a primitive permutation group of twisted wreath type with socle $T^{mr}$ by Lemma~\ref{lemma:QPprod}. The top group of $G$ is $P:=S\wr K$ in its imprimitive action on $\bm{m}\times\bm{r}$, and $P$ is not quasiprimitive since $S^r$ fixes the $r$ blocks of imprimitivity.
By Theorem~\ref{thm:summary}, $d_{\bm{m}}(S)=2$ and $d_{\bm{r}}(K)\leq 2^m$, so by  Lemma~\ref{lemma:distblock}, $d_{\bm{m}\times\bm{r}}(P)\leq 4$. 
As we saw in the proof of Theorem~\ref{thm:QPgeneral}, $\omega_T(\Aut(T))\geq 4$. Thus $b(G)=2$ by  Lemma~\ref{lemma:hboundplus}.
\end{example}

Next we show that for any finite non-abelian simple group $T$, there is a primitive permutation group $G$ of twisted wreath type with socle $T^k$ for some $k$ such that the top group $P$ of $G$ is primitive and  almost simple. In Example~\ref{exp:AS}, we construct $G$ for an almost simple group $P$ with certain properties, and then in Lemma~\ref{lemma:AS}, we show that some $P$ with these properties  can be constructed for every $T$.

\begin{example}
\label{exp:AS}
Let $P$ be an almost simple primitive permutation group on $\bm{k}$  such that $Q:=P_1$  is almost simple with socle $T$. For each $x\in Q$,  define $\varphi(x)\in \Aut(T)$ to be conjugation by $x$. Since $\core_P(Q)=1$, it follows that 
$G:=T\twr_\varphi P$ is a quasiprimitive permutation group. To show that $G$ is primitive, it suffices to show that conditions (i)--(iii) of  Theorem~\ref{thm:twmax} hold with $M=P$. Let $U:=\ker(\varphi)=1$ and $V:=\varphi^{-1}(\Inn(T))=T$. Clearly (i) holds. Since $P$ is a primitive permutation group, $Q=N_P(T)$, so (ii) holds. 
For (iii), let $R$ be a non-trivial subgroup of $P$ that is normalised by $Q$ for which $R\cap T=1$. Then $R\cap Q=1$ and $R\unlhd P$, so $R$ is a non-trivial regular normal subgroup of $P$, contradicting the O'Nan-Scott theorem.  Thus (iii) holds, so $G$ is primitive by  Theorem~\ref{thm:twmax}.
To see that such a group $P$ exists for any non-abelian simple group $T$,  see Lemma~\ref{lemma:AS}.
\end{example}

\begin{lemma}
\label{lemma:AS}
Let $T$ be a non-abelian simple group. Then there is an almost simple primitive permutation group  whose point stabiliser is  almost simple with socle $T$.
\end{lemma}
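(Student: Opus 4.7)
My plan is to realise $P$ as a symmetric or alternating group $S_n$ (or $A_n$) acting on the cosets of the normaliser of a faithful primitively embedded copy of $T$. First I would choose a maximal subgroup $L$ of $T$, which exists since $T$ is a finite non-trivial group and which is automatically core-free since $T$ is simple. Setting $n := [T:L]$, the coset action gives a faithful primitive embedding $T \leq S_n$, and $n \geq 5$ since a non-abelian simple group has no proper subgroup of index at most~$4$. As $T$ has no subgroup of index $2$, this embedding in fact lands in $A_n$.

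Next I would analyse the normaliser $N := N_{S_n}(T)$. Since $T$ acts primitively but not regularly on $\bm{n}$ (because the maximal subgroup $L$ is non-trivial and satisfies $N_T(L)=L$ by simplicity) and has trivial centre, the centraliser $C_{S_n}(T)$ is trivial. Hence conjugation yields an injection $N \hookrightarrow \Aut(T)$ whose image contains $\Inn(T) \cong T$, so $N$ is almost simple with socle $T$; likewise $\tilde{Q} := N \cap A_n$ is almost simple with socle $T$.

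The remaining step, and also the main obstacle, is to arrange that $\tilde{Q}$ is maximal in $A_n$, or that $N$ is maximal in $S_n$. Once this holds, taking $P$ to be $A_n$ (respectively $S_n$) acting on the cosets of $\tilde{Q}$ (respectively $N$) yields an almost simple primitive permutation group whose point stabiliser is almost simple with socle $T$, as required. For a given choice of $L$ maximality can fail because $\tilde{Q}$ could sit inside a strictly larger primitive subgroup of $A_n$ of affine, diagonal, product, almost simple, or twisted wreath type; however, by the O'Nan-Scott theorem together with the Liebeck-Praeger-Saxl description of maximal subgroups of alternating and symmetric groups, only a small and explicit collection of primitive faithful actions of $T$ can arise in such exceptional configurations. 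Varying $L$ among the conjugacy classes of maximal subgroups of $T$ then produces a primitive faithful action of $T$ which avoids every such exceptional configuration, supplying the required $P$ and $Q$. Verifying this uniformly across all non-abelian simple groups $T$ ultimately relies on a case-by-case consultation of the known lists of maximal subgroups of alternating groups, and this is the technical heart of the argument.
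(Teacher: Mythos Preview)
Your setup matches the paper's: embed $T$ primitively in $S_n$ via a maximal subgroup $L$, and observe that $N := N_{S_n}(T)$ is almost simple with socle $T$. The divergence, and the gap, comes at the next step. You insist that $N$ (or $N\cap A_n$) be \emph{maximal} in $S_n$ (or $A_n$), and propose to secure this by varying $L$; but the claim that some choice of $L$ always avoids every exceptional overgroup configuration is asserted, not proved. Carrying it out would mean, for each finite simple $T$, exhibiting a primitive action that is absent from the full Liebeck--Praeger--Saxl tables of non-maximal primitive embeddings in $A_n$ and $S_n$---the case analysis you call ``the technical heart'' but do not supply.

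The paper sidesteps this with one extra idea: the almost simple overgroup need not be $A_n$ or $S_n$. After disposing of $T\cong A_m$ via the pair $(S_m,S_{m+1})$, it fixes a single $L$ and simply takes any $K$ with $N< K\leq NA_n$ in which $N$ is maximal. If $K$ is almost simple its socle cannot be $T$ (since $N$ is already the full normaliser of $T$ in $S_n$), so $(N,K)$ is the required pair. If $K$ is primitive but not almost simple, the Liebeck--Praeger--Saxl classification of primitive inclusions forces $T\in\{\PSL_2(7),\ M_{12},\ \Sp_4(q)\ (q\text{ even})\}$, and an explicit pair is written down in each of these three cases. Allowing intermediate almost simple overgroups thus collapses your open-ended search over $L$ to a three-line residual check.
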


\begin{proof}
We claim that there is an almost simple group $G$ with socle $T$ and an almost simple group $H$ with socle $S$ such that $G$ is maximal in $H$ and $S\neq T$. If so, then $G$ is core-free in $H$, so $H$ is a primitive permutation group  on the set of cosets of $G$, and we are done. If $T=A_m$, then we may take  $(G,H)=(S_m,S_{m+1})$, so we may assume that $T\not\simeq A_m$ for $m\geq 5$. 
Choose a maximal subgroup $M$ of $T$,  and let $n:=[T:M]$.  Then we may view $T$ as  a primitive subgroup of $S_n$, and $T$ is not regular. Let $N:=N_{S_n}(T)$. Then $N$ is almost simple with socle $T$, and  $NA_n$ has socle $A_n$.
If $N$ is maximal in $NA_n$, then  we may take $(G,H)=(N,NA_n)$. Otherwise, there is a group $K$ such that $N<K<NA_n$ and $N$ is maximal in $K$. If $K$ is almost simple with socle $S$, then $S\neq T$, so we may take $(G,H)=(N,K)$. 
Otherwise, $K$ is a primitive subgroup of $S_n$ that is not almost simple. By~\cite{LiePraSax1987}, $T$ is   $\PSL_2(7)$, $M_{12}$ or $\Sp_4(q)$ where $q$ is even. By~\cite{LiePraSax1987}, we may take $(G,H)$ to be $(\PSL_2(7){:}2,S_8)$, $(M_{12},A_{12})$  or $(\Sp_4(q),\SL_4(q))$, respectively.
\end{proof}

We finish this section with two more examples and a remark.

\begin{example}
\label{exp:Sktop}
In Theorems~\ref{thm:QPgeneral} and~\ref{thm:primprobgeneral}, we proved that if  Hypothesis~\ref{hyp:tw} holds where  $P$ is $S_k$ or $A_k$ and $\Inn(T)\leq \varphi(Q)$, then $b_B(G)=2$ and the proportion of pairs of points from $B$ that are bases for $G$ tends to 1 as $|G|\to \infty$.
The assumption that $\Inn(T)\leq \varphi(Q)$ cannot be removed. For example, by Remark~\ref{remark:lotsQP},  quasiprimitive permutation groups $G$ of twisted wreath type with socle $A_5^k$ and top group $S_k$  exist for all $k\geq 2$, but   $b_B(G)\geq\log_{60}k$  by Theorem~\ref{thm:pyber}.
\end{example}

\begin{example}
\label{exp:SkwrSr}
Assume Hypothesis~\ref{hyp:tw} where $G$ is primitive on $B$, so $\core_P(Q)=1$ by Lemma~\ref{lemma:twmax}.  
Let $K$ be a transitive subgroup of $S_r$ where $r\geq 2$. By Lemma~\ref{lemma:QPprod}, $G\wr K$ is a primitive permutation group of twisted wreath type on $B^r$. 
Moreover, the top group of $G\wr K$ is $P\wr K$ in its imprimitive action on $\bm{k}\times \bm{r}$. In particular, by Example~\ref{exp:AS}, we may take $T=A_{k-1}$, $P=S_k$ and $K=S_r$ where $k\geq 6$, in which case $G_{k,r}:=G\wr K$ has top group $S_k\wr S_r$. 
By Theorem~\ref{thm:exp}, 
$$
b(G_{k,r})=\left\lceil \frac{\log r}{k\log|A_{k-1}|}\right\rceil+\delta_{k,r}
$$
for some $\delta_{k,r}\in \{1,2\}$. 
In particular,  $b(G_{k,r})$ can be arbitrarily large, but also very small. For example, $b(G_{k,r})\in \{2,3\}$ for all $r\leq ((k-1)!/2)^k$.  
In fact, $b(G_{k,k})=2$ for $k\geq 9$ by  
Lemma~\ref{lemma:hboundplus} since    
$d_{\bm{k}\times\bm{k}}(S_k\wr S_k)=k+1$ by~\cite[Theorem~2.3]{Cha2006} (see~\S\ref{s:dist}), and by the proof of Lemma~\ref{lemma:QPSkAk}, $k+1\leq \omega_{A_{k-1}}(S_{k-1})$ for $k\geq 9$. 
Note that by~\cite[Proposition~5.5.1]{FawThesis}, if $X$ is a primitive permutation group of twisted wreath type with top group $S_k\wr S_r$ acting on  $\bm{k}\times\bm{r}$, then $X$ is permutation isomorphic to $G_{k,r}$. 
\end{example}

\begin{remark}
\label{remark:notsuff}
In the proof of Theorem~\ref{thm:QPgeneral} and Examples~\ref{exp:topnotQP} and~\ref{exp:SkwrSr}, we used 
 the  following consequence of  Lemma~\ref{lemma:hbound} (cf. Lemma~\ref{lemma:hboundplus}): 
if Hypothesis~\ref{hyp:tw} holds where $\core_P(Q)=1$ and $d_{\bm{k}}(P)\leq \omega_T(\varphi(Q))$, then $b_B(G)=2$. However,   
 the converse does not hold. 
By Example~\ref{exp:AS}, there is a primitive group $G$ of twisted wreath type with socle $A_{k-1}^k$ and top group $S_k$ where $k\geq 6$. Here Hypothesis~\ref{hyp:tw} holds with $P=S_k$, $T=A_{k-1}$ and  $\varphi(Q)=S_{k-1}$.  Now   $b_B(G)=2$ by Lemma~\ref{lemma:QPSkAk}, but if $k=6$ or $7$, then $\omega_T(\varphi(Q))=4$ or $6$, respectively, so $\omega_T(\varphi(Q))<k=d_{\bm{k}}(P)$.
\end{remark}

\bibliographystyle{acm}
\bibliography{jbf_references}
\end{document}